\def\tf{\tilde{f}}   
\def\calT{{\cal T}}  
\def\tfe{g}
\def\Na{\mathbb{N}}  
\def\Re{\mathbb{R}}  
\def\Ze{\mathbb{Z}}  
\def\ds{\displaystyle}
\def\BE{\begin{equation}}
\def\EE{\end{equation}}
\def\DT{\Delta t}
\def\DX{\Delta x}
\def\Dt{\partial_t}
\def\Dx{\partial_x}
\def\Dxx{\partial_{xx}}
\def\vp{\varphi}
\def\calV{\mathcal{V}}
\def\calW{\mathcal{W}}
\def\sign{{\rm sgn}}
\newcommand{\lbeq}[1]{{\label{OR:eq:#1}}}
\newcommand{\be}[1]{\begin{equation} \lbeq{#1}}
\newcommand{\ee}{\end{equation}}
\newtheorem{rmk}{Remark}
\def\V{{\mathbf V}}
\def\O{{\mathbf \Omega}}
\def\EL{{\mathcal L}}
\newcommand{\chS}{M}   
\newcommand{\chN}{N}
\newcommand{\nG}{K}    
\newcommand{\nGG}{\mathcal{K}}
\def\lsum{{\ell\in\nGG}}    
\def\ksum{{k \in\nGG}}    
\title{A well-balanced scheme for chemotactic travelling waves at the mesoscopic scale}
\author{Vincent Calvez\footnote{CNRS \& Institut Camille Jordan, Universit\'e de Lyon 1, and Inria, project-team NUMED, Lyon, France, \tt{vincent.calvez@math.cnrs.fr}}
\and
Laurent Gosse\footnote{IAC, CNR, via dei Taurini, 19, 00185 Roma (Italia), 
\tt{l.gosse@ba.iac.cnr.it}}
\and
Monika Twarogowska\footnote{ Unit\'e de Math\'ematiques Pures et Appliqu\'ees, Ecole Normale Sup\'erieure de Lyon, and Inria, project-team NUMED, Lyon, France,  
\tt{monika.twarogowska@ens-lyon.fr}}
}
\begin{document}

\maketitle

\begin{abstract}
We investigate numerically a model consisting in a kinetic equation for the biased motion of bacteria following a run-and-tumble process, coupled with two reaction-diffusion equations for chemical signals. This model exhibits asymptotic propagation at a constant speed. In particular, it admits travelling wave solutions. To capture this propagation, we propose a well-balanced numerical scheme based on Case's elementary solutions for the kinetic equation, and $\mathcal{L}$-splines for the parabolic equations. We use this scheme to explore the Cauchy problem for various parameters. Some examples far from the diffusive regime lead to the co-existence of two waves travelling at different speeds. Numerical tests support the hypothesis that they are both locally asymptotically stable.  Interestingly, the exploration of the bifurcation diagram raises counter-intuitive features.
\end{abstract}

\begin{keywords}
Chemotaxis; Kinetic equations;  Run-and-tumble model; Solitary wave; Exponential layers; Well-balanced scheme. 
\end{keywords}

\begin{AMS}
35Q92, 65M06, 92C37, 92C45.
\end{AMS}

\pagestyle{myheadings} \thispagestyle{plain} \markboth{V. Calvez, L. Gosse \& M. Twarogowska}{Numerical capture of chemotactic solitary waves}

\section{Introduction}

\subsection{Concentration waves of chemotactic bacteria}

This work deals with numerical simulation of bacteria collective motion at the mesoscopic scale. In particular, we focus on wave propagation in the  long time asymptotics  (see Fig. \ref{fig:wave_introduction}).  
\begin{figure}[ht]
  \begin{center}
    \subfigure[]{
      \includegraphics[scale=0.45]{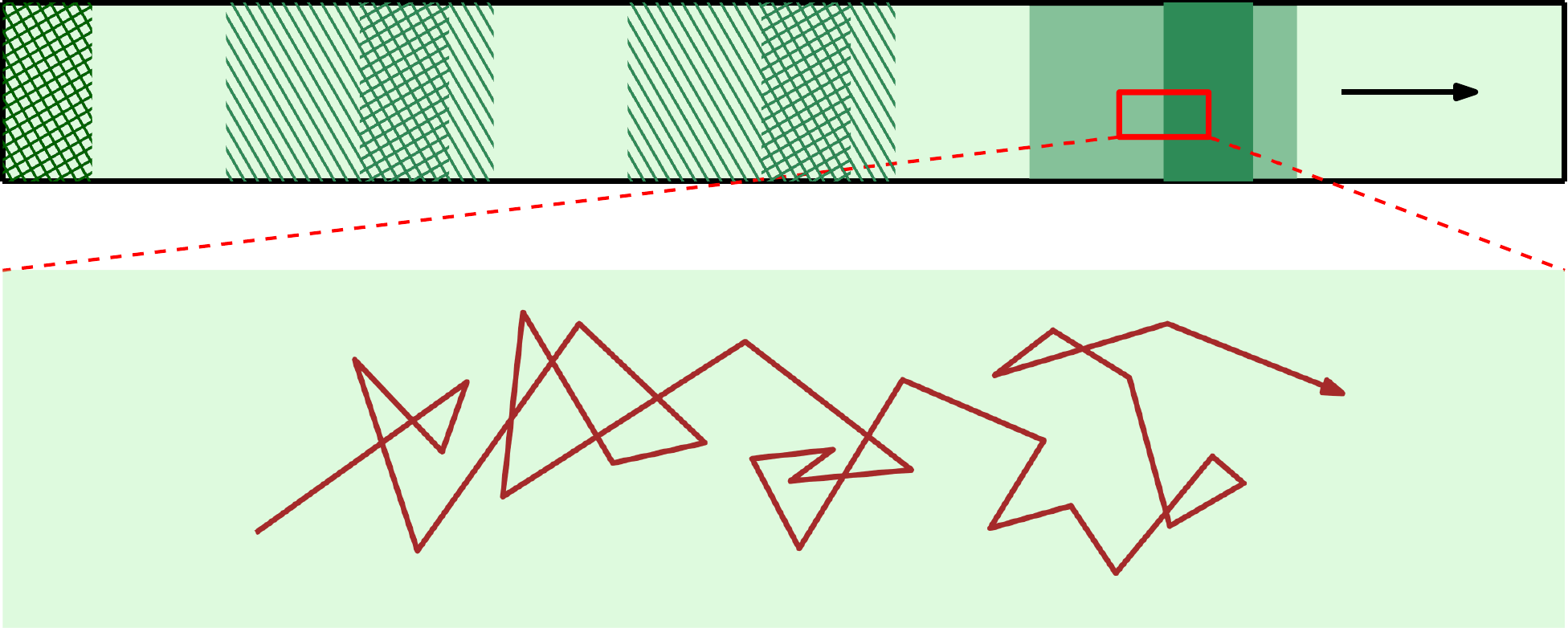} 
    }
    \subfigure[]{
      \begin{tabular}{cc}
        \includegraphics[scale=0.11]{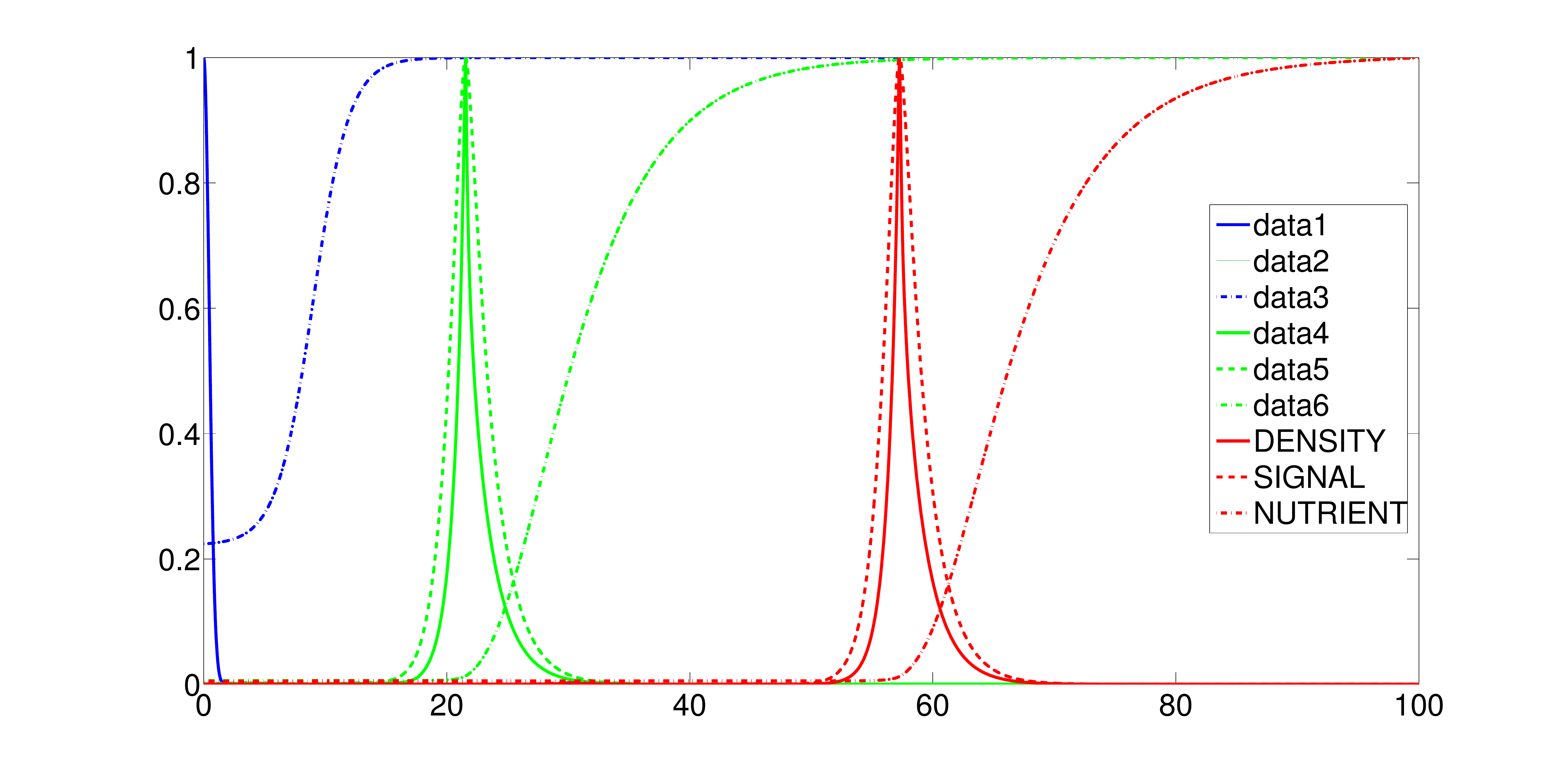}&\includegraphics[scale=0.11]{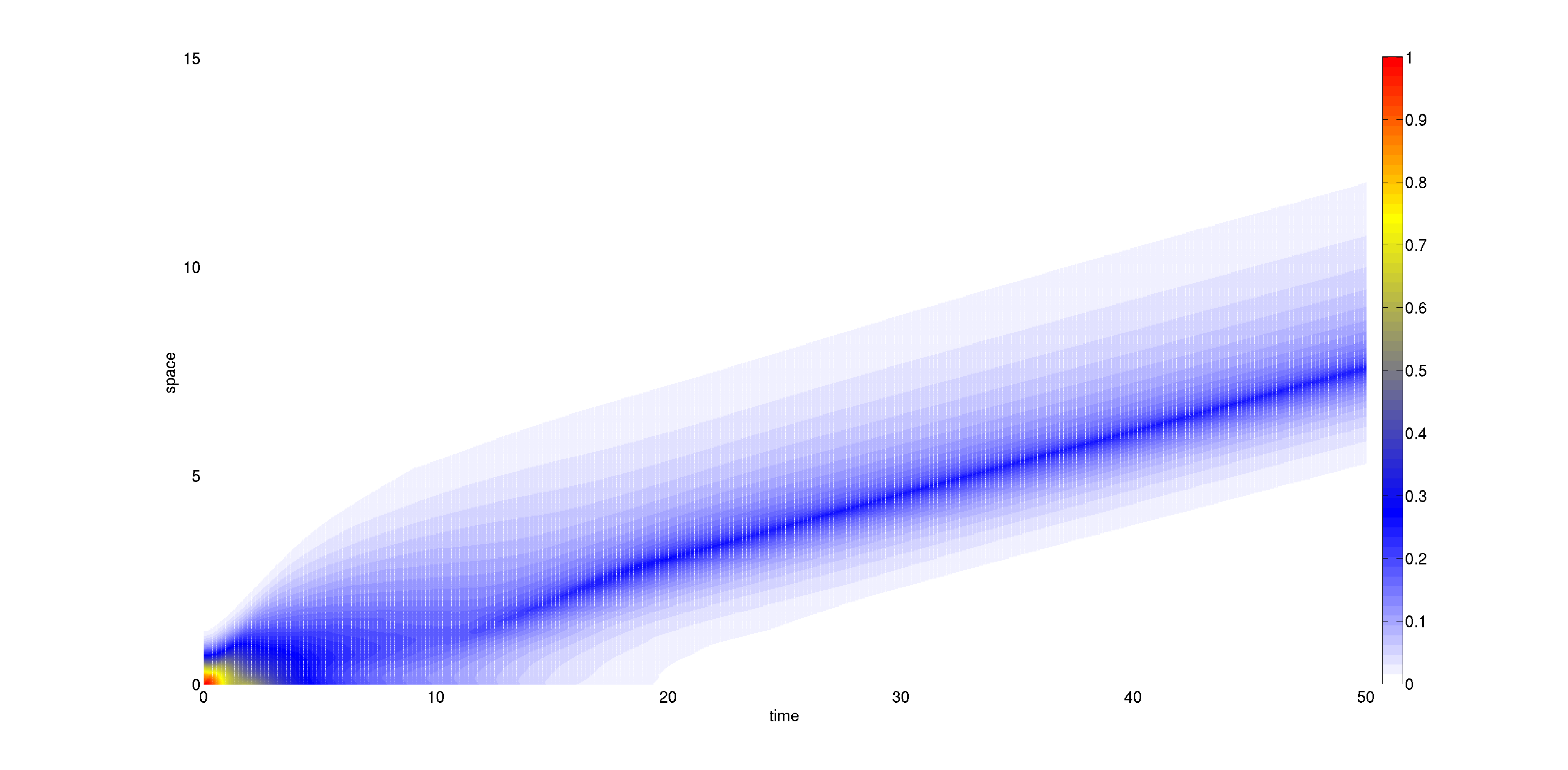}\\
      \end{tabular}
    }
  \end{center}
  \caption{(a) Cartoon of concentration waves of bacteria, as reported in \cite{adler_chemotaxis_1966,saragosti_directional_2011}: The population of bacteria is initially located on the left hand side of the channel after centrifugation. Shortly, a large fraction of the population detaches and propagate to the right side at constant speed. Individual trajectories follows a run-and-tumble process in first approximation: cells alternate between straight runs and fast reorientation events (tumbles). The duration of run phases is modulated by sensing temporal variations of the chemical gradients in the environment. We refer to \cite{Berg_book} for biological aspects of motions of {\em E. coli}. (b) Numerical simulation of a travelling wave emerging from system \eqref{eq:model}. (Left) Spatial distribution of density of cells and concentration of the signal and the nutrient. (Right) Density of a wave as a function of time and space.}
  \label{fig:wave_introduction}
\end{figure}
Bacteria perform run-and-tumble motion in a liquid medium \cite{berg_chemotaxis_1972,macnab_gradient-sensing_1972,Berg_book}. They alternate between run phases of ballistic motion and tumble phases of rotational diffusion. It is often assumed that tumbles are instantaneous reorientation events, where the cell changes velocity (from $v'$ to $v$, say). On the other hand, the duration of runs is modulated by temporal-sensing chemotaxis (chemokinesis). Accordingly, cells spend more time (in average) in favorable directions, for which the concentration of some molecular signal is increasing. This strategy allows them to navigate in heterogeneous environments. 

Remarkably, self-organized collective motion can emerge from this individual process. Here, we focus on concentration waves of bacteria {\em E. coli} in a capillary assay (or a micro-channel), as described in the seminal article by  Adler \cite{adler_chemotaxis_1966}, see \cite{tindall} for a review from the modelling perspective. The following 
model was
proposed in  \cite{saragosti_mathematical_2010,saragosti_directional_2011}, together with its validation on tracking experimental data: 
\begin{subequations}\label{eq:model}
\BE\label{kinetic}
\Dt f(t,x,v) + v\Dx f(t,x,v) = \int_V \mathcal{T}(t,x,v')f(t,x,v')  d\nu(v')- \mathcal{T}(t,x,v)f(t,x,v) .
\EE
The bacteria population is described by its density $f(t,x,v)$ in the position$\times$velocity space at time $t>0$. Here, we restrict to the one dimensional case $(x,v)\in \Re\times V$, as we seek planar travelling waves in the original three dimensional setting. Here, $V$ is the compact interval of admissible velocities, and $\nu$ is a symmetrical probability measure on this interval. 
The tumbling rate $\mathcal{T}(t,x,v)$ depends on time, space and velocity through several molecular signals, called  chemoattractants. 
Following \cite{salman_solitary_2006,xue_travelling_2010,saragosti_mathematical_2010,saragosti_directional_2011}, we make the hypothesis of two chemical species: an amino-acid signal $M$ released by the bacteria ({\em e.g.} aspartate, serine) and a nutrient $\chN$ consumed by the bacteria ({\em e.g.} oxygen, glucose). Assuming that both signals contribute additively to the tumbling rate $\mathcal{T}$, we assume that it is given by the following expression:  
\BE\label{tumbling}
\mathcal{T}(t,x,v)=1+\chi_{\chS} \cdot \phi\left(\left.\frac{D{\chS}}{Dt}\right|_{v}\right) + \chi_N\cdot \phi\left(\left.\frac{DN}{Dt}\right|_{v}\right), \qquad \phi(\cdot)=-\sign(\cdot),
\EE
where $\frac{D }{Dt}$ stands for the {\it material derivative} along the direction given by the velocity~$v$, {\it i.e.} $\frac{D }{Dt} = \Dt   + v \Dx  $.
The sign function has been chosen for at least three reasons: (i) There exists experimental evidence that bacteria can dramatically amplify small amplitudes during temporal sensing \cite{Berg_book, othmer_models_1988}. This motivates the choice of non linear functions such as the sign function. But see \cite{saragosti_directional_2011} where a more appropriate choice of sigmoidal function was proposed. (ii) Existence of travelling waves solution for this conservative problem rely on the specific choice of the sign function in \eqref{tumbling}. It would be highly relevant to replace it with $\phi\left(\left.\frac{D{\log \chS}}{Dt}\right|_{v'}\right)$, as suggested in \cite{tu_modeling_2008,kalinin_logarithmic_2009,zhu_frequency-dependent_2012,perthame_derivation_2015}, but the mathematical picture seems by far more complicated. (iii) It is a numerical challenge to cope with the lack of regularity of the sign function, and resulting consequences on the lack of regularity of density profiles.  

Chemoattractant concentrations evolve according to standard reaction-diffusion equations, with production, and uptake reaction terms, respectively: 
\BE\label{para-S-N}
\begin{cases}
\Dt \chS - D_{\chS} \Dxx \chS + \alpha \chS= \beta \rho,\medskip\\  \Dt N - D_N \Dxx N +\gamma \rho N= 0,
\end{cases}
\EE
\end{subequations}
where $D_{\chS}, D_{\chN}, \alpha,\beta,\gamma$ are positive constants, denoting respectively the diffusion coefficient of $\chS$, the diffusion coefficient of $\chN$, the rate of degradation of $\chS$, the rate of production of $\chS$ by the bacteria, and the rate of consumption of the nutrient $\chN$ by the bacteria. Also, $\rho$ denotes the spatial density of cells: 
\begin{displaymath}
\rho(t,x)=\int_V f(t,x,v)d\nu(v).
\end{displaymath}

Kinetic modeling of bacteria motion dates back to Stroock \cite{stroock_stochastic_1974} and Alt  \cite{alt_biased_1980}. We refer to \cite{othmer_models_1988,erban_signal_2005,dolak_kinetic_2005,chalub_model_2006,xue_macroscopic_2013,perthame_derivation_2015} for the description of the run-and-tumble process at multiple scales. In particular, \cite{xue_travelling_2010} and \cite{franz_travelling_2013} deals with the modelling of the same experiment with a similar model including an additional variable (the internal state of the bacteria). Also,  \cite{almeida_existence_2014,emako_traveling_2016} is concerned with the modelling of interactions between two strains within the same wave of propagation.
 
Kinetic models have been the basis for the derivation of macroscopic models of cell chemotaxis \cite{othmer_models_1988, Othmer_Hillen2,chalub_kinetic_2004,dolak_kinetic_2005,stevens_drift-diffusion_2005, erban_signal_2005,hwang_global_2006,xue_macroscopic_2013,Perthame}. Mathematical analysis of kinetic models for chemotaxis was performed in \cite{chalub_kinetic_2004,hwang_global_2005,hwang_global_2006,bournaveas_global_2008,bournaveas_global_2008-1} from the perspective of global existence and regularity of solutions. 
Numerical analysis of kinetic models for chemotaxis was performed in \cite{filbet_numerical_2014,Gosse_chemo,Gosse_Vauchelet}. In \cite{Yasuda}, the author proposed a Monte Carlo algorithm to simulate \eqref{eq:model} with the aim to resolve travelling waves. 

The constructions of travelling waves for system \eqref{eq:model} was investigated in \cite{calvez_existence}. It was proved that travelling wave solutions exist under certain conditions on the parameters. Furthermore, some careful analysis revealed that such waves are not unique in general, contrary to the macroscopic model obtained in the diffusion limit.

The main objectives of the present work are twofold: (i) We propose an efficient numerical scheme to approximate system \eqref{eq:model}, which is able to capture the waves over long period of time, despite their lack of regularity (ii) We explore some cases where several travelling waves co-exist, and investigate their stability, from a numerical perspective.

\subsection{Numerical simulations of travelling waves}

Numerical approximation of problem \eqref{eq:model} is delicate because it requires an algorithm able to accurately reproduce attraction toward waves of constant velocity on large domains, along with reliable large-time behavior, non-linear coupling, and material derivatives representing real biological behavior. To be more precise, consider  the following points: 
\begin{itemize}
\item In order to preserve shape and speed of a travelling wave over large domains a numerical scheme has to balance correctly the transport and the tumbling operator. We propose a well-balanced approximation of the kinetic equation in the framework of scattering matrices. Their construction is based on the generalized Case's solutions for the stationary problem of \eqref{kinetic} which allows to preserve constant velocity profiles. The well-balanced technique reduces also the time-growth of numerical errors \cite{Amadori_Gosse}, which is extremely important due to the time scales of the problem.  
\item The coupling between the density $f$ and the concentrations $(M,N)$ plays a crucial role in maintaining the right direction of the propagation. Well-balanced discretizations for linear diffusive equations including lower-order terms were recently introduced in \cite{Gosse_Lsplines} and we apply these techniques to the parabolic part of the model \eqref{eq:model} for a better resolution of the time evolution of the concentrations $\chS,\chN$. However, this method gives also a more accurate coupling with the kinetic equation through the tumbling operator, because the space grids of the kinetic part and of the parabolic part are naturally tilted in the appropriate way. 
\item  Material derivatives in the tumbling operator  account for the temporal variation of the concentrations along bacteria trajectories. Our first naive trial was not coherent with the underlying process. We realized that a basic upwind of this transport operator behaves in a nice way. 
\end{itemize}
The proposed numerical scheme is compared with more classical time-splitting techniques. In particular, the resolution of the velocity profile and computation of the wave speed is verified and the advantage of the well-balanced approach is shown.

A source of global error in our simulations turns out to be the dissociation between kinetic and parabolic 
time steps.
This proceeds by stipulating that either material derivatives in \eqref{kinetic}, or the macroscopic density $\rho$ in \eqref{para-S-N}, are kept constant during each time step. Such a splitting assumption appears to be reasonable because \eqref{eq:model} is only a weakly nonlinear system.
\begin{rmk}\label{rem-zero-TW}
In spite of the weakness of the mean-field coupling, and the fact that strictly parabolic equations like (\ref{para-S-N}) are likely to react ``slowly'' to perturbations of $f$, the nature of the waves that we aim at capturing numerically puts such a ``splitting strategy'' in jeopardy. Indeed, chemotactic waves travelling at constant velocity result from subtle balancing involving all the equations in system (\ref{eq:model})-(\ref{tumbling})-(\ref{para-S-N}). Yet, as soon as a numerical algorithm proceeds by solving (\ref{eq:model})-(\ref{tumbling}) independently of (\ref{para-S-N}), the resulting kinetic equation only perceives $x$-dependent coefficients: in a bounded domain, such an equation does not admit travelling waves solutions, except the one with zero velocity.
\end{rmk}

\subsection{Organization of the paper}

This paper is organized as follows: Section~\ref{sec:theory} contains a description of theoretical results about the existence of such travelling waves. Then, Section~\ref{sec:scheme} is devoted to a detailed description of the components involved in our numerical approximation of model (\ref{eq:model})-(\ref{tumbling})-(\ref{para-S-N}). In particular, the treatment of (\ref{eq:model}) proceeds by applying techniques relying on Case's elementary solutions (see section 3.2), and the one handling parabolic equations (\ref{para-S-N}) relies on $\mathcal L$-splines discretization (see section 3.3). Accordingly, sections~\ref{sec:sim1} and \ref{sec:sim2} display numerical results of increasing complexity.

\section{Existence theory of chemotactic solitary waves}\label{sec:theory}

We summarize below the result obtained in \cite{calvez_existence} concerning the existence of travelling waves. Firstly, the problem is recast in the moving frame variable $z = x-ct$, where $c$ denotes the wave speed, which is the main unknown in our problem. It writes
\begin{equation}
\begin{cases}
  \displaystyle   (v-c) \partial_z f(z,v) =  \int  \calT(z,v'-c)f(z,v') \, d\nu(v') - \calT(z,v-c)f(z,v)
\medskip\\
 - c\partial_z \chS(z)  -  D_{\chS} \partial_{zz} \chS(z)  + \alpha \chS(z) = \beta \rho(z) \medskip\\
- c\partial_z \chN(z)  - D_{\chN} \partial_{z}^2 \chN(z) + \gamma \rho(z) \chN(z) = 0
\end{cases},\label{eq:TW}
\end{equation}
where the tumbling rate $\calT$ can take only four possible values depending on the sign of the gradients,
\begin{eqnarray}
\calT(z,v'-c) &=&  1 - \chi_{\chS} \sign((v'-c) \partial_z \chS(z)) - \chi_{\chN}\sign((v'-c) \partial_z \chN(z)) \nonumber \\
&=&  1 \pm \chi_{\chN} \pm \chi_{\chN}\, . \label{eq:T}
\end{eqnarray}
Under some restriction on the reaction-diffusion parameters $(D_{\chS},\alpha)$, there exist $c$, and functions $(f,\chS,\chN)$ solutions of the travelling wave problem \eqref{eq:TW}, see the precise statement in Theorem \ref{theo:kin TW} below. The conditions which are imposed on the parameters to guarantee existence of a travelling wave solution are linked to the asymptotic behavior of the solution of the linear stationary problem
\begin{equation}\label{eq:decoupled}
\displaystyle   (v-c) \partial_z f(z,v) =  \int   T(z,v'-c)f(z,v') \, d\nu(v') - T(z,v-c)f(z,v),
\end{equation}
for a given $c$, in a given field of chemical concentrations $\chS(z),\chN(z)$ which satisfy the following sign rules:
\begin{equation}
\label{eq:ansatz}
\begin{cases}
 \text{$(\forall z<0)\;   \partial_z \chS(z)>0$, and $(\forall z>0)\; \partial_z \chS(z)<0$}\,, \medskip\\
\text{$(\forall z)\;  \partial_z \chN(z)>0$}.
\end{cases}
 \end{equation}
The tumbling rate $T$ associated with such given concentrations $\chS(z),\chN(z)$ can take only four possible values $ 1 \pm \chi_{\chS} \pm \chi_{\chN} $, according to the rule of signs depicted in Figure \ref{fig:sign convention}. We introduce the notation
$$
T_+ = 1 + (\chi_{\chS}-\chi_{\chN})\sign(v),\qquad
T_+ = 1 - (\chi_{\chS}+\chi_{\chN})\sign(v).
$$
Under mild conditions on the measure $\nu$ (essentially bounded below by a positive constant on its support),  the density $f(z,v)$ decays exponentially fast on both sides of the origin $z=0$. There exist positive exponents $\lambda_-, \lambda_+$, and velocity distributions $F_-, F_+$ such that
\begin{equation}\label{eq:asympt}
f(z,v) \underset{z\to -\infty}\sim e^{\lambda_-  z } F_-(v)\, , \quad f(z,v) \underset{z\to +\infty}\sim e^{-\lambda_+  z } F_+(v)\, .
\end{equation}
The meaning of the equivalence in \eqref{eq:asympt} is made precise in \cite{calvez_confinement,calvez_existence}, in terms of some $L^2$ weighted space.
The pairs $(\lambda_-,F_-)$ and $(\lambda_+,F_+)$ are given by the expressions,
\begin{multline}\label{eq:lambda-}
\bullet \, F_-(v) = \dfrac{1}{T_-(v-c) + \lambda_-  (v-c)}
\, , \\ \text{where $\lambda_-$ is the smallest positive root of }
\int \dfrac{v-c}{T_-(v) + \lambda (v-c)}\, dv  = 0\, ,
\end{multline}
and
\begin{multline}\label{eq:lambda+}
\bullet \, F_+(v) = \dfrac{1}{T_+(v-c) - \lambda_+  (v-c)}\, , \\ \text{where $\lambda_+ $ is the smallest positive root of } \int \dfrac{v-c}{T_+(v-c) - \lambda(v-c)}\, dv = 0 \, .
\end{multline}
On the one hand, the existence of a positive root $\lambda_+$ in the latter equation \eqref{eq:lambda+} is guaranteed if $c$ is larger than $c_*$, where $c_*$ is defined as the unique velocity such that
\begin{equation}
\int \dfrac{v-c_*}{T_+(v-c_*)}\, dv = 0\, .
\end{equation}
On the other hand, the existence of a positive root $\lambda_-$ in equation \eqref{eq:lambda-} is guaranteed provided $c$ is less than $c^*$, where $c^*$ is defined as the unique velocity such that
\begin{equation}
\int \dfrac{v-c^*}{T_-(v-c^*)}\, dv = 0\, .
\end{equation}
\begin{figure}[t]
\begin{center}
\includegraphics[width = .6\linewidth]{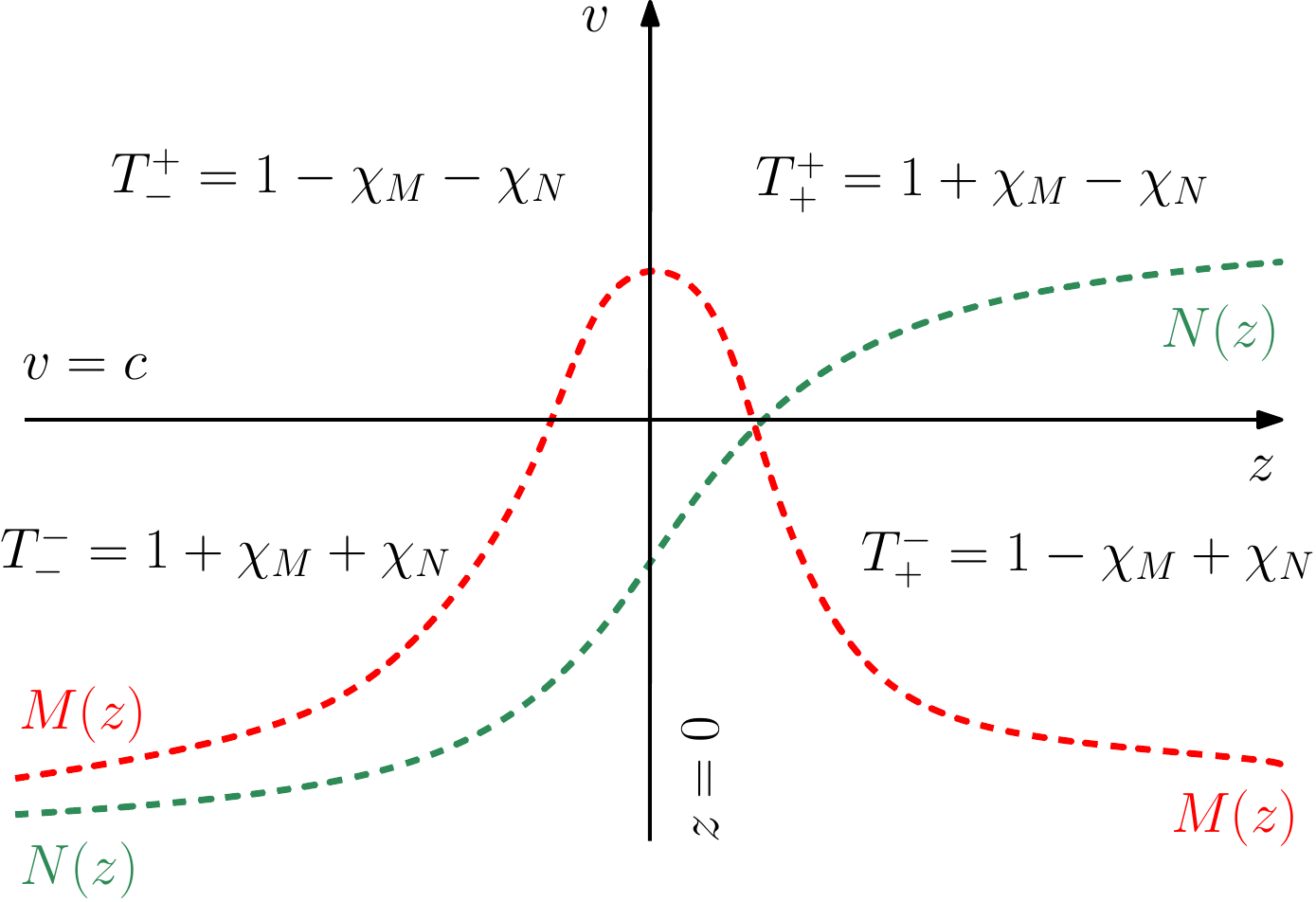}
\caption{Expression of the tumbling rate $T_\pm^\pm$ \eqref{eq:T} depending on the signs of $z$ and $v-c$. The expected profiles of $\chS$ and $\chN$ are plotted in dashed lines, in order to get the correct value of $T_\pm^\pm$ at a glance. Note the dependency with respect to $c$, as some signs change relatively to $v-c$ from bottom to top in the $(z,v)$ plane. }
\label{fig:sign convention}
\end{center}
\end{figure}
We now state precisely the conditions for existence of travelling wave solutions.
\begin{theorem}[\cite{calvez_existence}]
Assume $(\chi_{\chS}, \chi_{\chN})\in (0,1/2)\times[0,1/2)$. Assume that $\nu$ is absolutely continuous with respect to Lebesgue's measure: $d\nu(v) = \omega(v)dv$, where the p.d.f. $\omega$ belongs to $L^p$ for some $p>1$. Assume in addition that the reaction-diffusion parameters $\alpha, D_{\chS}$ satisfy the following conditions:
\begin{subequations}\label{eq:parameter conditions}
\begin{equation} \label{eq:cond 1} \dfrac{c_* + \sqrt{(c_*)^2 + 4\alpha D_{\chS}}}{c_* + \sqrt{(c_*)^2 + 4\alpha D_{\chS}} + 2D_{\chS} \lambda_-(c_*)} \leq \text{explicit constant}\,, \end{equation}
\begin{equation}  \label{eq:cond 2}
\dfrac{-c^* + \sqrt{(c^*)^2 + 4\alpha D_{\chS}}}{-c^* + \sqrt{(c^*)^2 + 4\alpha D_{\chS}} + 2D_{\chS} \lambda_+(c^*) }
\leq \text{explicit constant}\, ,
\end{equation}
\begin{equation}  \label{eq:cond 3} \text{Either $\chi_{\chN}\geq \chi_{\chS}$, or}\quad \dfrac{ \sqrt{\alpha/ D_{\chS}} +   \lambda_+(0)}{\sqrt{\alpha / D_{\chS}} +  \lambda_-(0)} \leq \text{explicit constant}\, . \end{equation}
\end{subequations}
Then, there exist a non-negative velocity $c$, and a set of nonnegative functions, 
$$
(f,\chS,\chN)\in \left( L^1\cap L^\infty (\Re\times V)\right)\times \mathcal C^2(\Re)\times \mathcal C^2(\Re),
$$
being solution of the travelling wave problem \eqref{eq:TW}-\eqref{eq:T}.
\label{theo:kin TW}
\end{theorem}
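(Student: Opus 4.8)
The plan is to treat the wave speed $c$ as a scalar unknown and to build, for each admissible $c$, a candidate profile whose internal consistency pins down $c$; existence then follows from a continuity argument. The starting point is the sign ansatz \eqref{eq:ansatz}, adopted as a working hypothesis to be verified a posteriori rather than assumed. Under it the tumbling rate ceases to depend on the magnitudes of $\chS$ and $\chN$ and reduces to the piecewise-constant-in-$z$ rate equal to $T_-(v-c)$ on $\{z<0\}$ and $T_+(v-c)$ on $\{z>0\}$, as read off from Figure~\ref{fig:sign convention}. This decouples the system: the kinetic equation \eqref{eq:decoupled} becomes a linear transport--scattering equation whose coefficients switch only at $z=0$, and $(\chS,\chN)$ enter solely through the location of that switch.

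First I would solve the two half-line kinetic problems by separation of variables (Case's elementary solutions). Seeking modes $f=e^{\mu z}F(v)$ yields the dispersion relations \eqref{eq:lambda-}--\eqref{eq:lambda+}; the leading decaying modes are $(\lambda_-,F_-)$ on the left and $(\lambda_+,F_+)$ on the right, and I would record that $\lambda_+$ exists precisely when $c>c_*$ and $\lambda_-$ precisely when $c<c^*$, so the natural parameter range is $c\in(c_*,c^*)$ (nonempty thanks to $\chi_{\chS},\chi_{\chN}\in(0,1/2)$). The hypothesis that $\nu$ has an $L^p$ density $\omega$ is what makes this spectral analysis rigorous: it guarantees convergence of the relevant integrals, well-posedness and positivity of $F_\pm$, and control of the continuous Case spectrum in the weighted $L^2$ framework of \cite{calvez_confinement}. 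I would then assemble the full-line solution by matching the left- and right-decaying families through the trace condition $f(0^-,v)=f(0^+,v)$ on incoming velocities; this transmission problem should have a solution unique up to a positive multiplicative constant, nonnegative, and with $f\in L^1\cap L^\infty$. Fixing total mass normalizes the scaling and yields $\rho_c>0$ increasing like $e^{\lambda_- z}$ from the left, decreasing like $e^{-\lambda_+ z}$ to the right, with a corner at $z=0$ inherited from the jump in $\calT$.

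Next I would solve the parabolic block for this $\rho_c$. The equation for $\chS$ is linear, so $\chS_c=\beta\,G_{\chS}\ast\rho_c$, where $G_{\chS}$ is the Green's function of $-c\partial_z-D_{\chS}\partial_{zz}+\alpha$, decaying with characteristic rates $\mu_\pm=(-c\pm\sqrt{c^2+4\alpha D_{\chS}})/(2D_{\chS})$; the equation for $\chN$, although coupled to $\rho_c$, is linear in $\chN$ with coefficient $\gamma\rho_c$, and I would solve it as a decay boundary-value problem with $\chN\to 0$ at $-\infty$ and $\chN\to\text{const}>0$ at $+\infty$. Both solutions are $C^2$ and positive. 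I would then verify \eqref{eq:ansatz}: monotonicity $\partial_z\chN_c>0$ should follow from a maximum-principle/comparison argument using $\rho_c>0$, while the single-peak property of $\chS_c$ requires examining the sign of $\partial_z\chS_c=\beta\,G_{\chS}'\ast\rho_c$. This is exactly where \eqref{eq:cond 1}--\eqref{eq:cond 2} enter: the ratios appearing there compare the Green's-function rates $|\mu_\pm|$ with the density rates $\lambda_\mp$, and the inequalities are the quantitative thresholds ensuring the convolution inherits a single sign change of its derivative, i.e. $\partial_z\chS_c>0$ on $(-\infty,0)$ and $\partial_z\chS_c<0$ on $(0,\infty)$.

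Finally I would close the construction by selecting $c$. Since $\chS_c$ is $C^2$ with its sole maximum at the switch point, continuity of $\partial_z\chS_c$ forces the compatibility condition $\partial_z\chS_c(0)=0$; for $c\neq 0$ advection shifts the peak of $G_{\chS}\ast\rho_c$ away from the peak of $\rho_c$, so this is a genuine scalar equation in $c$. Setting $\Phi(c):=\partial_z\chS_c(0)$, I would show $\Phi$ is continuous on $(c_*,c^*)$ and changes sign across the interval by analysing its behaviour as $c\to c_*^+$ and $c\to(c^*)^-$, and conclude by the intermediate value theorem that a root exists; condition \eqref{eq:cond 3} is what guarantees this root can be taken nonnegative (trivial when $\chi_{\chN}\ge\chi_{\chS}$, since then $c_*=0$, and a rate comparison at $c=0$ otherwise). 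The resulting triple $(f_c,\chS_c,\chN_c)$ then solves \eqref{eq:TW}--\eqref{eq:T}. The main obstacle is the kinetic transmission step together with the sign verification: making the half-line Case decomposition and the $z=0$ matching rigorous in the weighted $L^2$ setting despite the corner in $\rho_c$, and then proving that the parabolic convolution genuinely preserves single-peakedness, is the technical heart, and it is precisely this that the conditions \eqref{eq:cond 1}--\eqref{eq:cond 3} are engineered to make true.
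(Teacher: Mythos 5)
Your construction follows the same route as the paper's own proof: adopt the sign ansatz \eqref{eq:ansatz} as a hypothesis to be verified a posteriori, reduce the kinetic equation to the decoupled problem \eqref{eq:decoupled} with the piecewise-constant rates $T_\pm$, feed the resulting density $\rho$ into the elliptic equation \eqref{eq:signal_elliptic}, and select the speed by an intermediate-value argument on the scalar function $\Phi(c)=\partial_z \chS_c(0)$, which is precisely the paper's auxiliary function $\Upsilon$ of \eqref{eq:gamma_C}; condition \eqref{eq:cond 3} plays the same role in both arguments (nonnegativity of the root, needed to verify the ansatz on $\chN$ afterwards).

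There is, however, a genuine misallocation of the hypotheses, and it leaves your key step unsupported. You spend conditions \eqref{eq:cond 1}--\eqref{eq:cond 2} on proving that $\chS_c$ is unimodal (``the convolution inherits a single sign change of its derivative''). In the paper this property is unconditional: the central lemma of \cite{calvez_existence} states that the density $\rho$ produced by \eqref{eq:decoupled} is itself unimodal with maximum exactly at $z=0$, and unimodality then passes through the elliptic operator to $\widetilde{\chS}$ with no restriction on $(\alpha, D_{\chS})$. What \eqref{eq:cond 1} and \eqref{eq:cond 2} actually do is fix the endpoint signs, $\Upsilon(c)>0$ as $c\searrow c_*$ and $\Upsilon(c)<0$ as $c\nearrow c^*$; these are exactly what make the intermediate value theorem produce a root, and they are not free --- the paper stresses that for parameters violating \eqref{eq:parameter conditions} the equation $\Upsilon(c)=0$ may have several roots, and presumably none, so the sign change cannot be obtained merely by ``analysing the behaviour'' at the endpoints without quantitative input. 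As written, your final step therefore asserts precisely what the two conditions are needed to prove, while consuming them at a stage where they are not needed. Two smaller inaccuracies: continuity of $\Upsilon$ is where the absolute-continuity/$L^p$ assumption on $\nu$ does its essential work (it fails for discrete velocity measures, as the paper's footnote emphasizes), not only the spectral analysis; and $\chi_{\chN}\geq\chi_{\chS}$ gives $c_*\geq 0$ rather than $c_*=0$ (equality only when $\chi_{\chN}=\chi_{\chS}$), though your conclusion that nonnegativity of $c$ is then automatic remains correct.
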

\begin{proof} (Sketch) The strategy of proof is inspired from the macroscopic limit of \eqref{eq:TW} in the diffusive regime \cite{saragosti_mathematical_2010}. It is based on a fixed-point argument on the signal concentration $\chS$. It is assumed {\em a priori} that $\chN$ is increasing ($\partial_z \chN$ is positive), and that $\chS$ is unimodal ($\partial_z \chS$ changes sign only once), with a unique maximum located at $z = 0$, see \eqref{eq:ansatz}. The resulting tumbling rate is deduced according to the rule of signs in Figure \ref{fig:sign convention}. One of the main results contained in \cite{calvez_existence} states that the spatial density  $\rho(z) = \int f(z,v)\, d\nu(v)$, obtained from \eqref{eq:decoupled}, is unimodal too. Furthermore, it reaches its maximum point at $z=0$, as for $\chS$. As a by-product, $\widetilde \chS$, defined as the solution the following reaction-diffusion (elliptic) equation,
\begin{equation}\label{eq:signal_elliptic}
 - c\partial_z \widetilde \chS (z)  -  D_{\chS} \partial_{zz} \widetilde \chS (z)  + \alpha \widetilde \chS (z) = \beta \rho(z)\, 
\end{equation}
is unimodal, which is consistent with the preliminary assumption \eqref{eq:ansatz}. However, its maximum point may not coincide with $z=0$, except if $c$ is chosen appropriately, see Figure \ref{fig:variation}. If $c$ is chosen such that  the maximum point of $\widetilde \chS$ is located at $z=0$, then $\widetilde \chS = \chS$ can be chosen consistently to solve the full coupled system \eqref{eq:TW}.
\begin{figure}[t]
\begin{center}
\includegraphics[width = .45\linewidth]{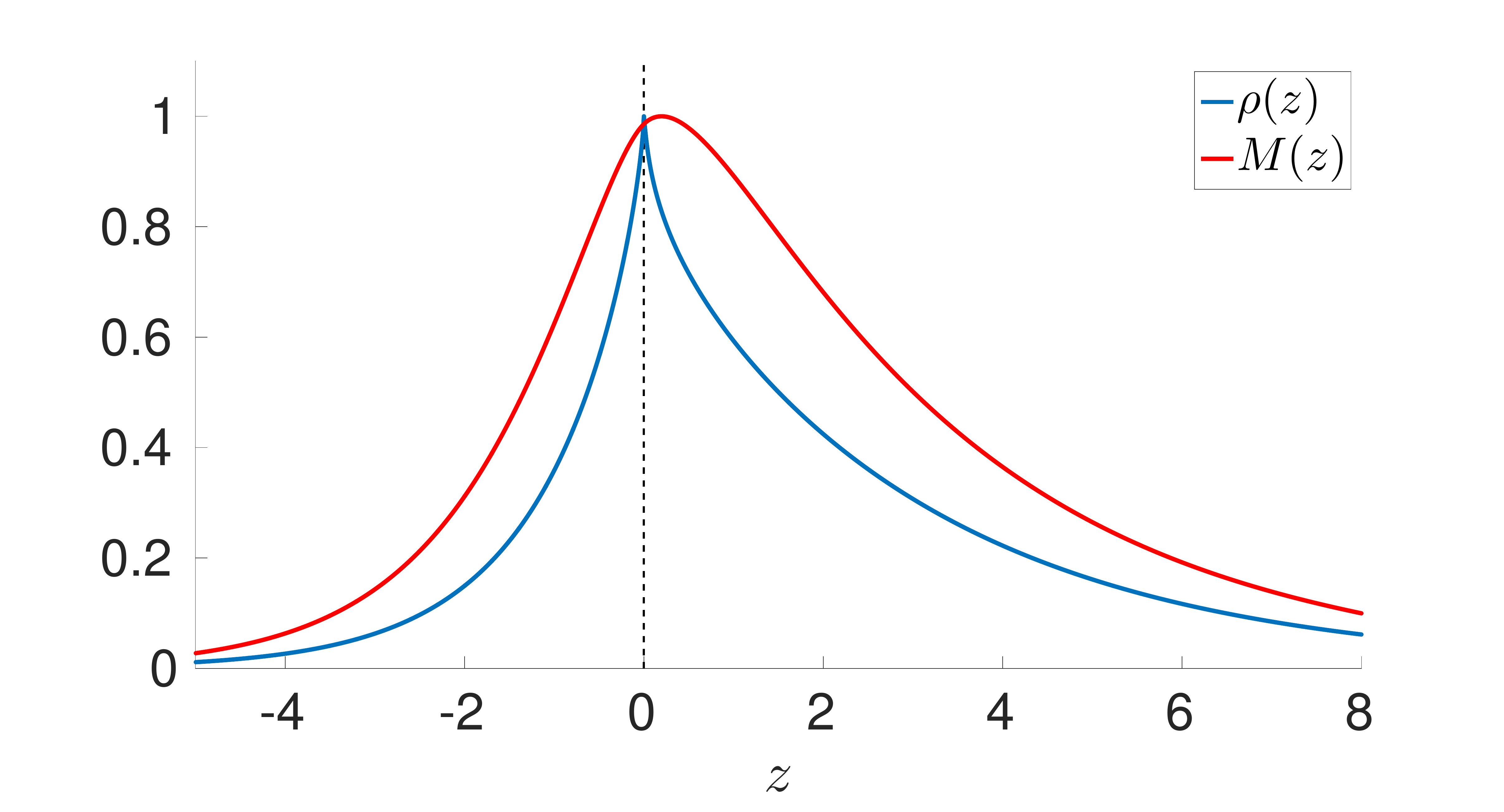}\;
\includegraphics[width = .45\linewidth]{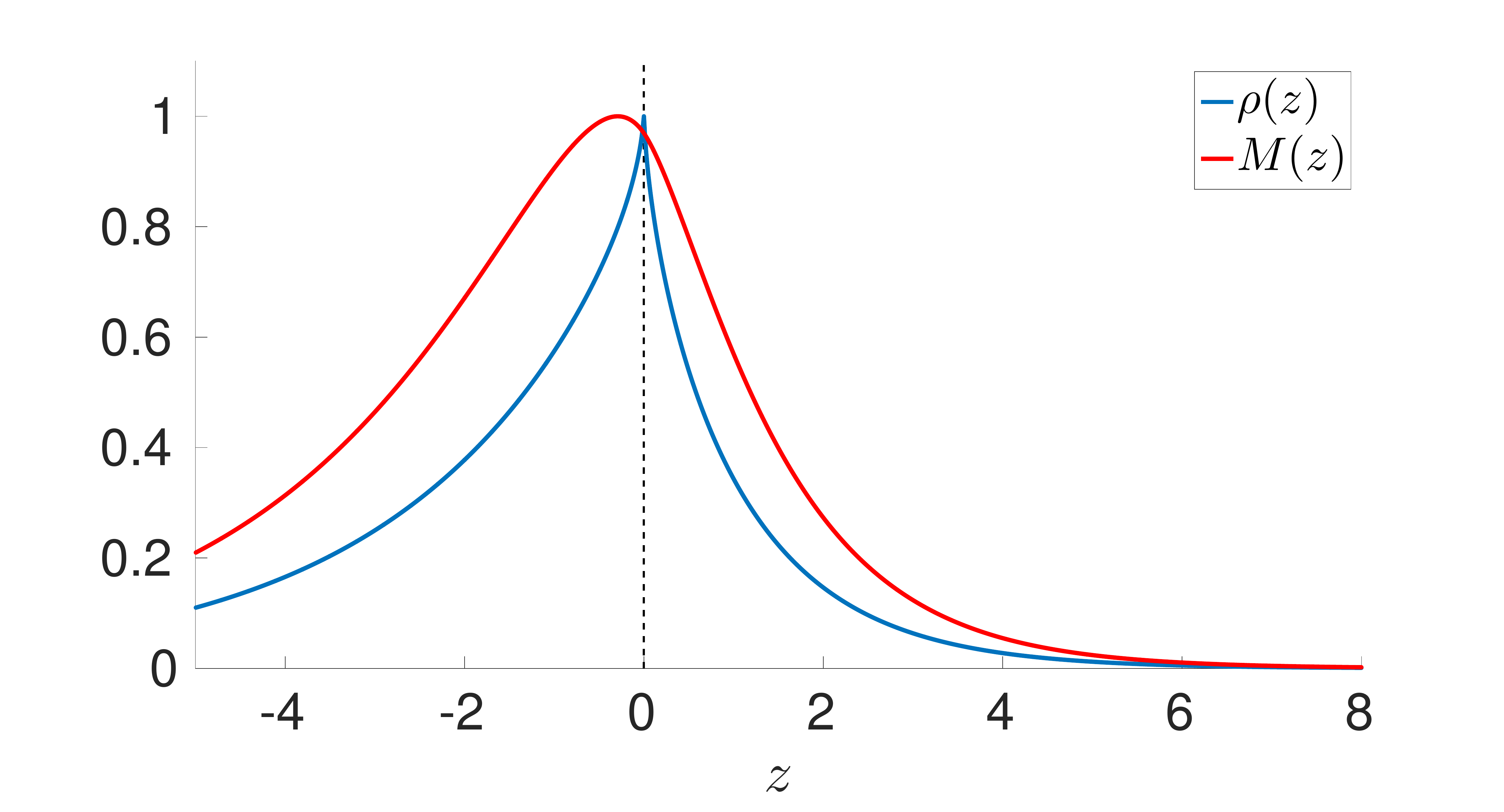}\\
\includegraphics[width = .6\linewidth]{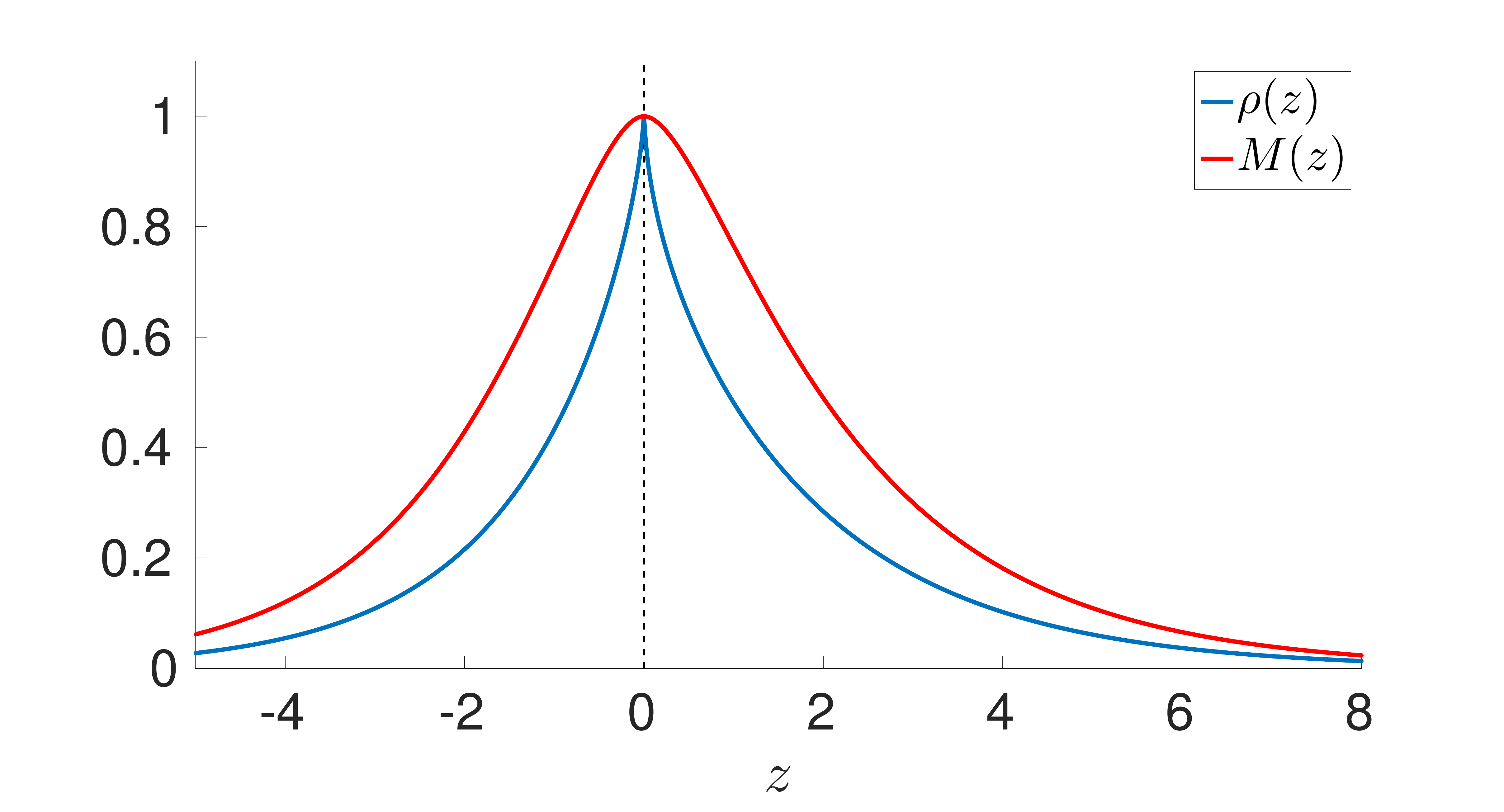}
\caption{Illustration of the solution $\widetilde\chS$ of the elliptic equation \eqref{eq:signal_elliptic}, where $\rho$ is deduced from the kinetic equation in \eqref{eq:TW}. (Left) Here, $c$ is chosen too small. As a result, the spatial density $\rho$ is tilted towards the right side. Accordingly, the maximum value of $\widetilde\chS$ is shifted to the right as well. (Right) The opposite conclusion holds if $c$ is chosen too large. (Bottom) Using a continuity argument, there must exist a value of $c$ for which the two maximum points coincide. However, one should use any kind of intuition with caution, as there is no monotonicity with respect to $c$ in general.}
\label{fig:variation}
\end{center}
\end{figure}
This motivates the following definition of an auxiliary function $c\mapsto \Upsilon(c)$, 
\begin{equation}\label{eq:gamma_C}
\boxed{
(c_*,c^*) \ni c \mapsto 
\Upsilon(c) = \partial_z \widetilde \chS(0).}
\end{equation}
Since $\widetilde \chS$ is unimodal, we have the following simple observation: if $\Upsilon(c)>0$, then the maximum of $\widetilde \chS$ is reached for $z>0$, whereas if $\Upsilon(c)<0$, then the maximum of $\widetilde \chS$ is reached for $z<0$.
These monotonicity considerations pave the way for a variational procedure:
\begin{enumerate}
\item  On the one hand, condition \eqref{eq:cond 1} guarantees that $\Upsilon(c)$ is positive as $c\searrow c_*$;
\item On the other hand, \eqref{eq:cond 2} guarantees that $\Upsilon(c)$ is negative as $c\nearrow c^*$;
\item Moreover, it is continuous\footnote{Continuity of the function $\Upsilon$ requires some regularity in the kinetic problem \eqref{eq:decoupled}. In particular, the velocity distribution is supposed to be absolutely continuous with Lebesgue's measure for that purpose. Continuity fails in the case of a discrete measure supported on a finite number of velocities, as in the numerical scheme discussed later in this article.} with respect to $c$;
\item Therefore, there exists $c\in (c_*,c^*)$ for which the maximum of $\widetilde \chS$ is located at $z=0$. The third condition \eqref{eq:cond 3} guarantees that $c$ can be chosen to be positive, which is a requirement for checking that $\chN$ is increasing afterwards.
\end{enumerate}
\end{proof}
It is a natural question to ask whether conditions \eqref{eq:parameter conditions} can be removed. Another issue is about uniqueness of the travelling wave. As a side result of the present numerical investigation, we found that there may exist several values of $c$ for which travelling wave exists, for some regions of the parameter set which violate \eqref{eq:parameter conditions}. This is in contradiction with the macroscopic limit of \eqref{eq:TW} in the diffusive regime, for which there exists a unique wave speed.
We presume that there  exist some parameter values for which no travelling exist,  see \cite{ours}.

Theorem \ref{theo:kin TW} is concerned with a continuum of velocities. This is a crucial assumption to ensure continuity of the auxiliary function $\Upsilon$. The case of a discrete number of velocities has been developed in \cite{ours}, where it is shown that the function $\Upsilon$ is well-defined.

\section{Approximation of the weakly nonlinear system}\label{sec:scheme}

Hereafter, an efficient numerical strategy for model \eqref{eq:model} is presented so as to reduce as much as possible the time-growth of accumulating errors. For such purposes, well-balanced (WB) schemes were proved convenient, see \cite{Amadori_Gosse}. A uniform Cartesian grid in the $(t,x)$-variables is defined through grid parameters $\DX, \DT>0$ so that $x_j =j \DX$ for convenient $j \in \Ze$, $t^n=n\DT$, $n \in \Na$. By convention, our control cells are $C_j=(x_{j-\frac 1 2},x_{j+\frac 1 2})$. We shall also consider staggered cells $(x_{j-1},x_j)$.

\subsection{Review of kinetic well-balanced schemes}

Equations \eqref{eq:model} constitute a weakly nonlinear system. Especially, parabolic equations (\ref{para-S-N}) are expected to react slowly to (macroscopic) density fluctuations. Thus, it makes sense to stipulate that either material derivatives $\frac{D\chS}{Dt}, \frac{D\chN}{Dt}$ in (\ref{kinetic}), or the macroscopic density $\rho$ in (\ref{para-S-N}), are constant during a time-step $\DT>0$.
\begin{figure}[t]
\begin{center}
\includegraphics[width = .52\linewidth]{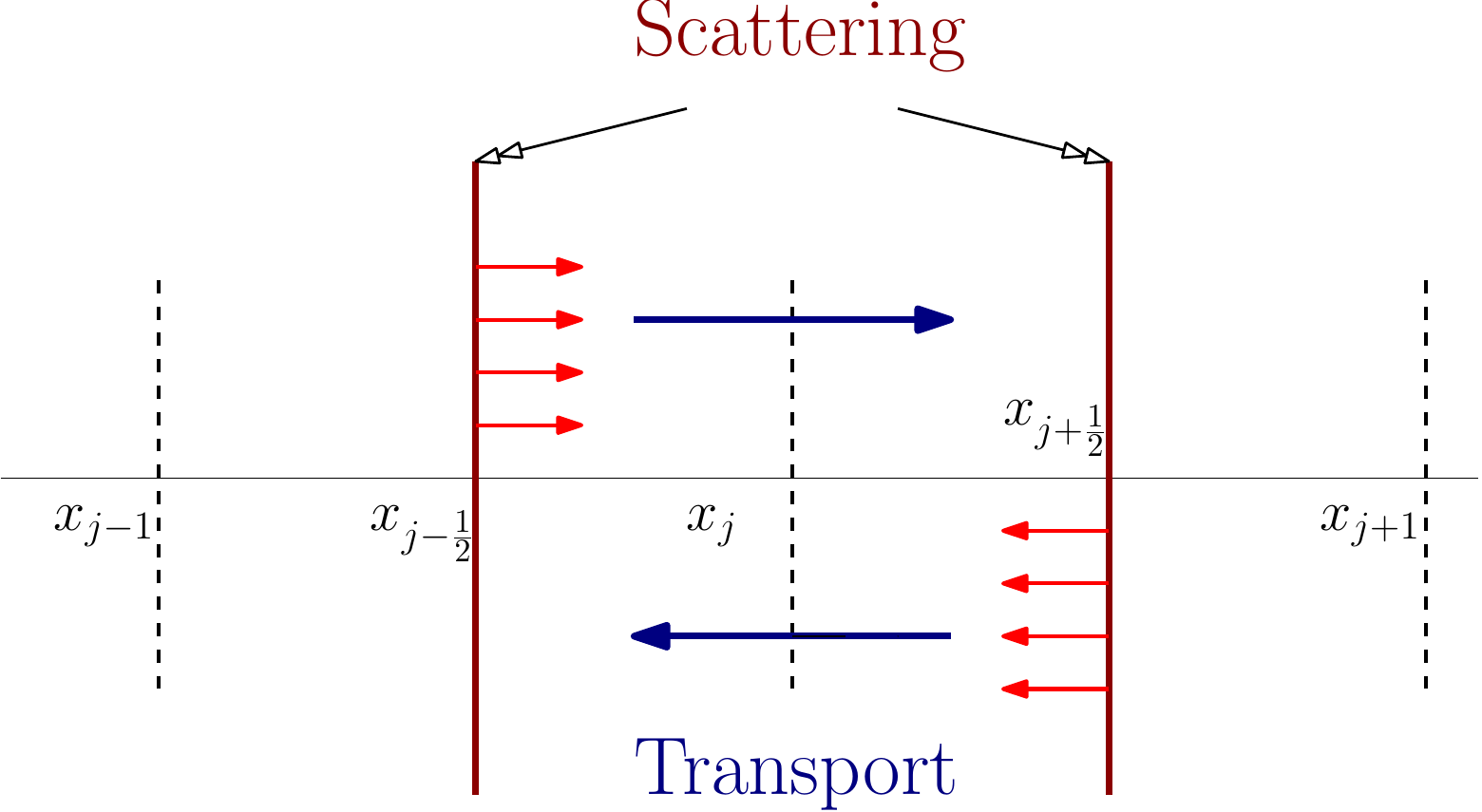}\quad
\includegraphics[width = .38\linewidth]{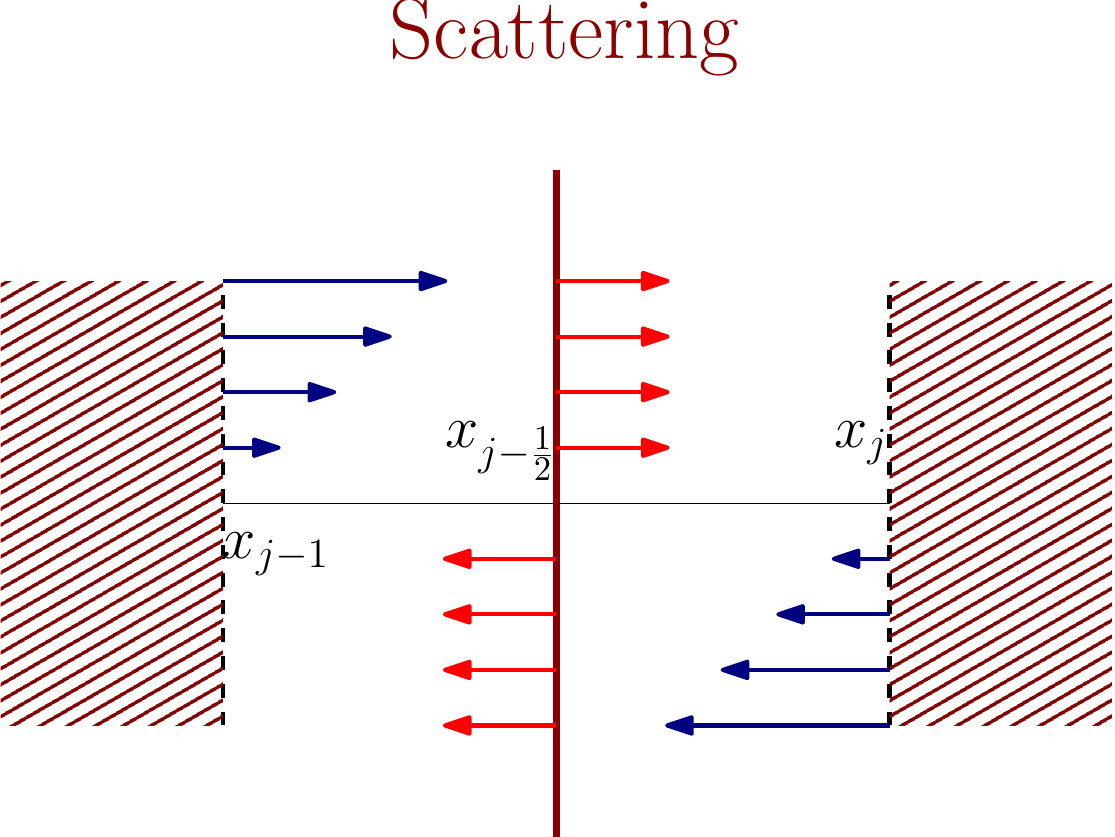}
\caption{Localization of the scattering events (tumbling) in well-balanced method for models like (\ref{kinetic}).  (Left) According to \eqref{eq:transport}, transport is solved in the cell $C_j$ using the approximated values of $f$ at the interfaces. (Right) The value at each interface is reconstructed by solving the stationary boundary-value problem \eqref{eq:scatt} in each staggered cell.}
\label{fig:scheme}
\end{center}
\end{figure}
Well-balanced schemes for $(1+1)$-dimensional linear collisional models were thoroughly presented in \cite[Part II]{Gosse_book}. Roughly speaking, a WB treatment of models like (\ref{kinetic}) consists in (formally) concentrating the scattering events at fixed locations of the computational grid (see Figure \ref{fig:scheme}),
\begin{equation}
\Dt f + v \Dx f = \DX \sum_{j \in \Ze} \left( \int_V \calT(t,x,v')f(t,x,v') d\nu(v') - \calT(t,x,v)f(t,x,v) \right) \delta(x-x_{j-\frac 1 2}).\label{eq:Dirac}
\end{equation}
Applying the nowadays standard Godunov procedure to the former (and more singular) equation, a time-marching numerical scheme is derived. Scattering events are rendered through supplementary jump relations at each $x_{j-\frac{1}{2}}$: according to \cite{Gosse_red},
\begin{equation}\label{eq:transport}
f_j^{n+1}(v )  = 
\begin{cases} 
f^n_j(v )-\dfrac{\DT}{\DX}v \left(f^n_j( v  )-\tf^n_{j-\frac 1 2}( v  )\right),  \quad&\text{if $v >0$} \medskip\\
f^n_j(v ) - \dfrac{\DT}{\DX}v  \left(\tf^n_{j+\frac 1 2}(v )-f^n_j(v )\right), \quad&\text{if $v <0$}
\end{cases},
\end{equation}
where shorthand notation was used: $f^n_j(v ) \simeq f(t^n,x_j,v )$, and $\tf^n_{j-\frac 1 2}( v  )$ (resp. $\tf^n_{j+\frac 1 2}( v )$) denote the approximated value of $f$ at the interface $x_{j-\frac 12}$ (resp. $x_{j+\frac 12}$). The latter approximations take into account the scattering operator, as explained below.
The presence of ``Dirac collision terms'' in \eqref{eq:Dirac} induces a static discontinuity at each interface separating two control cells $C_{j-1}$ and $C_j$. Hence, the approximated value $\tf^n_{j\pm\frac 1 2}$ which appears in \eqref{eq:transport}. The way to relate these interface states 
and the values of $f_{j}$ at the center of the control cells is done through a scattering matrix. Conceptually, well balanced schemes are constructed to be exact on equilibrium states. This motivates to compute either analytically, or numerically the following (forward/backward) boundary-value problem (BVP) on each staggered cell $(x_{j-1},x_j)$ (see Figure \ref{fig:scheme}),
\begin{subequations}\label{eq:BVP}
\begin{equation}\label{eq:scatt}
v \Dx \tfe =\int_V \calT^n_{j-\frac 1 2}(v')\tfe(x,v') d\nu(v')- \calT^n_{j-\frac 1 2}(v)\tfe(x,v), \qquad x \in (x_{j-1},x_j),
\end{equation}
where $\calT^n_{j-\frac 1 2}(v)$ is ``frozen'' in space and time. Such a quantity stands for a reliable approximation of the tumbling mechanism at each interface $x_{j-\frac 1 2}$, as described in Section~\ref{sec:tumbling}. This BVP is complemented with inflow data 
\begin{equation}\label{eq:inflow data}
\begin{cases}(\forall v>0)\;& \tfe(x_{j-1},v) = f^n_{j-1}(v),  \medskip\\
(\forall v<0)\;& \tfe(x_{j},v) = f^n_{j}(v). 
\end{cases}
\end{equation}
\end{subequations}

\begin{rmk}
The well-balanced scheme using the equilibrium equation \eqref{eq:scatt} to derive the interface states is exact on stationary solutions, that is waves with speed $c=0$. So, if $c \not =0$, it is not endowed with an exact balance between transport and collision operators, as already explained in Remark \ref{rem-zero-TW}. Moreover, the wave speed $c$ is not known a priori and the steady equation \eqref{eq:decoupled} isn't easy to be used for deriving a scheme which remains consistent with (\ref{eq:model}). 
\end{rmk}  

To deal with the integral contribution in \eqref{eq:scatt}, a numerical quadrature in the $v$-variable is characterized by nodes and weights, which are symmetrical with respect to zero: 
\begin{equation}\label{quad}
\V=\{v_{-\nG},...,v_{-1},v_{1},..., v_{\nG}\} \in (-1,1)^{2\nG}, \quad \O=\{\omega_{-\nG},..., \omega_{-1},\omega_{1}, ..., \omega_{\nG}\}\in \Re^{2\nG}_+,
\end{equation}
such that $v_{k}\neq 0$, $v_{k}<v_{k+1}$ for all $k$. We denote by $\ksum = [-\nG,\nG]\setminus\{0\}$ the set of indices. We assume that the weights are normalized, such that $\sum_{\ksum}\omega_k = 1$.  For notational purposes, we also define $\calV \in (\Re_+^*)^{\nG}$  such that $\V= (-\calV)\cup \calV $, and $\calW=[\calV, \calV] \in \Re^{2\nG}$. The integral term in  \eqref{eq:scatt} is approximated by,
\begin{equation}\label{eq:quadra}
\sum_\ksum\omega_k T^n_{j-\frac12}(v_k)\tfe(x,v_k).
\end{equation}
To complete the scheme \eqref{eq:transport}, we seek the linear transformation that relates the outgoing states $\tf(x_{j-\frac12},v)$ with the incoming states \eqref{eq:inflow data} through the BVP \eqref{eq:BVP}. In the discrete velocity setting \eqref{quad}, it is given by a $2\nG\times 2\nG$ $S$-matrix which is denoted by $S^n_{j-\frac 1 2}$. In short, the BVP \eqref{eq:BVP} boils down to the following linear relation,
\begin{equation}\label{eq:outgoing_incoming}
\begin{pmatrix}
\tf(x_{j-\frac12},\calV) \\
\tf(x_{j-\frac12}, - \calV)  
\end{pmatrix} = S^n_{j-\frac 1 2}
\left( \begin{array}{c}f_{j-1}^{n}(\calV) \\ f_{j}^{n}(-\calV) \end{array}\right).
\end{equation}
Plugging this relation into the transport part on each cell \eqref{eq:transport}, after some shift in the indices, we arrive at our numerical scheme written in a concise way,
\BE\label{god}
\left( \begin{array}{c}f_j^{n+1}(\calV) \\ f_{j-1}^{n+1}(-\calV) \end{array}\right)=
\left(1-|\calW| \frac{\DT}{\DX}\right)
\left( \begin{array}{c}f_j^{n}(\calV) \\ f_{j-1}^{n}(-\calV) \end{array}\right)
+|\calW| \frac{\DT}{\DX} S^n_{j-\frac 1 2}
\left( \begin{array}{c}f_{j-1}^{n}(\calV) \\ f_{j}^{n}(-\calV) \end{array}\right).
\EE

Define the diagonal matrix, 
\begin{displaymath}
\Gamma=\mbox{diag}(\omega_k|v_k|), \qquad k=1, ..., 2\nG,
\end{displaymath} 
the next Lemma states basic properties of the scheme  (\ref{god}).
\begin{lemma}
For any nonnegative initial data $0\leq f^0(x,v) \in L^1(\Re \times V)$, $V$ standing for $(-1,1)$, the scheme (\ref{god}) preserves both non-negativity and total mass (hence a uniform $L^1$ bound) of $f^n_j(\pm \calV)$ as soon as 
\begin{eqnarray}\label{eq:cfl}
\max(\V)\cdot \DT \leq \DX,&\qquad \mbox{(CFL condition),} \\
\label{eq:stoch}\forall j, \quad 
\Gamma\, S_{j-1/2}^n \, \Gamma^{-1}&\, \mbox{  is left-stochastic.}
\end{eqnarray} 
\end{lemma}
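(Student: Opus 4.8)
The plan is to prove the two stated invariants separately, both as direct consequences of the two hypotheses, and then to read off the uniform $L^1$ bound. Throughout I write $\lambda = \DT/\DX$ and, starting from the interface-based update \eqref{god}, I first reassemble the \emph{full} update of a single control cell $C_j$: by the upwind rule \eqref{eq:transport} its rightward component $f_j^{n+1}(\calV)$ is produced at the interface $x_{j-\frac12}$ (top block of \eqref{god}), whereas its leftward component $f_j^{n+1}(-\calV)$ is produced at the neighbouring interface $x_{j+\frac12}$ (bottom block of \eqref{god} after the index shift $j\mapsto j+1$), each through $S^n_{j\mp\frac12}$ acting on the incoming data in \eqref{eq:outgoing_incoming}.

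For non-negativity the argument is purely algebraic. The diagonal matrix $1-|\calW|\lambda$ has entries $1-|v_k|\lambda$, which are nonnegative exactly under the CFL condition \eqref{eq:cfl}, since $\max_k|v_k|=\max(\calV)$; the factor $|\calW|\lambda$ is nonnegative as well. Finally, because $\Gamma=\diag(\omega_k|v_k|)$ is a \emph{positive} diagonal matrix, the conjugate $\Gamma S^n_{j-\frac12}\Gamma^{-1}$ has the same sign pattern as $S^n_{j-\frac12}$; hence the assumed left-stochasticity \eqref{eq:stoch} forces $S^n_{j-\frac12}\ge 0$ entrywise. Each component of the right-hand side of \eqref{god} is then a sum of products of nonnegative quantities, so $0\le f^n\Rightarrow 0\le f^{n+1}$, and non-negativity follows by induction on $n$ from $f^0\ge 0$.

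For mass conservation I track the discrete total mass $M^n=\DX\sum_{j\in\Ze}\sum_\ksum \omega_k f^n_j(v_k)$, which by the symmetry $\omega_{-k}=\omega_k$, $v_{-k}=-v_k$ equals $\DX\sum_j\sum_{k=1}^{\nG}\omega_k\bigl(f^n_j(\calV)_k+f^n_j(-\calV)_k\bigr)$. Writing $\gamma_k=\omega_k|v_k|$ and $\gamma=(\gamma_k)_{k=1}^{\nG}$, the common diagonal block of $\Gamma$, the key observation is that the mass weight $\omega_k$ times the upwind factor $|v_k|$ supplied by the transport step equals precisely $\gamma_k$, so that $\un^\top\Gamma=(\gamma^\top,\gamma^\top)$. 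Inserting the reassembled cell update and summing over $j$, the transport parts reproduce $M^n$ minus a term $\DX\lambda\sum_j\gamma^\top(f^n_j(\calV)+f^n_j(-\calV))$, while the two scattering parts (top block from $x_{j-\frac12}$ and, after the shift, bottom block from $x_{j-\frac12}$) combine into the single contribution $\DX\lambda\sum_j\un^\top\Gamma\,S^n_{j-\frac12}W_j$, with $W_j=\bigl(f^n_{j-1}(\calV),f^n_j(-\calV)\bigr)^\top$. Now \eqref{eq:stoch} (columns of $\Gamma S^n_{j-\frac12}\Gamma^{-1}$ summing to one) reads $\un^\top\Gamma S^n_{j-\frac12}=\un^\top\Gamma$, whence $\un^\top\Gamma S^n_{j-\frac12}W_j=\un^\top\Gamma W_j=\gamma^\top f^n_{j-1}(\calV)+\gamma^\top f^n_j(-\calV)$. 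Substituting this identity cancels the diagonal and all $-\calV$ terms and leaves the telescoping sum $\DX\lambda\sum_j\bigl(\gamma^\top f^n_{j-1}(\calV)-\gamma^\top f^n_j(\calV)\bigr)$, whose partial sums reduce to boundary terms that vanish because $f^n\in L^1$ makes $j\mapsto\gamma^\top f^n_j(\calV)$ absolutely summable with null limits at $j\to\pm\infty$. Hence $M^{n+1}=M^n$, and combining the two invariants yields the bound for free: since $f^n_j(\pm\calV)\ge 0$, $\|f^n\|_{L^1}=M^n=M^0=\|f^0\|_{L^1}$ uniformly in $n$.

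The step I expect to be delicate is this mass computation, precisely because of the mismatch between the mass weight $\omega_k$ and the flux weight $\omega_k|v_k|$: it is the upwind factor $|v_k|$ that upgrades the weighting exactly to $\Gamma$, and one must verify that it is the \emph{column}-sum-one property $\un^\top\Gamma S^n_{j-\frac12}=\un^\top\Gamma$ that forces the cancellation, and not the physically more obvious current-conservation identity (which would correspond instead to preservation of the block-alternating left-eigenvector $(\un,-\un)$). Keeping the staggered bookkeeping straight — routing $\calV$ through $x_{j-\frac12}$ and $-\calV$ through $x_{j+\frac12}$ before summing — and justifying the vanishing of the telescoped boundary term for $L^1$ data on $\Re$ are the two places where genuine care is required; everything else is routine.
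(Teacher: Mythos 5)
Your proof is correct and takes essentially the same route as the paper's: non-negativity follows from the CFL condition together with the fact that conjugation by the positive diagonal matrix $\Gamma$ preserves the sign pattern of $S^n_{j-1/2}$, and mass conservation follows from the column-sum identity $\un^\top\Gamma S^n_{j-1/2}=\un^\top\Gamma$ (what the paper calls the current-preservation property), followed by reindexing and telescoping over $j\in\Ze$. The only difference is bookkeeping: the paper groups the two components updated at each interface $x_{j-\frac12}$, namely $f^{n+1}_j(\calV)$ and $f^{n+1}_{j-1}(-\calV)$, whereas you reassemble cell by cell and then shift, which is immaterial.
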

\begin{proof}
Under condition (\ref{eq:cfl}), (\ref{god}) realizes a nonnegative combination of its inputs as soon as the entries of $S_{j-1/2}^n$ are nonnegative. Yet, assume moreover that (\ref{eq:stoch}) holds, we  compare with the standard upwind scheme for free transport,
\begin{displaymath}
\forall n\geq 0, \qquad \DX \sum_{i \in \Ze}\rho_i^{n+1}=
\DX \sum_{j \in \Ze, k \in \{1, ... \nG\}}  \omega_k (f^{n+1}_j(v_k)+f^{n+1}_{j-1}(-v_k)).
\end{displaymath}
Yet, by requiring (\ref{eq:stoch}), one secures that,
\begin{displaymath}
\begin{pmatrix}
\omega_k |v_k|\, \tf_{j-\frac12}(v_k)_{k=1, ..., \nG} \\
\omega_k |v_k|\, \tf_{j-\frac12}(-v_k)_{k=1, ..., \nG} \\
\end{pmatrix} = \Gamma\, S_{j-1/2}^n \, \Gamma^{-1}
\left( \begin{array}{c}
\omega_k |v_k|\, f_{j-1}^{n}(v_k)_{k=1, ..., \nG} \\ 
\omega_k |v_k|\, f_{j}^{n}(-v_k)_{k=1, ..., \nG} 
\end{array}\right),
\end{displaymath}
which brings the following (current-preservation) property,
\begin{displaymath}
\sum_{k \in \{1, ... \nG \}}\omega_k |v_k| (\tilde f_{j-\frac 1 2}(v_k)+ \tilde f_{j-\frac 1 2}(-v_k))
= \sum_{k \in \{1, ... \nG \}}\omega_k |v_k|(f_{j-1}(v_k)+ f_{j}(-v_k)),
\end{displaymath}
so that, from the expression (\ref{god}), it follows that
\begin{eqnarray*}
\DX \sum_{j \in \Ze}\rho_j^{n+1}&=&\sum_{j \in \Ze, k \in \{1, ... \nG \}}  \omega_k (\DX-v_k\DT)(f^{n}_j(v_k)+f^{n}_{j-1}(-v_k)) \\ 
&&\qquad +\ \DT \sum_{j \in \Ze, k \in \{1, ... \nG \}}  \omega_k\ v_k (\tilde f_{j-\frac 1 2}(v_k)+ \tilde f_{j-\frac 1 2}(-v_k)) \\
&=& \sum_{j \in \Ze, k \in \{1, ... \nG \}}  \omega_k (\DX-v_k\DT)(f^{n}_j(v_k)+f^{n}_{j-1}(-v_k)) \\ &&\qquad +\ \DT \sum_{j \in \Ze, k \in \{1, ... \nG \}}  \omega_k\ v_k (f_{j-1}(v_k)+ f_{j}(-v_k)) \\
&=&\DX \sum_{j \in \Ze}\rho_j^{n},
\end{eqnarray*}
\end{proof}

This Lemma is somehow the complementary of \cite[Proposition 1]{Gosse_red}; it furnishes an easy-to-check (sufficient) condition for both positivity- and mass-preservation for a scheme written in the form (\ref{god}). However, except for elementary two-stream models, like the ones studied in \cite{Gosse_Vauchelet}, it is usually difficult to check the ``left-stochastic'' property in practice.
In the next section, we propose two choices for deriving $S$-matrices $S^n_{j-\frac 1 2}$.

\subsection{Derivation of two  different $S$-matrices}\label{sec:scattering_matrix}

Our first option builds on the original analysis of ``Case's elementary solutions'' devoted to the accurate description of solutions to the stationary BVP \eqref{eq:BVP}, which involves exponentially damped modes. A second option  draws onto second-order finite-differences, see \cite[Chap. 10]{Gosse_chemo}.
\subsubsection{Case's elementary solutions}\label{sec:S_case}
Following \cite{Aamodt_Case,Barichello_ADO1999,c1}, solutions of \eqref{eq:BVP} are sought as a (finite) combination of Case's elementary modes with separated variables: 
\begin{equation}\label{eq:case}
G(x,v) = \exp(-\lambda x)\vp(v).
\end{equation} 
Plugging $G$ into \eqref{eq:BVP}, approximated with the velocity quadrature \eqref{eq:quadra}, we get an equation for the pair $(\lambda, \varphi)$:
\begin{equation}
(\forall k)\quad \left (T^n_{j-\frac 1 2}(v_k)-\lambda  v_k \right ) \vp(v_k) = \sum_\lsum\omega_\ell T^n_{j-\frac12}(v_\ell)\vp(v_\ell).  
\end{equation}
Thus, each ``constant of separation'' $\lambda$ is a solution to the following equation: 
\begin{equation}
\sum_\ksum\omega_k \dfrac{T^n_{j-\frac12}(v_k)}{T^n_{j-\frac 1 2}(v_k)-\lambda  v_k } = 1.
\end{equation}
Clearly, $\lambda = 0$ is a special solution, which results from mass conservation. The corresponding eigenvector $\varphi$ is:
\begin{equation}
\overline{\varphi}(v) = \frac{1}{T^n_{j-\frac12}(v)}.
\end{equation} 
Other solutions are given by the equivalent relation
\begin{equation}
\sum_\ksum\omega_k \left( \frac{T^n_{j-\frac 1 2}(v_k)}{v_k}-\lambda\right )^{-1} = 0.
\end{equation}
By studying the variations of the left-hand-side with respect to $\lambda$, one deduces the existence of exactly $2\nG-1$ distinct solutions which are interlaced, like
\begin{multline}\label{eq:entrelacing}
\frac{T^n_{j-\frac 1 2}(v_{-1})}{v_{-1}} < \lambda_{-\nG+1} < \frac{T^n_{j-\frac 1 2}(v_{-1})}{v_{-2}} <  \lambda_{-\nG+2}  < (\dots) < \frac{T^n_{j-\frac 1 2}(v_{-\nG})}{v_{-\nG}} < \lambda_0 \\ <   \frac{T^n_{j-\frac 1 2}(v_{\nG})}{v_{\nG}} < \lambda_1 <  \frac{T^n_{j-\frac 1 2}(v_{\nG-1})}{v_{\nG-1}} < \lambda_2 < (\dots) < \lambda_{\nG-1} < \frac{T^n_{j-\frac 1 2}(v_{1})}{v_{1}}.
\end{multline} 
\begin{rmk}
There is some subtlety hidden there, because the values $T^n_{j-\frac 1 2}(v_{k})/v_{k}$ do not necessarily respect the order of $(1/v_k)$. This would be the case if  $T^n_{j-\frac 1 2}(v)$ depends only on the sign of $v$, as assumed implicitly in \eqref{eq:entrelacing}, {\em e.g.} in a stationary chemical field $\chS(t,x) = \chS(x)$, for which  $\frac{D \chS}{Dt} =   v' \Dx \chS $. In full generality, the value of $\calT$   depends on the sign of two material derivatives, which are obviously affine with respect to $v$. Consequently, there exist two cutting values, where each contribution in $\calT$ changes sign. Both values on each sides of these cuts respect the order of $(1/v_k)$, simply because $\calT$ is piecewise constant. 
\end{rmk}

Notice that  the sign of $\lambda_0\neq 0$ is determined by the sign of the mean flux
\begin{equation}
\sum_\ksum\omega_k   \frac{v_k}{T^n_{j-\frac 1 2}(v_k)} .
\end{equation}
As a conclusion, the solutions of  the approximated BVP can be written in a general form as a combination of $2\nG$ independent modes:
\begin{equation}\boxed{
\tfe(x,v) =   \frac{\overline{A}}{T^n_{j-\frac12}(v)} + \sum_{\ell = -\nG+1}^{\nG-1} \frac{A_\ell}{T^n_{j-\frac12}(v) - \lambda_\ell v} \exp(-\lambda_\ell x),}
\end{equation}
where the $\lambda_\ell$'s are eigenvalues of a rank-1 perturbation of a diagonal matrix,
\begin{displaymath}
P = \mathrm{diag}\left(T_{j-\frac 1 2}(\V)\V^{-1}\right)-(\O T_{j-\frac 1 2}(\V)) \otimes (\V^{-1})^T, 
\end{displaymath}
associated to an eigenvector $\vp_\ell(v)$. 
The above formulation enables to build the scattering matrix \eqref{eq:outgoing_incoming}, which relates inflow data to values at the middle of the staggered cell ({\em i.e.} the interface between $C_{j-1}$ and $C_j$), see Figure \ref{fig:scheme}.  
Indeed, the degrees of freedom $\overline{A},(A_\ell)$ are obtained by solving the following system of linear equations:
\begin{subequations}\label{eq:expansion_cases}
\begin{equation}
\begin{cases}
(\forall k\in [1,\nG])\quad  f^n_{j-1}(v_k) = \displaystyle  \frac{\overline{A}}{T^n_{j-\frac12}(v_k)} + \sum_{\ell = -\nG+1}^{\nG-1} \frac{A_\ell}{T^n_{j-\frac12}(v_k) - \lambda_\ell v_k} \exp(-\lambda_\ell x_{j-1}) \medskip,\\
(\forall k\in [-\nG,-1])\quad  f^n_{j}(v_k) = \displaystyle  \frac{\overline{B}}{T^n_{j-\frac12}(v_k)} + \sum_{\ell = -\nG+1}^{\nG-1} \frac{B_\ell}{T^n_{j-\frac12}(v_k) - \lambda_\ell v_k} \exp(-\lambda_\ell x_{j}), 
\end{cases}
\end{equation}
which in the matrix form writes as
\begin{displaymath}
\left( \begin{array}{c}f_{j-1}(\calV) \\ f_{j}(-\calV) \end{array}\right)=M
\left( \begin{array}{c}\mathbf{A}\\ \mathbf{B}\end{array}\right).
\end{displaymath}
Next, values at any interface are obtained using the following reconstruction
\begin{equation}
\begin{cases}
(\forall k\in [1,\nG])\quad  \tf^n_{j-\frac12}(v_k) = \displaystyle  \frac{\overline{A}}{T^n_{j-\frac12}(v_k)} + \sum_{\ell = -\nG+1}^{\nG-1} \frac{A_\ell}{T^n_{j-\frac12}(v_k) - \lambda_\ell v_k} \exp(-\lambda_\ell x_{j-\frac12}) \medskip,\\
(\forall k\in [-\nG,-1])\quad  \tf^n_{j-\frac12}(v_k) = \displaystyle  \frac{\overline{B}}{T^n_{j-\frac12}(v_k)} + \sum_{\ell = -\nG+1}^{\nG-1} \frac{B_\ell}{T^n_{j-\frac12}(v_k) - \lambda_\ell v_k} \exp(-\lambda_\ell x_{j-\frac12}). 
\end{cases}
\end{equation}
\end{subequations}
This definines a complementary matrix $\tilde{M}$ such that
\begin{displaymath}
\left( \begin{array}{c}\tf_{j-\frac12}(\calV) \\ \tf_{j-\frac12}(-\calV) \end{array}\right)=\tilde{M}
\left( \begin{array}{c}\mathbf{A}\\ \mathbf{B}\end{array}\right).
\end{displaymath}
Eliminating the vector of coefficients $\mathbf{A}$, $\mathbf{B}$ we obtain \eqref{eq:outgoing_incoming} with 
\begin{equation}\label{eq:S_productWB}
\boxed{
S=\tilde{M}M^{-1}.}
\end{equation}
\begin{rmk}
The numerical procedure described above relies on both the computation of $2\nG-1$ eigenvalues, and the resolution of a $2\nG \times 2\nG$ linear system, at each time-step, at each interface. However, based on the very simple structure of the scattering operator studied here, namely $T$ can take only four possible values, $ 1 \pm \chi_{\chS} \pm \chi_{\chN}$, we can reduce that task to a relatively small number of cases. Indeed, one should discuss the possible cutting values where each contribution in $T$ changes sign, making just $2\times 2\times(2\nG+1)$ possibilities. All in all, this scheme requires the pre-computation of $8\nG+4$ sets of $2\nG-1$ eigenvalues $\lambda_\ell$, and linear systems resolutions. 
\end{rmk}

The previous analysis extends  to the case of a frame moving at speed $c$: $z = x-ct$ (see \cite[Section 7]{calvez_existence}). This makes sense when seeking wave propagation phenomena.  This construction shows some similarities with the one presented in \cite{emako_tang}.

\subsubsection{Second-order finite-difference approximation}\label{sec:S_approx}
 Oppositely, relying on \cite[\S10.4]{Gosse_book}, a simpler approach consists in discretizing the stationary problem in both velocity and space variables with a second order finite-difference approximation: for any $k$, 
\begin{align*}
v_k\frac{\tf_{j-\frac 1 2}(v_k)- f_{j-1}(v_k)}{\DX}& = - T_{j-\frac 1 2}(v_k)\frac{\tf_{j-\frac 1 2}(v_k)+ f_{j-1}(v_k)}{2}\\
&+\sum_\lsum\omega_\ell T_{j-\frac 1 2}(v_\ell)\frac{\tf_{j-\frac 1 2}(v_\ell)+f_{j-1}(v_\ell)}{2}\quad\textrm{ if }v_{k}>0,\\
v_k\frac{f_{j}(v_k)-\tf_{j-\frac 1 2}(v_k)}{\DX}& = - T_{j-\frac 1 2}(v_k)\frac{f_{j}(v_k)+\tf_{j-\frac 1 2}(v_k)}{2}\\
&+\sum_\lsum\omega_\ell T_{j-\frac 1 2}(v_\ell)\frac{f_{j}(v_\ell)+\tf_{j-\frac 1 2}(v_\ell)}{2}\quad\textrm{ if }v_{k}<0.
\end{align*}
Considering $f_{j-1/2}(v_k)$ as unknowns we obtain a linear system at each interface
\begin{equation}\label{eq:interface_variables}
  Q_{j-1/2}\left(
  \begin{array}{c}
    f_{j-1/2}(\calV)\\
    f_{j-1/2}(-\calV)
  \end{array}\right) = \tilde{Q}_{j-1/2}
  \left(\begin{array}{c}
    f_{j-1}(\calV)\\
    f_{j}(-\calV)
  \end{array}\right),
\end{equation}
where $Q_{j-1/2},\tilde{Q}_{j-1/2}$ are $2\nG\times 2\nG$ space and time dependent matrices:
 \begin{subequations}\label{eq:Smatrix_approx}
     \begin{align}
      Q_{j-1/2}&=\textrm{diag} \left(|\calW|+\frac{\Delta x}{2}T_{j-1/2}(\calW)\right)-\frac{\Delta x}{2}(\O T_{j-1/2}(\calW))\otimes \mathbf{1}_{2\nG\times 1},\\
      \tilde{Q}_{j-1/2}&=\textrm{diag} \left(|\calW|-\frac{\Delta x}{2}T_{j-1/2}(\calW)\right)+\frac{\Delta x}{2}(\O T_{j-1/2}(\calW))\otimes \mathbf{1}_{2\nG\times 1}.
     \end{align}
 \end{subequations}
\begin{lemma}\label{lem:Smatrix_approx}
Under the sufficient ``non-resonance'' condition,
   \BE\label{eq:condition_dv}
       v_{min}:=\min_{K=1, ..., \nG}(|v_k|)> \Delta x\cdot \left(\chi_{\chS}+\chi_{\chN}\right)\,,
   \EE
the matrix $Q$ is invertible, so the scattering matrix is 
\begin{equation}\label{eq:S_productApprox}
S_{j-1/2}=Q_{j-1/2}^{-1}\tilde{Q}_{j-1/2}.
\end{equation}
\end{lemma}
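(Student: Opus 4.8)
The plan is to prove invertibility of $Q_{j-1/2}$ by establishing strict diagonal dominance, from which the Levy--Desplanques theorem (equivalently, Gershgorin's circle theorem used to exclude $0$ as an eigenvalue) gives $\det Q_{j-1/2}\neq 0$; the formula $S_{j-1/2}=Q_{j-1/2}^{-1}\tilde Q_{j-1/2}$ then follows at once by solving \eqref{eq:interface_variables} for the interface unknowns. First I would read off the entries of $Q_{j-1/2}$ from the discretization \eqref{eq:Smatrix_approx}, taking care that the rank-one integral contribution $\frac{\Delta x}{2}(\O T_{j-1/2}(\calW))\otimes\mathbf{1}$ affects \emph{every} entry of each row, the diagonal included. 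Concretely, the diagonal entry of row $k$ is $(Q_{j-1/2})_{kk}=|v_k|+\frac{\Delta x}{2}T_{j-1/2}(v_k)(1-\omega_k)$, while the off-diagonal entries are $(Q_{j-1/2})_{k\ell}=-\frac{\Delta x}{2}\omega_\ell T_{j-1/2}(v_\ell)$ for $\ell\neq k$.

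Next I would use that $T_{j-1/2}$ takes only the four values $1\pm\chi_{\chS}\pm\chi_{\chN}$ of \eqref{eq:T}; since $(\chi_{\chS},\chi_{\chN})\in(0,1/2)\times[0,1/2)$, each such value is strictly positive, so $(Q_{j-1/2})_{kk}>0$ and equals its own modulus. The key step is to control the Gershgorin radius $R_k=\frac{\Delta x}{2}\sum_{\ell\neq k}\omega_\ell T_{j-1/2}(v_\ell)$ against the diagonal. Using $\sum_{\ell}\omega_\ell=1$, the dominance inequality $(Q_{j-1/2})_{kk}>R_k$ simplifies, after cancelling the common factor $\frac{\Delta x}{2}T_{j-1/2}(v_k)\sum_{\ell\neq k}\omega_\ell$, to
\begin{equation*}
|v_k| > \frac{\Delta x}{2}\sum_{\ell\neq k}\omega_\ell\bigl(T_{j-1/2}(v_\ell)-T_{j-1/2}(v_k)\bigr).
\end{equation*}
Since any two admissible values of $T_{j-1/2}$ differ by at most $2(\chi_{\chS}+\chi_{\chN})$, and $\sum_{\ell\neq k}\omega_\ell=1-\omega_k<1$, the right-hand side is bounded by $\Delta x(\chi_{\chS}+\chi_{\chN})(1-\omega_k)<\Delta x(\chi_{\chS}+\chi_{\chN})$. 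Hence the non-resonance condition \eqref{eq:condition_dv} guarantees $|v_k|\geq v_{min}>\Delta x(\chi_{\chS}+\chi_{\chN})$ to exceed this bound for every $k$, which is exactly strict diagonal dominance, and therefore $Q_{j-1/2}$ is invertible and \eqref{eq:S_productApprox} is well defined.

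The step I expect to require the most care is the bookkeeping of the rank-one correction: one must remember that the integral term also lowers the diagonal coefficient by $\frac{\Delta x}{2}\omega_k T_{j-1/2}(v_k)$, so that it is the \emph{differences} $T_{j-1/2}(v_\ell)-T_{j-1/2}(v_k)$, and not the values $T_{j-1/2}(v_\ell)$ themselves, that enter the dominance estimate. It is precisely this cancellation that turns the crude bound $1+\chi_{\chS}+\chi_{\chN}$ into the spectral gap $\chi_{\chS}+\chi_{\chN}$, thereby reproducing the sharp threshold \eqref{eq:condition_dv}. As a sanity check one can instead apply the matrix determinant lemma to the diagonal-plus-rank-one form of $Q_{j-1/2}$: writing $d_k=|v_k|+\frac{\Delta x}{2}T_{j-1/2}(v_k)$, this yields $\det Q_{j-1/2}=\bigl(\prod_k d_k\bigr)\bigl(1-\sum_k \tfrac{\Delta x}{2}\omega_k T_{j-1/2}(v_k)/d_k\bigr)$, and the sum is strictly below $\sum_k\omega_k=1$ because $|v_k|>0$. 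This confirms invertibility through an independent route, and incidentally shows that \eqref{eq:condition_dv} is sufficient but not necessary for invertibility alone.
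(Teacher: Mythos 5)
Your proof is correct and takes essentially the same route as the paper: invertibility via strict diagonal dominance (Levy--Desplanques), with the same key cancellation whereby the diagonal excess $\tfrac{\Delta x}{2}(1-\omega_k)T_{j-1/2}(v_k)$ is paired against the off-diagonal row sum so that only the differences $T_{j-1/2}(v_\ell)-T_{j-1/2}(v_k)$, bounded in modulus by $2(\chi_{\chS}+\chi_{\chN})$, survive; this is exactly the paper's estimate $\Gamma_k \geq \min_k(|v_k|)-\Delta x\,(\chi_{\chS}+\chi_{\chN})$, and your entry bookkeeping (off-diagonal entries $-\tfrac{\Delta x}{2}\omega_\ell T_{j-1/2}(v_\ell)$) even corrects a small typo in the paper's displayed row-sum, where $T(v_k)$ appears in place of $T(v_\ell)$. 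The one genuine addition is your matrix-determinant-lemma check, which the paper does not contain: it gives $\det Q_{j-1/2}=\bigl(\prod_k d_k\bigr)\bigl(1-\sum_k \tfrac{\Delta x}{2}\omega_k T_{j-1/2}(v_k)/d_k\bigr)>0$ unconditionally, since each summand is strictly smaller than $\omega_k$ whenever $|v_k|>0$; this proves more than the lemma asserts, namely that \eqref{eq:condition_dv} is sufficient but not necessary for bare invertibility, which is consistent with Section~\ref{sec:sim_condS}, where the non-resonance condition is motivated by conditioning of the linear system (avoiding ``too slow particles'') rather than by singularity of $Q$.
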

\begin{proof}
The matrix $Q=(q)_{m,n}$, $m,n\in\{1,...,2\nG\}$ is invertible if it is strictly diagonally dominant, that is, for each row $m\in\{1,...,2\nG\}$ the following must hold
\begin{displaymath}
   |q_{m,m}|-\sum_{n=1, n\neq m}^{2\nG}|q_{m,n}|>0. 
\end{displaymath}
Using the explicit formula for $Q$ the above condition becomes
\begin{align*}
  \Gamma_k
  =\left||v_k|+\frac{\Delta x}{2}T(v_k)-\frac{\Delta x}{2}\omega_k T(v_k)\right|
   -\frac{\Delta x}{2}\left(\sum_\lsum|\omega_\ell T(v_k)|-|\omega_kT(v_k)|\right).
\end{align*}
Since $T>0$ and $\omega_k\leq 1$ we have $T(v_k)-\omega_k T(v_k)\geq 0$, so omitting the moduli yields
\begin{align*}
   \Gamma_k & = |v_k| + \frac{\Delta x}{2}\sum_\lsum\omega_\ell T (v_k)-\frac{\Delta x}{2}\sum_\lsum\omega_\ell T(v_\ell)\\ 
   & = |v_k| + \frac{\Delta x}{2}\sum_\lsum\omega_\ell\left(T(v_k)-T(v_\ell)\right) \geq \min_k(|v_k|) -  \Delta x\cdot \left(\chi_{\chS}+\chi_{\chN}\right). 
\end{align*}
\end{proof}
%

\subsection{Well-balanced schemes for reaction-diffusion equations}

Well-balanced discretizations for linear diffusive equations including lower-order terms were recently introduced in \cite{Gosse_Lsplines}, extending previous works mostly devoted to stationary models. In particular, it was shown that in the vanishing viscosity limit, usual well-balanced schemes for the remaining hyperbolic equations were recovered.

As both the parabolic equations showing up in (\ref{para-S-N}) are very similar, the treatment of a generic dissipative diffusion-reaction model for a generic unknown $u(t,x)$ will be presented hereafter.
We seek a numerical approximation of Cauchy problem,
\begin{equation}\label{eq:par_general}
\Dt u - D \Dxx u + p(x) u = q(t,x)
\end{equation}
with initial and boundary data, so that the resulting scheme recovers the collection of points $u(x_j)$, for $x_j$ being the nodes of the grid. The main idea of the scheme is to derive numerical flux functions using the properties of the ''$\EL$-spline'' interpolation of the data $u_j^n$ that is solving the stationary problem of \eqref{eq:par_general} in $(x_{j-1},x_j)$
\begin{equation}\label{eq:par_steady}
-D\Dxx v + pv=q
\end{equation}
and imposing the $C^1$ regularity conditions at interfaces. In $(x_{j-1},x_{j})$, both $p$ and $q$ are assumed to be constant, each ''local profile'' is obtained by a standard variation of constants technique involving exponential functions. 
\begin{itemize}
\item Let $r_{j-\frac 1 2}=\sqrt{{p_{j-1/2}}/{D}}>0$ in $(x_{j-1},x_j)$ and $\bar{q}_{j-\frac 1 2}=q_{j-\frac 1 2}/p_{j-\frac 1 2}$. Exponentials $\{\exp(r_{j-\frac1 2}x),\exp(-r_{j-\frac1 2}x)\}$ $\left(\textrm{resp. }\{\exp(r_{j+\frac 1 2}x),\exp(-r_{j+\frac1 2}x)\}\right)$ form a fundamental basis for an operator $\EL:v\rightarrow-D\Dxx v+pv$ in $(x_{j-1},x_j)$ $\left(\textrm{resp. }(x_{j},x_{j+1})\right)$. Let $v$ be a steady state solution to \eqref{eq:par_general}. Then in $(x_{j-1},x_j)$ it is written as
\begin{equation}
  v(x) = \left<
  \left( \begin{array}{c} A \\ B \end{array}\right),
  \left( \begin{array}{c} \exp(r_{j-\frac 1 2} x) \\ \exp(-r_{j-\frac 1 2} x) \end{array}\right)
  \right>+\bar{q}_{j-\frac 1 2},
\end{equation}
where $\left<\cdot,\cdot\right>$ denotes the canonical scalar product and $A,B$ are the inegration constants. Solutions to \eqref{eq:par_steady} at $x_{j-1},x_{j}$ rewrite as
\begin{equation}\label{eq:par_steady_solution}
\left( \begin{array}{c}
v_{j-1}^n - \bar q_{j-\frac 1 2} \\ v_j^n - \bar q_{j-\frac 1 2} 
\end{array}\right) =\underbrace{\left( \begin{array}{cc}
\exp(r_{j-\frac 1 2} x_{j-1}) & \exp(-r_{j-\frac 1 2} x_{j-1}) \\
\exp(r_{j-\frac 1 2} x_{j}) & \exp(-r_{j-\frac 1 2} x_{j}) \\
\end{array}\right)}_{Z_{j-\frac 1 2}}
\left( \begin{array}{c}
A \\ B \end{array}\right).
\end{equation}
The determinant $|Z_{j-\frac 1 2}|= -2\sinh(r_{j-\frac 1 2} \DX)\not =0$, so the matrix is invertible.
\item A solution $v_j$ belongs to a unique stationary curve defined on $(x_{j-1},x_{j+1})$, such that $u(x_{j\pm 1})=v_{j\pm 1}$, if at the node $x_j$ the $C^1$ regularity is assured.  Inside $(x_{j-1},x_j)$ we have
\begin{equation}
  v'(x) = \left<
  \left( \begin{array}{c} A \\ B \end{array}\right),
  r_{j-\frac 1 2} \left( \begin{array}{c} \exp(r_{j-\frac 1 2} x) \\ -\exp(-r_{j-\frac 1 2} x) \end{array}\right)
  \right>,
\end{equation}
so that, the $C^1$ smoothness at $x_j$ reads,
\begin{align}\label{eq:par_c1condition}
R_{j-\frac 1 2}^n:=& r_{j-\frac 1 2}\left<Z_{j- \frac 1 2}^{-1}\left( \begin{array}{c}
v_{j-1}^n - \bar q_{j-\frac 1 2} \\ v_j^n - \bar q_{j-\frac 1 2} 
\end{array}\right), \left( \begin{array}{c}
\exp(r_{j-\frac 1 2} x_j) \\ -\exp(-r_{j-\frac 1 2} x_j)
\end{array}\right) \right> \\
 =&r_{j+\frac 1 2}\left<Z_{j+ \frac 1 2}^{-1}\left( \begin{array}{c}
v_{j}^n - \bar q_{j+\frac 1 2} \\ v_{j+1}^n - \bar q_{j+\frac 1 2} 
\end{array}\right), \left( \begin{array}{c}
\exp(r_{j+\frac 1 2} x_j) \\ -\exp(-r_{j+\frac 1 2} x_j)
\end{array}\right) \right>=:L_{j+\frac 1 2}^n, \nonumber
\end{align}
where relation \eqref{eq:par_steady_solution} was used to replace the integration constants $A,B$.
\end{itemize}
The time-marching strategy consists in defining the discrete time derivative as the defect of $C^1$ smoothness at each $x_j$, that is, a difference of normal derivatives,
\begin{equation}\label{normal}
u_{j}^{n+1} = u_{j}^{n}-\frac{D\DT}{\DX}\left[L_{j+\frac 1 2}^n-R_{j-\frac 1 2}^n\right],
\end{equation}
where $L_{j+\frac 1 2}$ $\left(\textrm{resp. }R_{j-\frac 1 2}\right)$ is the right (resp. left) hand side of \eqref{eq:par_c1condition}. By developing these terms, the scheme for the time evolution of concentrations $\chS$, $\chN$ rewrites as
\begin{subequations}\label{eq:scheme_parabolic}
\begin{itemize}
\item signal $\chS$:  $p=\alpha$, $q = \beta\rho$, $r_{j+1/2}=\sqrt{{p}/{D}}\equiv r$ 
  \begin{align}
 \nonumber   \chS_{j}^{n+1} = \chS_{j}^{n} + \frac{\Delta t\, \sqrt{\alpha D_{\chS}}}{\Delta x\sinh(r\Delta x)}\left\{ \Big(\chS_{j+1}^{n}-2\cosh(r\Delta x)\chS_j^n+\chS_{j-1}^{n}\Big) \right.\\
    +\left. \frac{\beta}{\alpha}\frac{\cosh(r\Delta x)-1}{\sinh(r\Delta x)}\left(\rho^n_{j+1/2}+\rho^n_{j-1/2}\right) \right\},
  \end{align}  
where, presently, $\rho^n_{j \pm 1/2}$ can be defined as an arithmetic average.
\item nutrient $\chN$: $p = \gamma\rho$, $q=0$, $r_{j+1/2}=\sqrt{{\gamma\rho_{j+1/2}}/{D_{N}}}$
  \begin{align}
\nonumber    \chN_{j}^{n+1} = \chN_{j}^{n} +& \frac{\Delta t\, \sqrt{\gamma D_{\chN}}}{\Delta x}\left\{\frac{\sqrt{\rho_{j+1/2}}}{\sinh(r_{j+1/2}\Delta x)}\left( \chN_{j+1}^{n} -\cosh (r_{j+1/2}\Delta x) \chN_{j}^n \right) \right.\\
   & \left. -\frac{\sqrt{\rho_{j-1/2}}}{\sinh(r_{j-1/2}\Delta x)}\left( \cosh(r_{j-1/2}\Delta x)\chN_{j}^{n} - \chN_{j-1}^{n}\right)    \right\}.
  \end{align}  
\end{itemize}
\end{subequations}
Consistency is established like in \cite[Theorem 6.2]{Gosse_Lsplines}, essentially by performing Taylor expansions in every hyperbolic trigonometric function while sending $\DX \to 0$. The scheme is just a linear 3-point space-discretization, although it is ``exponential-fit'' like Scharfetter-Gummel's, so that implementing a Crank-Nicolson ($\theta$-method with $\theta=\frac 1 2$) time-integration is easy and produces second order accuracy. 

\subsection{Approximation of material derivatives}\label{sec:tumbling}
The scattering matrix, evaluated at each time-step and interface, requires a good approximation of material derivatives (\ref{tumbling}) inside the ``sign'' functions. We present here two methods, for which accuracy is addressed numerically in the next sections. For brevity,  only the concentration of $\chS$ is considered, as identical formulas apply to $N$ as well:
\begin{itemize}
\item Method MD-1: One of the choices to approximate $D\chS/Dt|_{j+1/2}^{n}$ is to define a piecewise constant approximation of $\chS(t,x)$ centered at the grid nodes,
\begin{displaymath}
  \forall j,n\in\mathbb{Z}\times\mathbb{N},\quad \chS_{j}^{n}=\chS(t^n=n\Delta t,x_j=j\Delta x),
\end{displaymath}
and use the definition of the material derivative $D\chS/Dt = \partial_t \chS+v\partial_x \chS$ with 
\begin{subequations}\label{eq:md1}
\begin{align}
  (\Dx\chS )_{j+1/2}^n &:= \frac{\chS_{j+1}^n-\chS_{j}^n}{\Delta x},\\
  (\Dt\chS )_{j+1/2}^n &:= \frac{1}{2}\left(\frac{\chS_{j+1}^n-\chS_{j+1}^{n-1}}{\Delta t}+\frac{\chS_{j}^n-\chS_{j}^{n-1}}{\Delta t}\right).&&
\end{align}
\end{subequations}
The space derivative is well defined at interfaces, but, not the approximation of the time derivative:  averaging might introduce additional errors.
\item Method MD-2:  Another way is to approximate directly the material derivative  
\begin{equation}
\frac{D\chS}{Dt}(x,t) = \frac{1}{\Delta t}(\chS(x,t)-\chS(x-v\Delta t,t-\Delta t)).
\end{equation}
The CFL condition \eqref{eq:cfl} assures that $x-v\Delta t\in(0,\Delta x)$. This definition is more coherent with the behavior of bacteria which  measure variations of  concentration of a chemical along their trajectory. As $DM/Dt$ has to be defined at the interfaces,  it becomes natural to define also $\chS$ on the interfaces, instead of on the grid nodes,
\begin{displaymath}
  \forall j,n\in\mathbb{Z}\times\mathbb{N},\quad \chS_{j}^{n}=\chS(t^n=n\Delta t,x_j=(j+1/2)\Delta x).
\end{displaymath}
Using the linear combination of values at the cell boundaries with the upwinding with respect to the velocity to approximate $M(x-v\DX,t-\DT)$ we obtain
\begin{equation}\label{eq:md2}
  \left.\Delta t\frac{D\chS}{Dt}\right|_{j+1/2}^{n} = \chS_{j+1/2}^{n}-
  \begin{cases}
\displaystyle  \left(1-\Delta t\frac{v}{\Delta x}\right)\chS_{j+1/2}^{n-1}+\Delta t\frac{v}{\Delta x}\chS_{j-1/2}^{n-1}&\text{if $v>0$}\medskip\\
\ds  \left(1+\Delta t\frac{v}{\Delta x}\right)\chS_{j+1/2}^{n-1}-\Delta t\frac{v}{\Delta x}\chS_{j+3/2}^{n-1}&\text{if $v<0$}
  \end{cases}
\end{equation}
\end{itemize}
\begin{rmk}
We note that using the well-balanced approximation of the parabolic equations \eqref{para-S-N} allows to avoid additional approximations in the method MD-2. More precisely, if the concentrations of the signal $\chS$ and the nutrient $\chN$ are computed at interfaces, then the values of $\rho_{j+1/2}$ in the scheme \eqref{eq:scheme_parabolic} coincide with the grid nodes.  
 \end{rmk} 

\subsection{Simple centered, time-splitting (TS) approach}
For comparison purposes, we present an alternative, standard scheme based on time-splitting. A main difference with respect to well-balanced techniques lies in a ``time localization'' of transport and tumbling terms (see \cite{Gosse_M3AS}). Processes are separated, so that the scheme splits into two distinct phases. In case of the kinetic equation \eqref{kinetic}-(\ref{tumbling}), we have the following discretization:
\begin{enumerate}
\item transport with velocity $v_k$ is solved by the classical upwind algorithm,
\begin{subequations}\label{eq:ts_kinetic}
\begin{align}
 \nonumber f_{j}^{*}(|v_{k}|) &= f_{j}^{n}(|v_{k}|)-|v_{k}|\frac{\Delta t}{\Delta x}\left(f_{j}^{n}(|v_{k}|)-f_{j-1}^{n}(|v_{k}|)\right),\\
  f_{j}^{*}(-|v_{k}|) &= f_{j}^{n}(-|v_{k}|)+|v_{k}|\frac{\Delta t}{\Delta x}\left(f_{j+1}^{n}(-|v_{k}|)-f_{j}^{n}(-|v_{k}|)\right),
\end{align}
It corresponds to scheme \eqref{god} with the identity as the scattering matrix. 
\item tumbling is an ordinary differential equation solved by explicit integration,
\BE
f_{j}^{n+1}(v_k) = (1-\Delta t T_{j}^{n}(v_k))f_{j}^{*} + \Delta t\sum_{l=1}^{2\nG}\omega_l T^{n}_{j}(v_l)f^{*}_{j}(v_l),
\EE
\end{subequations}
where $T_j(\cdot)=T(x_j,\cdot)$ is computed at each node, hence the word ``centered''.
\end{enumerate}
A similar method for a generic reaction-diffusion equation \eqref{eq:par_general} reads,
\begin{equation}\label{eq:ts_parabolic}
u^{n+1}_j=u^n_j + \frac{D\Delta t}{\Delta x}\left(\left[\frac{u^n_{j+1}-u^n_j}{\DX}\right]- \left[\frac{u_j^n-u^n_{j-1}}{\DX}\right] \right)- \DT( p_j\, u^n_j - q^n_j).
\end{equation}
As usual, the CFL restriction for linear stability is:
$$
\left(\frac{2D}{\DX^2} -\|\max(0,p)\|_\infty \right)\DT \leq  1.
$$
Such discretization of diffusive terms corresponds to (\ref{normal}), where $q=p=0$ is forced into  (\ref{eq:par_general}). Corresponding $\mathcal L$-spline interpolation reduces to piecewise-linear, because $\mathcal L$ is just the second derivative, $v \mapsto -D\Dxx v$, which fundamental system is $\{1,x\}$.


\section{First numerical assessments}\label{sec:sim1}

We study the accuracy of numerical schemes presented in the previous section. First, we focus on analyzing the properties of two types of scattering matrices: one based on Case's elementary solutions and another derived from a finite difference approximation, see Section~\ref{sec:S_case} and \ref{sec:S_approx} respectively. Then, we compare how different numerical approaches resolve the momentum of a travelling waves  used to compute its mean velocity. In particular, we compare:
\begin{itemize}
\item WB-WB: well-balanced for both kinetic \eqref{kinetic} and parabolic \eqref{para-S-N} equations
\item WB-TS: well-balanced for kinetic equation, time-splitting for parabolic ones
\item TS-TS: time-splitting for both kinetic and parabolic equations
\end{itemize}
and two possible approximations of the material derivative, described in Section~\ref{sec:tumbling}: one based on definition  (MD-1), another using upwinding (MD-2), see Table~\ref{tab:schemes}.

\begin{table}
\begin{center}
\begin{tabular}{|c|c|c|}
\hline 
                                  & WB & TS\\
\hline\hline 
Kinetic equation~\eqref{kinetic}  & \eqref{eq:outgoing_incoming}-\eqref{god}-\eqref{eq:expansion_cases}&\eqref{eq:ts_kinetic}\\
Parabolic system~\eqref{para-S-N} & \eqref{eq:scheme_parabolic} & \eqref{eq:ts_parabolic}\\
\hline\hline 
& MD - 1 & MD - 2 \\
\hline\hline 
Tumbling operator \eqref{tumbling} & \eqref{eq:md1} & \eqref{eq:md2}\\
\hline
\end{tabular}
\end{center}
\caption{Reference (names and equations) for the numerical methods used in the simulations.} 
\label{tab:schemes}
\end{table}

\subsection{General setting}

If not otherwise specified, system \eqref{eq:model}-(\ref{para-S-N}) is posed on $[0,L]\times[-1,1]$ with specular boundary conditions for the kinetic equation, and 
\begin{displaymath}
\partial_x \chS(t,x)|_{x=0}=\partial_x \chS(t,x)|_{x=L}=0,\quad \partial_x \chN(t,x)|_{x=0}=0, \chN(t,x=L) = \bar{\chN}
\end{displaymath}
for the parabolic system, where $\bar{\chN}$ is an arbitrary positive constant. The macroscopic density and velocity are approximated using the quadrature: for all $j \in \Ze,\, n \geq 0$,
\begin{subequations}\label{eq:speed}
\begin{equation}
\rho_j^n =\sum_{\ksum}\omega_k f^n_{j,k}, \qquad u_j^n = \frac{\sum_{\ksum}\omega_kv_kf^n_{j,k}}{\sum_{\ksum}\omega_kf^n_{j,k}},
\end{equation}
while the velocity $c$ is set as the average value of a truncated macroscopic velocity,
\begin{equation}
  c = \left< u_{j}^{n}\cdot\mathbf{1}_{\rho_{j}^{n}>10\%\max_{j}({\rho_{j}^n})}\right>,
\end{equation}
\end{subequations}
where $\mathbf{1}_{A}$ stands for the indicator function of a set $A$. Such a truncation allows to avoid the influence of the numerical noise at low macroscopic densities.

\subsection{Properties of the two $S$-matrices}\label{sec:sim_condS}

The core part of the well-balanced scheme (\ref{god}) for equation \eqref{kinetic} is the scattering matrix. When it derives from finite differences \eqref{eq:Smatrix_approx}-\eqref{eq:S_productApprox}, then the minimal grid velocity must be bounded from below like (\ref{eq:condition_dv}). It forbids ''too slow particles'', which increase both the stiffness of the linear system \eqref{eq:interface_variables} and the condition number of the $S$-matrix. A high condition number yields amplification of errors present in the incoming states. $S$-matrices based on the Case's solutions \eqref{eq:expansion_cases}-\eqref{eq:S_productWB} are free from the condition (\ref{eq:condition_dv}), however, their stability for numerous discrete velocities is still not entirely clear. 
\begin{figure}[t]
  \begin{center}
    \begin{tabular}{c}
      \includegraphics[scale=0.2]{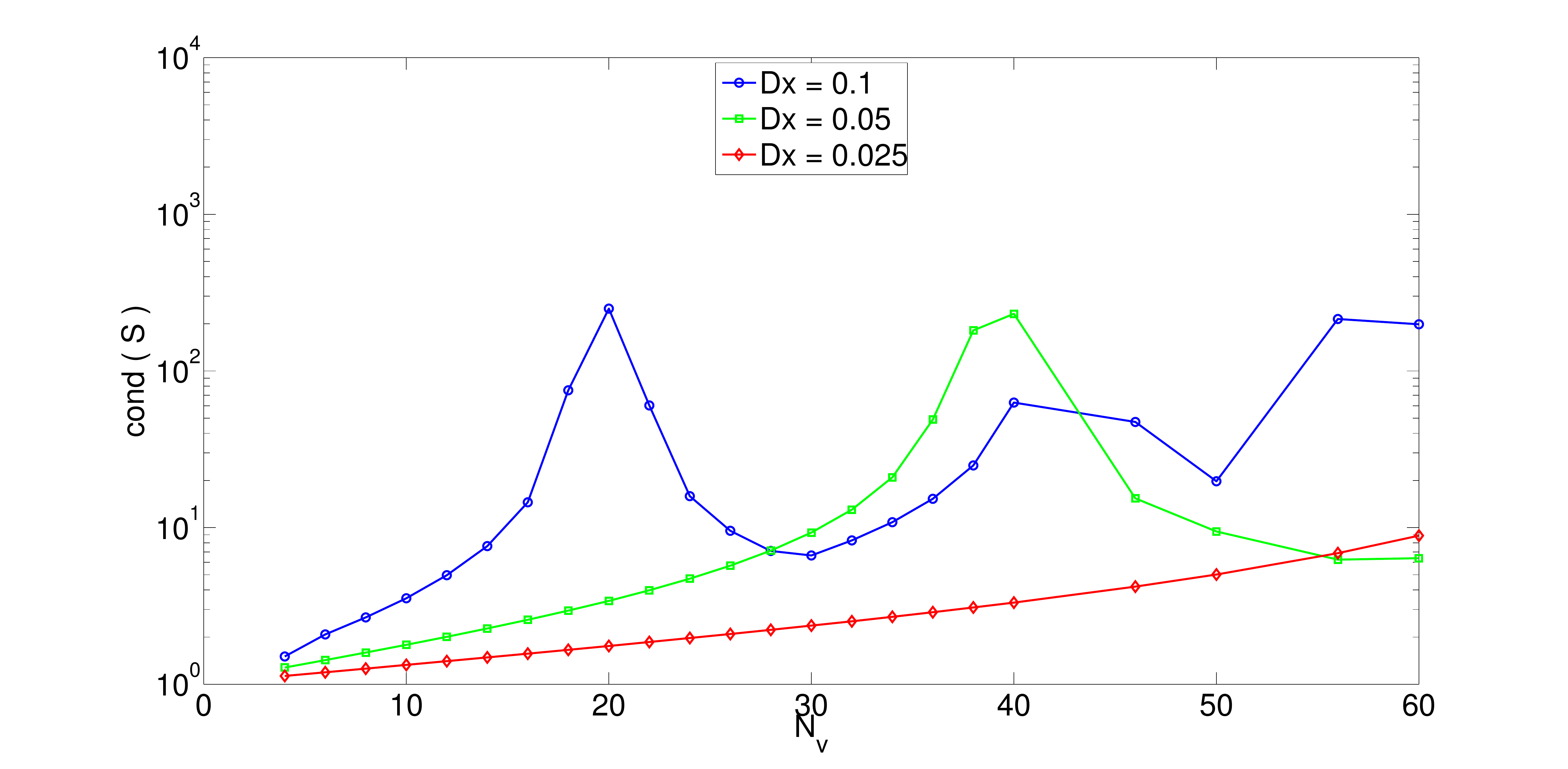}\\
      \includegraphics[scale=0.2]{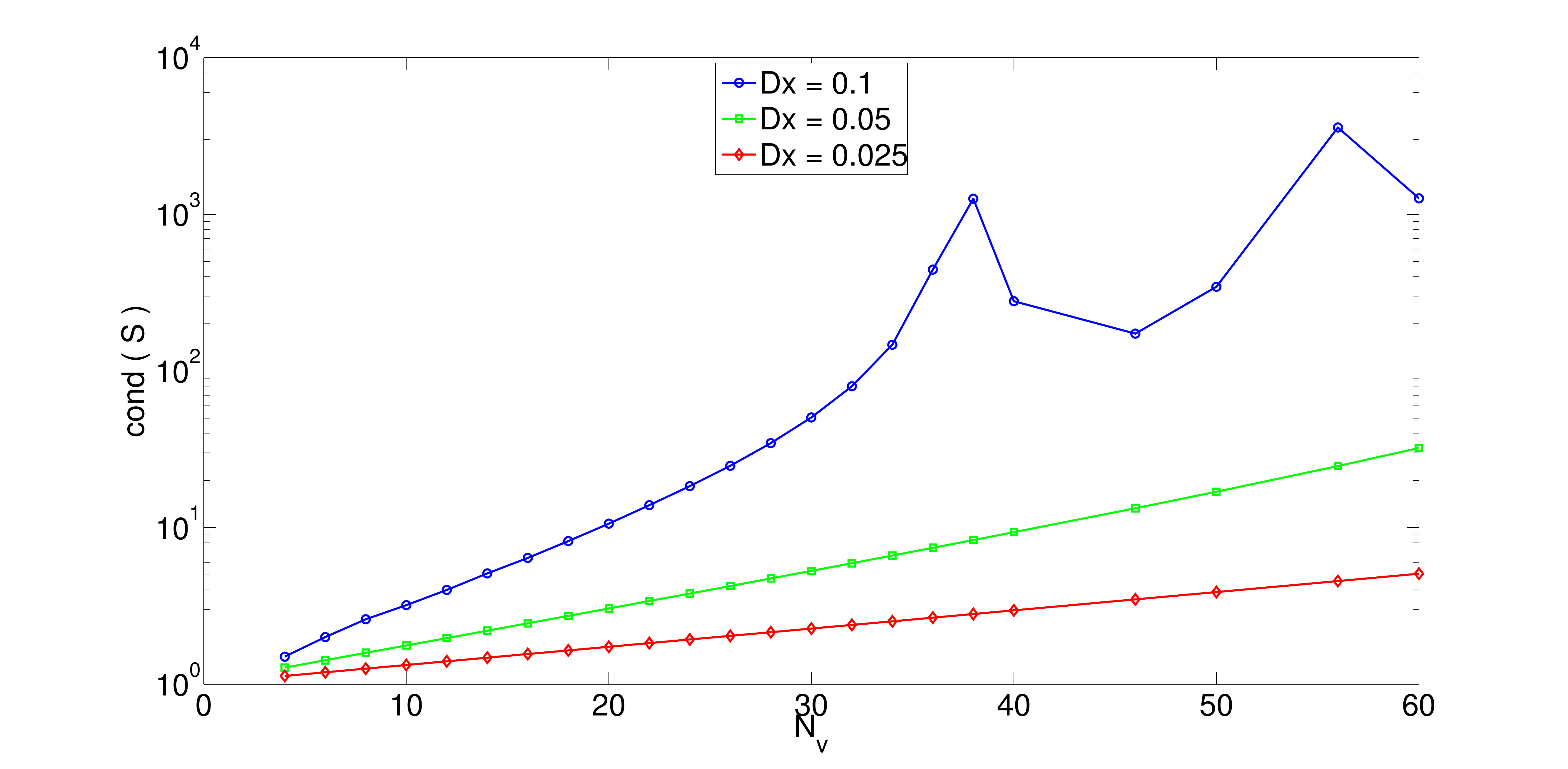}
    \end{tabular}
  \end{center}
  \caption{Condition number of  $S$-matrices as a function of $\nG$ for three grids, $\Delta x = 0.1,0.05,0.0025$.  Comparison between the scattering matrix based on finite differences \eqref{eq:S_productApprox} (top) and the scattering matrix based on the  Case's special functions \eqref{eq:S_productWB} (bottom).}
\label{fig:fullModel_condS}
\end{figure}
Figure~\ref{fig:fullModel_condS} displays condition numbers of both $S$-matrices described in Section~\ref{sec:scattering_matrix} depending on the number of points in a Gauss-Legendre quadrature for three different spatial grids, $\Delta x=0.1,0.05,0.025$. Results for the $S$-matrix based on finite differences are displayed over the ones involving Case's solutions method. In both cases, the condition number increases with the number of discrete velocities $N_v$, which implies  smaller values of $v_{\min}$. The sensitivity to slow particles moving at $v_{min}$ appears clearly weaker for $S$-matrices built on Case's solutions.

\subsection{Comparison with a time-splitting algorithm}\label{sec:sim_steady_states}

We chose two tests: approximation of the asymptotic states for the aggregation model, that is when $\chi_{\chN}=0$, and approximation of the wave speed $c$ for the full problem.

\subsubsection{Breaking the symmetry}

Without nutrient $\chN$, no travelling wave exist, so that macroscopic density  peaks symmetrically at $x=0$. Due to compensation phenomena in the velocity integral, see \cite{calvez_existence}, this maximum is produced {\it despite kinetic densities peak at slightly different locations}. Capturing efficiently such a subtle velocity repartition, see also \cite[Fig. 10.6]{Gosse_book}, is a first requisite.  
\begin{figure}[t]
    \centering
    \subfigure[\emph{Time independent signal $T=1+\chi_{\chS}\textrm{sign}(v \cdot x)$: WB-WB  (top), TS-TS (bottom)}]{
    \begin{tabular}{cc}
      \includegraphics[scale=0.11]{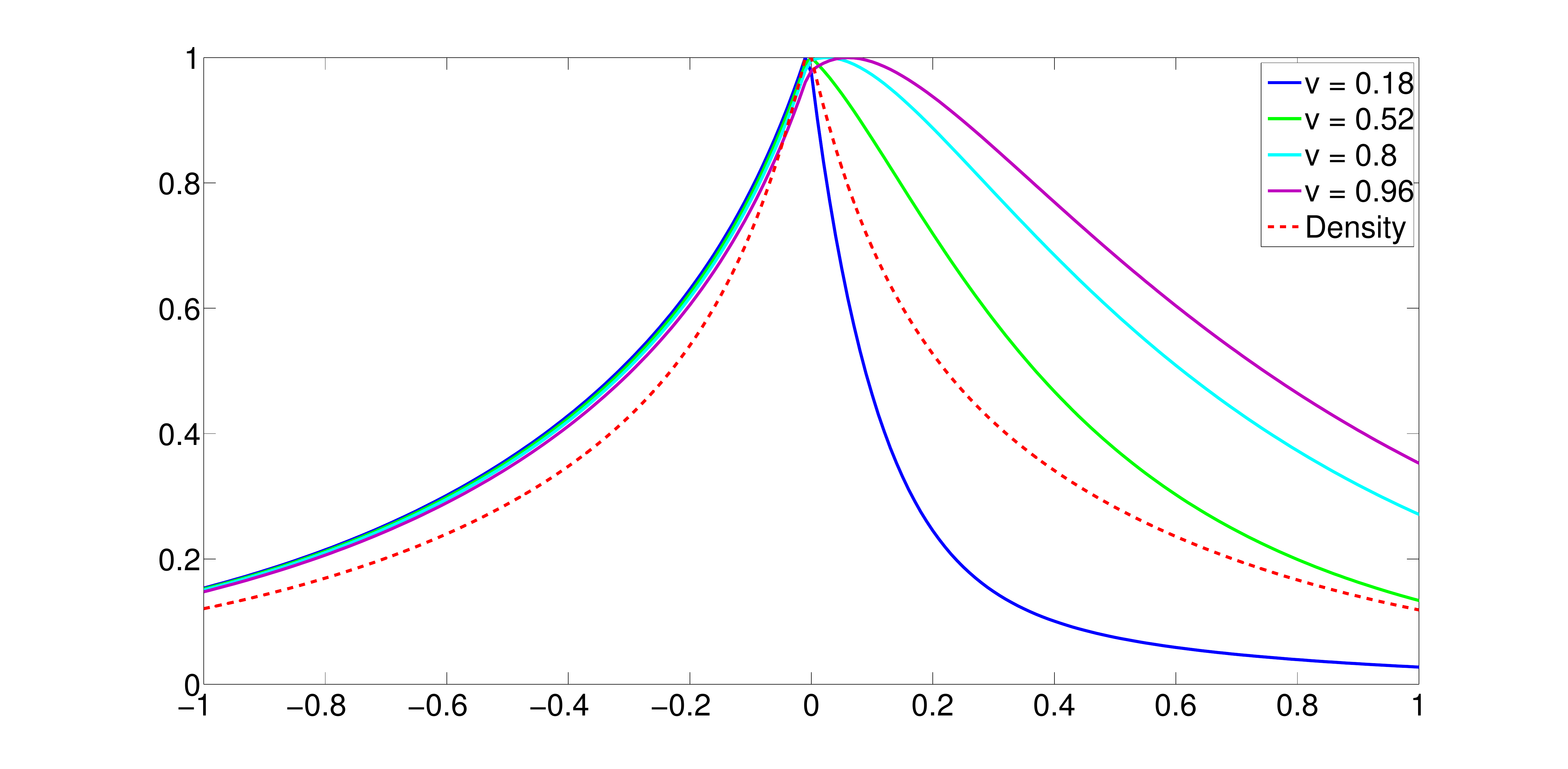}&\includegraphics[scale=0.11]{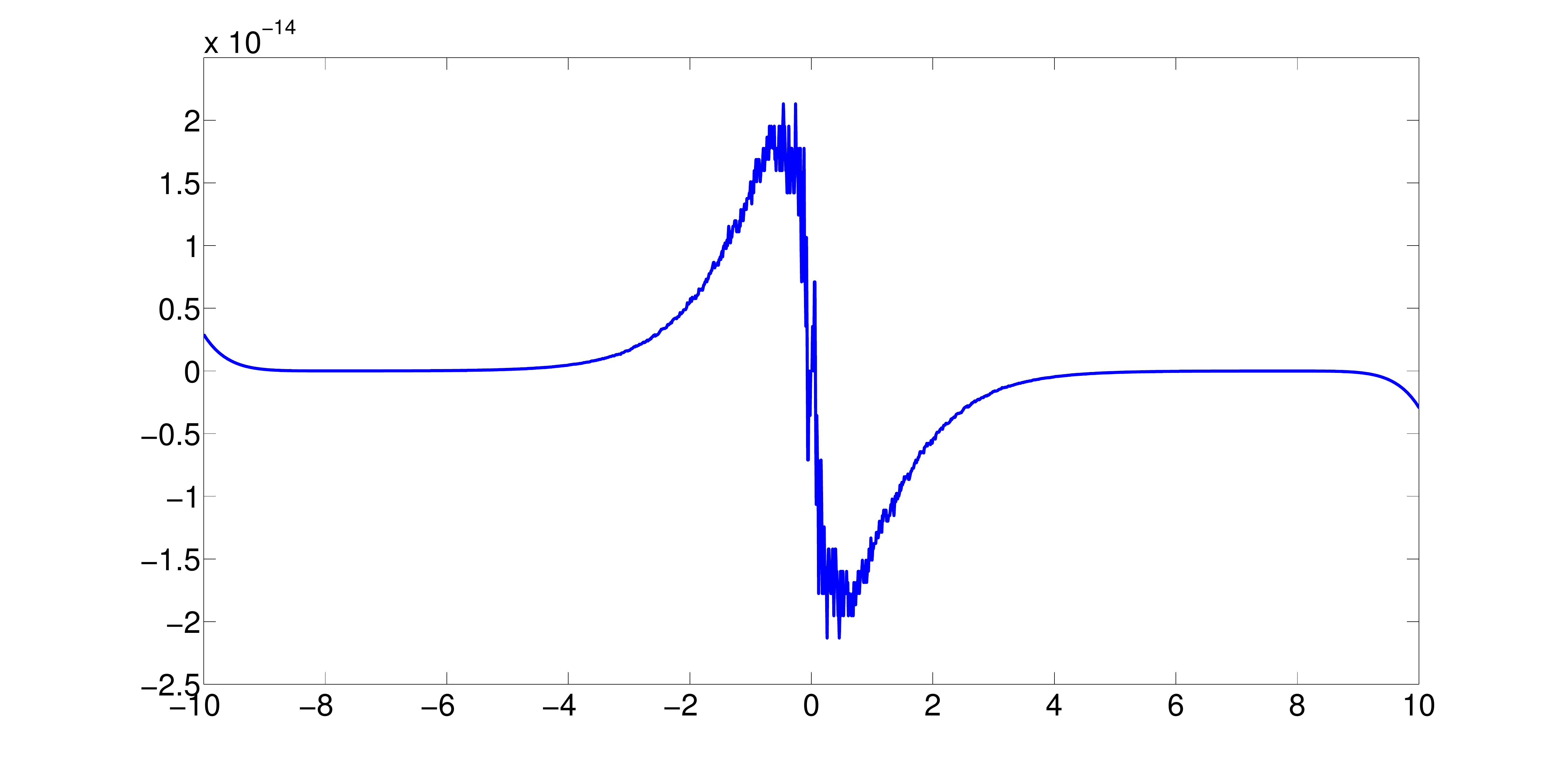}\\
      \includegraphics[scale=0.11]{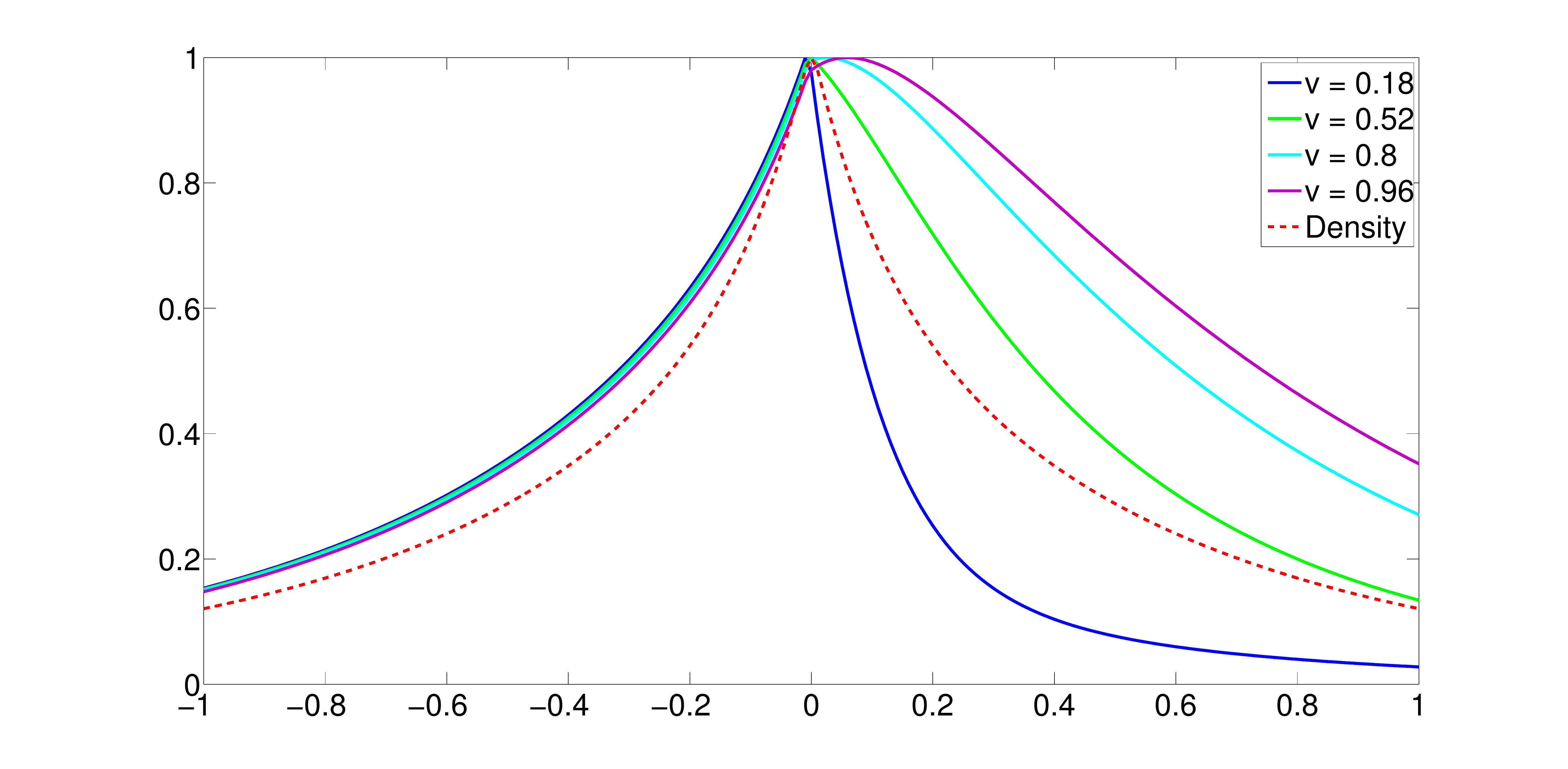}&\includegraphics[scale=0.11]{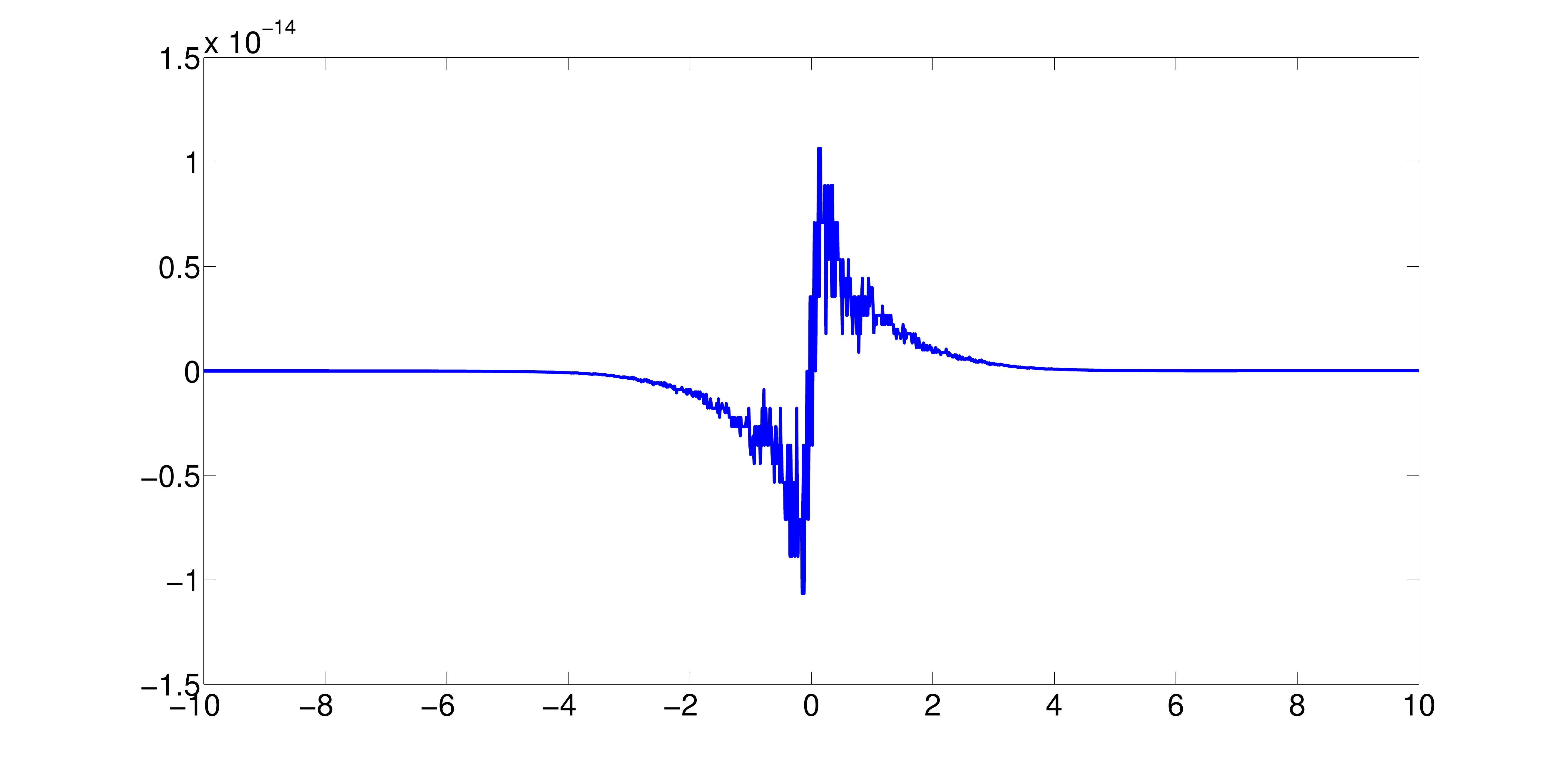}
    \end{tabular}

         \label{fig:compensation_fixedSignal}
    }
    \quad
    \subfigure[\emph{Time dependent signal, $T=1-\chi_{\chS}\textrm{sign}\left(\frac{D\chS}{DT}\right)$: WB-WB (top), TS-TS (bottom)}]{ 
   \begin{tabular}{cc}
      \includegraphics[scale=0.11]{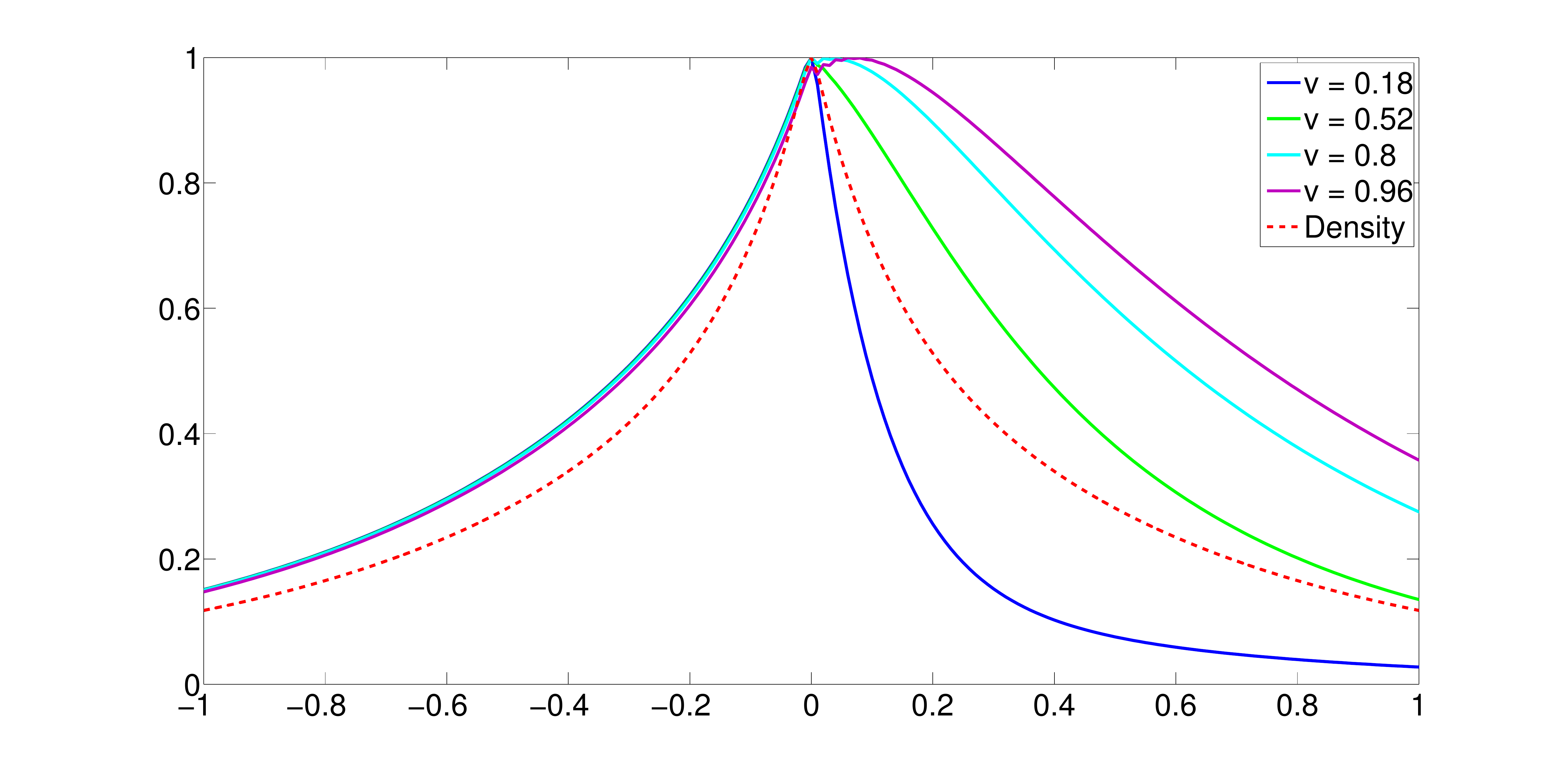}&\includegraphics[scale=0.11]{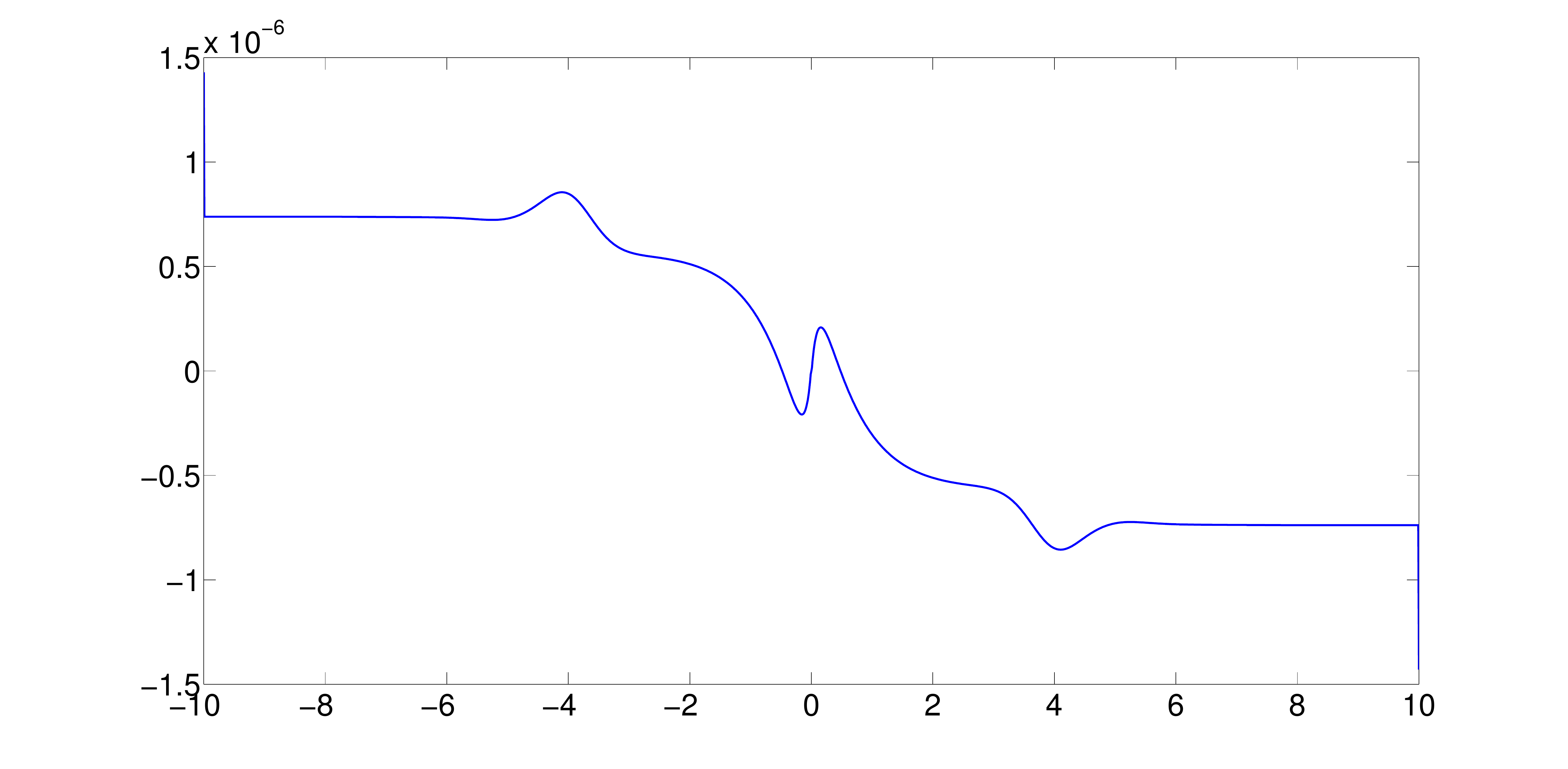}\\
      \includegraphics[scale=0.11]{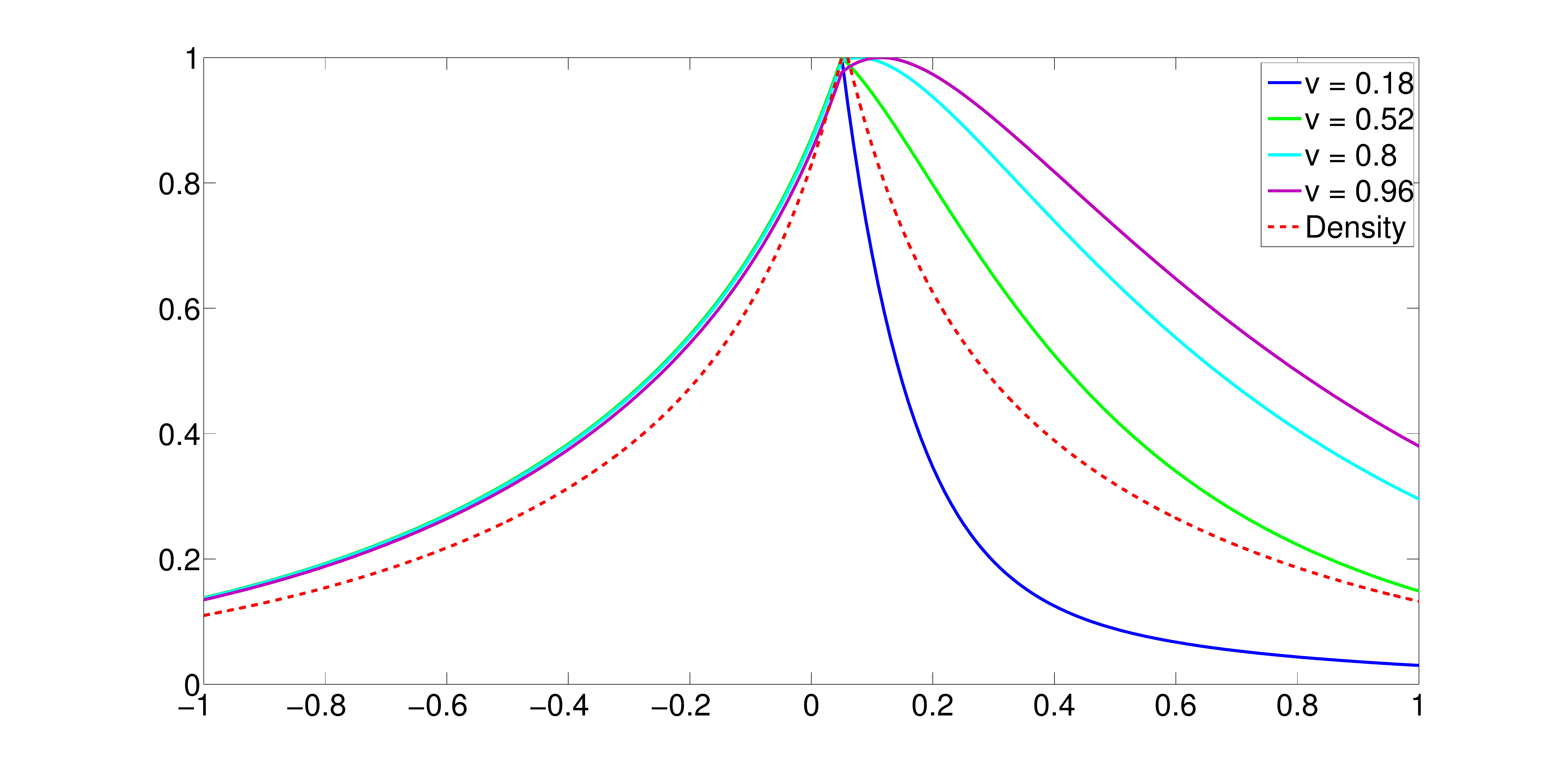}&\includegraphics[scale=0.11]{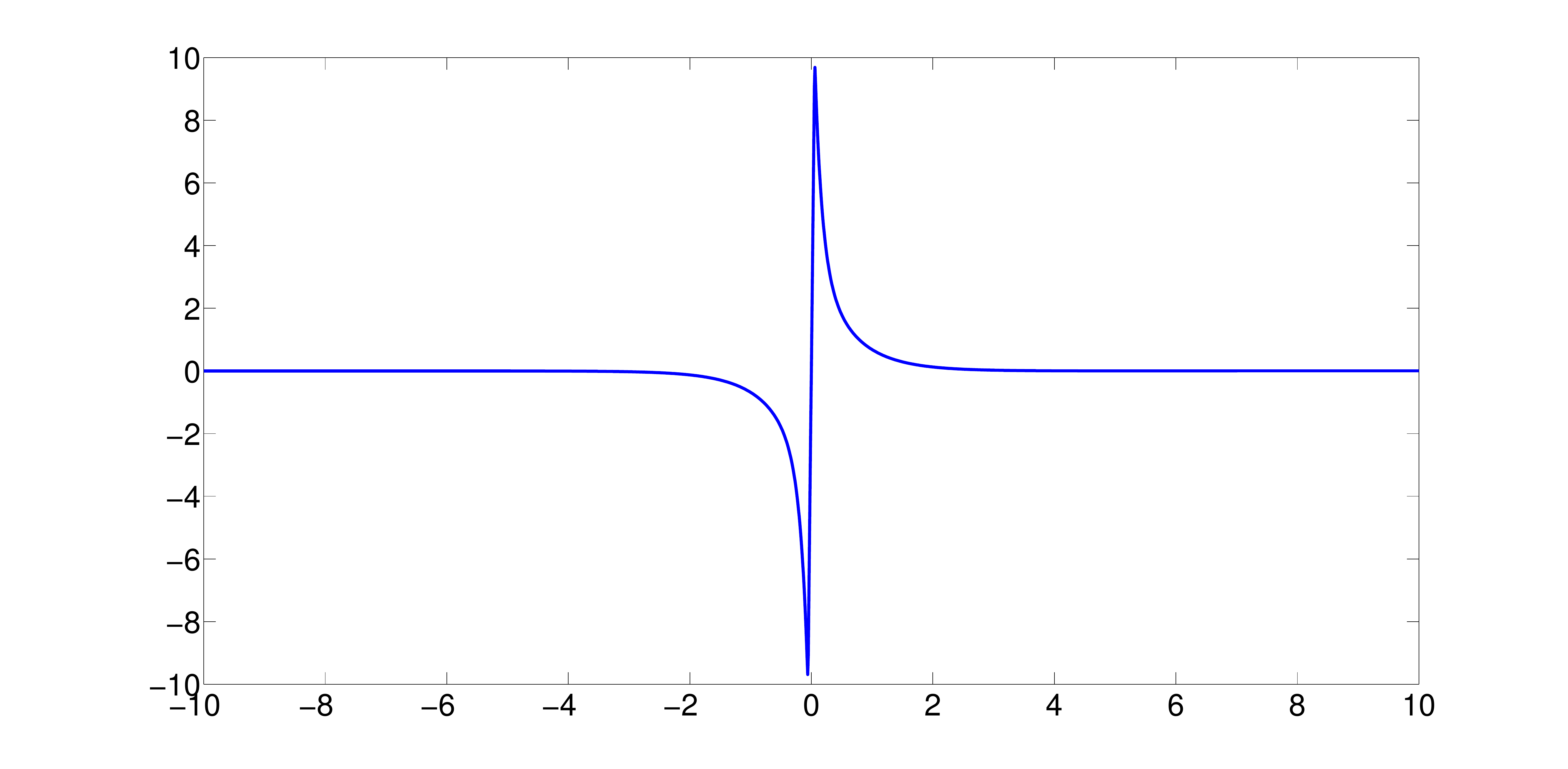}
    \end{tabular}
        \label{fig:compensation_timeSignal}
    }
    \caption{Symmetry of solutions to the aggregation model $(\chN=0)$ with $\chi_{\chS}=D_{\chS}=\alpha=\beta=1$, $f(0,x,v)=10 e^{-x^2-v^2}, \chS(0,x)=0$: (left) spatial distribution at steady-state of kinetic density $f(t,x,v)$ for positive velocities, macroscopic density $\rho(t,x)$  and (right) the error of symmetry $\Delta\rho=\rho(x)-\rho(-x)$ for WB-WB method  (top) and TS-TS (bottom) with $\nG=8$ point Gauss-Legendre quadrature and $\Delta x=0.01$.}
\label{fig:compensation}
\end{figure}
Accordingly, \eqref{eq:model} is considered with $\chi_{\chN}=0$, all parameters equal to one and initial data $f(0,x,v)=10 e^{-x^2-v^2}$. The velocity space is discretized by a $K=8$ point Gauss quadrature and $\Delta x=0.01$.  Figure~\ref{fig:compensation_fixedSignal} shows, in the left column, the kinetic density $f(t,x,v_k)$ for positive velocities (for negative velocities, profiles are symmetric) and the macroscopic density $\rho$ at steady-state for the WB-WB method (top) and TS-TS one (bottom); in the right column,  the symmetry breaking error $\Delta\rho=\rho(x)-\rho(-x)$.  For an aggregated signal, which is time-independent, i.e. $T=1+\chi_{\chS}\textrm{sign}(v\cdot x)$, both methods yield a macroscopic density correctly peaking at $x=0$ ($\Delta \rho\sim 10^{-14}$). In a time-dependent case $T=1-\chi_{\chS}\textrm{sign}\left(\frac{D\chS}{Dt}\right)$, see  Figure~\ref{fig:compensation_timeSignal}, only well-balanced gives accurate results with $\Delta\rho\sim 10^{-6}$ compared to $\Delta\rho\sim 10$ for the TS-TS method. 

\subsubsection{Approximation of the wave speed}

In biological experiments involving travelling pulses of \emph{E. coli} \cite{saragosti_mathematical_2010} bacteria were initially located at one end of the micro-channel filled with nutrient. They consumed the nutrient and moved towards its higher concentration. At some point an aggregate of bacteria was formed and traveled with a constant speed within a constant profile. In order to simulate this behavior it is necessary for a numerical scheme to be accurate enough when computing velocities, over large distances and for large times. 
We analyze the accuracy of different methods in approximating the speed of travelling waves emerging from \eqref{eq:model}  defined on $[0,L]\times[-1,1]$ with the following parameters
\[
\chi_{\chS}=0.48, \chi_{\chN}=0.44, D_{\chS}=0.5, \alpha=40, D_{\chN}=\beta=\gamma=1
\]
and the initial data
\[
f^0(x,v) = 3e^{-2x^2},\quad \chS^0(x)=0,\quad \chN^0(x)=400\left(\frac{\pi}{2}+\tanh\left(\frac{x}{3}-3\right)\right).
\]  
The simulations are performed on a mesh with $\Delta x=0.05$, $\V=\{-1,-0.5, 0.5,1\}$ and homogeneous weights $w_k=0.5$. Figure~\ref{fig:velocity_tw_formation} presents the comparison of the spatial distributions of the macroscopic velocity at time $t=100$ for MD-1 (on top) and MD-2 (on bottom) and WB-WB, WB-TS, TS-TS methods. The well-balanced method for the kinetic equation produce small oscillations in the back part of the wave, but the speed $c$ can be computed with good accuracy. The full time-splitting method (TS-TS) cannot balance correctly the macroscopic flux near the wave maximum resulting in a jump in the velocity profile. Also, our choice to approximate material derivatives through the upwinding technique (MD-2) improves significantly the accuracy. 
\begin{figure}[t]
  \begin{center}
    \begin{tabular}{c}
      \includegraphics[scale=0.2]{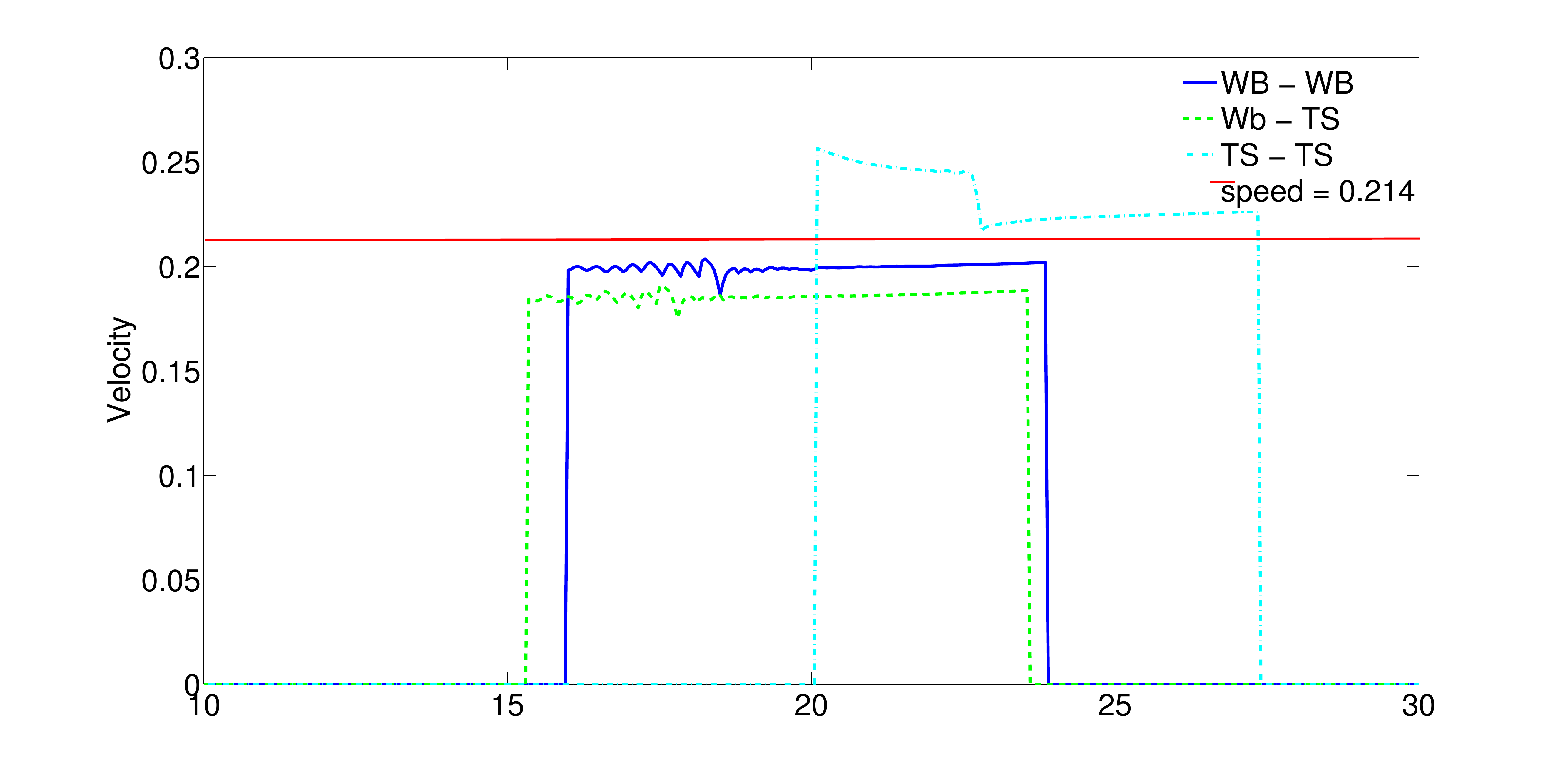}\\
      \includegraphics[scale=0.2]{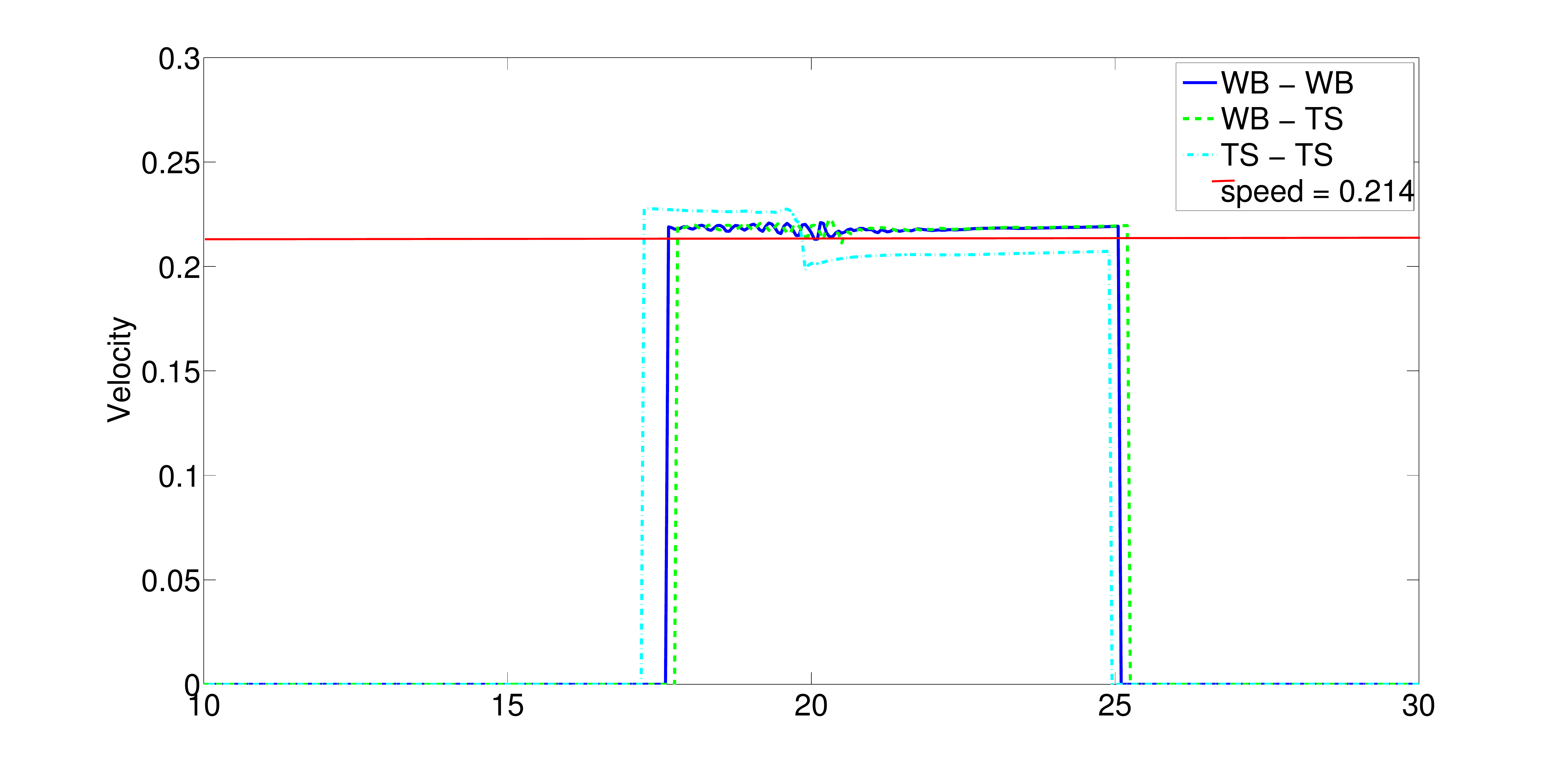}
    \end{tabular}
  \end{center}
  \caption{Comparison of the macroscopic velocities at $t=100$ obtained by different options of numerical methods, see Table \eqref{tab:schemes} for the details of the choices. (Top) Material derivative as in MD-1 \eqref{eq:md1}. (Bottom) Material derivative as in MD-2 \eqref{eq:md2}.}
  \label{fig:velocity_tw_formation}
\end{figure}

\section{Bi-stability of travelling waves}\label{sec:sim2}

Quantitative spectral analysis in the discrete velocity case \cite{calvez_existence} showed that the function $\Upsilon(c)=\partial_z\tilde{\chS}(z=0,c)$ \eqref{eq:signal_elliptic} may not be monotonically decreasing, but can have positive jumps, see \cite[Section 7]{calvez_existence}. As a consequence the condition $\Upsilon(c)=0$ can be satisfied by two different speeds $c$ corresponding to a slow and a fast wave, see Figure~\ref{fig:gammaC_vmin05}. This phenomenon depends on the velocity grid and model parameters. 

Below, we address the local stability of each of these waves. 
\begin{figure}[t]
  \begin{center}
    \includegraphics[scale=0.2]{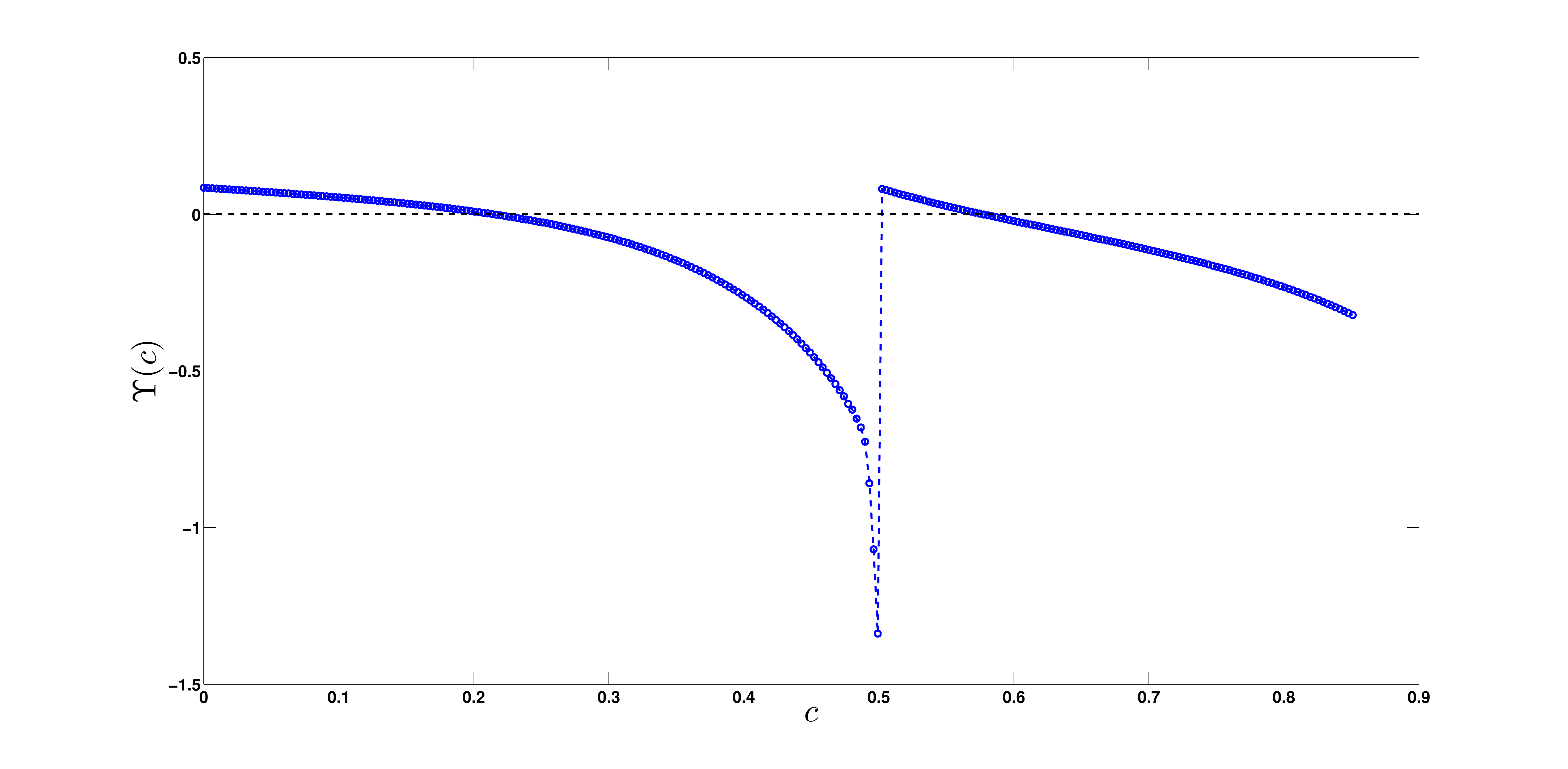}
  \end{center}
  \caption{Plot of the auxiliary function $\Upsilon(c)$ \eqref{eq:gamma_C}, for values of the parameter $c$ ranging from $0$ to $c^*$. Here,  $\V=\{-1,-0.5,0.5,1\}$, weights are uniform, and $\chi_{\chS}=0.48$, $\chi_{\chN}=0.44$, $\alpha=40$ and $D_{\chS}=0.5$. One clearly observe the co-existence of two wave speeds $c_s < c_f$ such that $\Upsilon(c_s) = \Upsilon(c_f) = 0$. Notice the positive jump at the transition $c = 0.5$ for which the problem is singular.}
  \label{fig:gammaC_vmin05}
\end{figure}
We opt for a very basic velocity set having only four values $\V=\{-1,-v_{\min},v_{\min},1\}$. We initialize the cell density profile with a stationary profile in the shifted frame $x-ct$ as solution of \eqref{eq:decoupled} for different values of parameter $c$. Initial distributions of the signal $\chS$ and the nutrient $\chN$ are the solutions to the stationary equations of the parabolic part of the model in the shifted frame.
 
Two problems are studied:
\begin{itemize}
\item For $v_{\min}=0.5$ and an appropriate choice of other parameters there exist two travelling waves, Figure~\ref{fig:gammaC_vmin05}. We show that they are both locally stable.
\item  A numerical bi-stability diagram is sought, for various values of $v_{\min}$.  
\end{itemize}

During the course of our analysis, we noticed that it is very challenging to preserve waves with high speed. Such waves are very narrow in the central part due to large values of dominant eigenmodes $\lambda$ in Case's solutions. To maintain the stability of fast waves a sufficient refinement in space, of order $\lambda^{-1}$ is required. Figure~\ref{fig:stability_dx} presents both macroscopic density and velocity of a fast wave corresponding to $v_{\min}=0.5$ at times $t=5$ and $t=30$ for two different grids $\Delta x = 0.05,\, 0.018$. The wave is not maintained on the coarse mesh.  For a fast wave corresponding to $v_{\min}=0.6$, the grid is such that $\Delta x=0.004$. 

\begin{figure}[t]
  \begin{center}
    \begin{tabular}{cc}
      \includegraphics[scale=0.1]{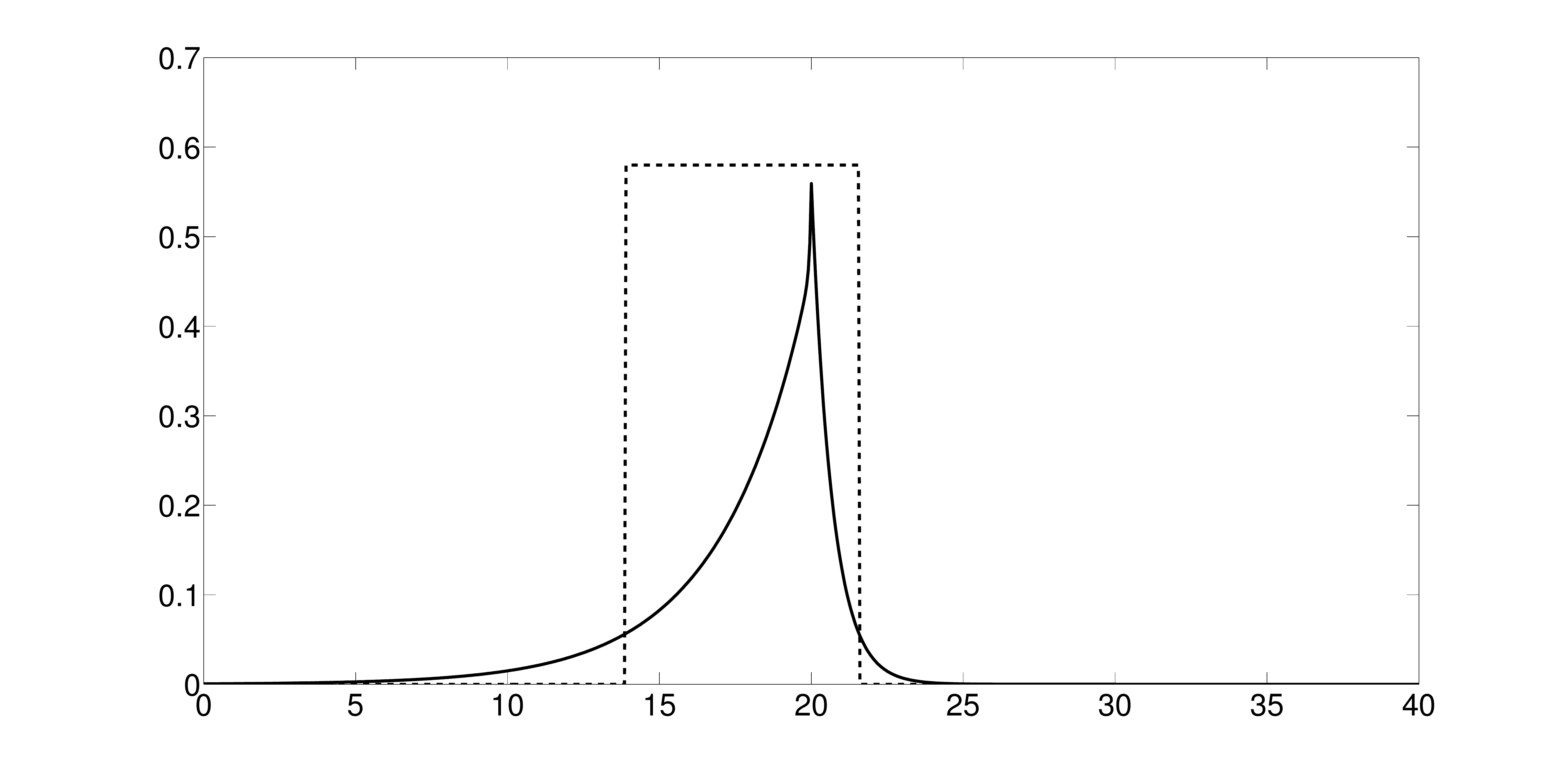} & \includegraphics[scale=0.1]{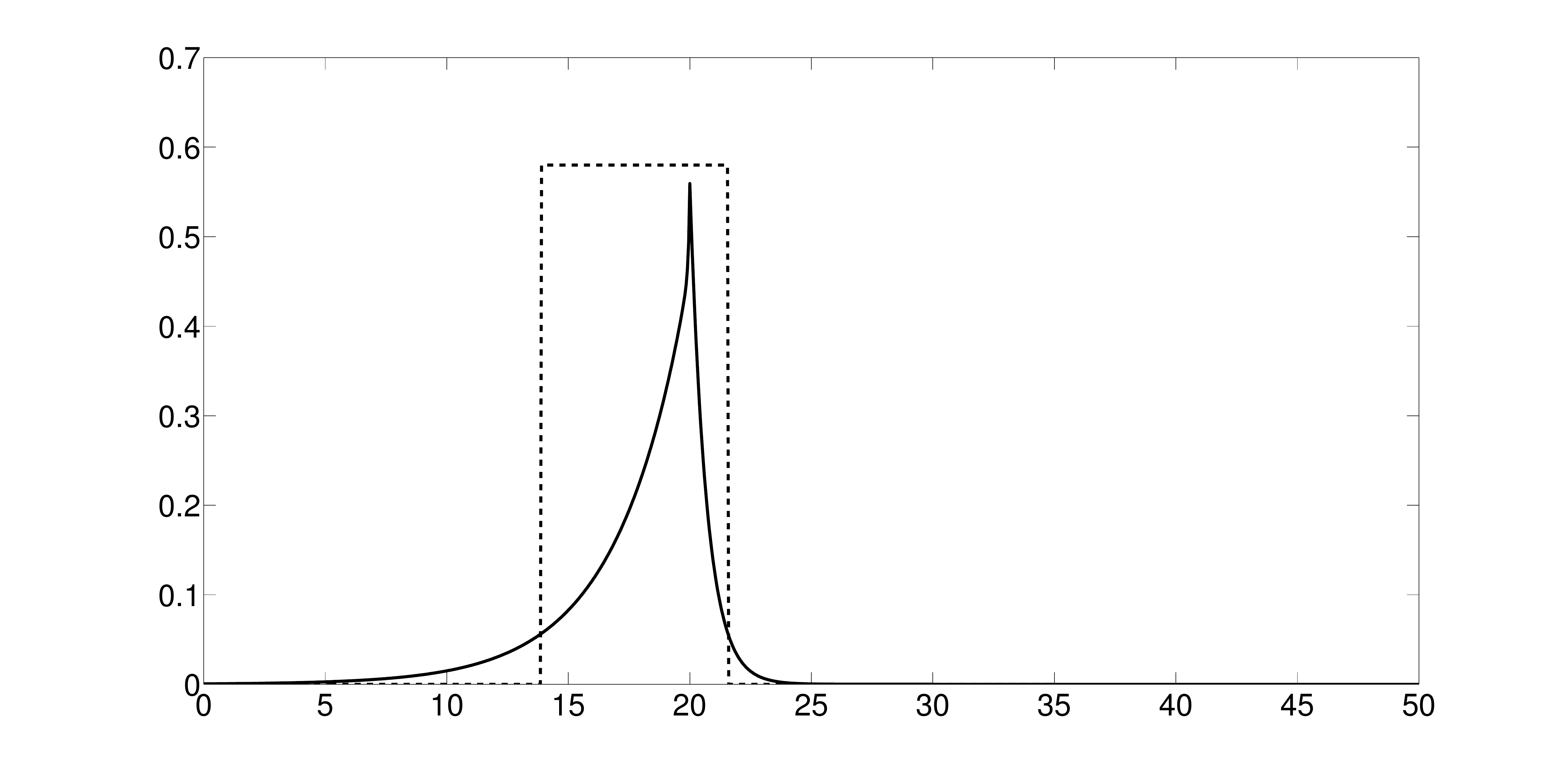}\\
      \includegraphics[scale=0.1]{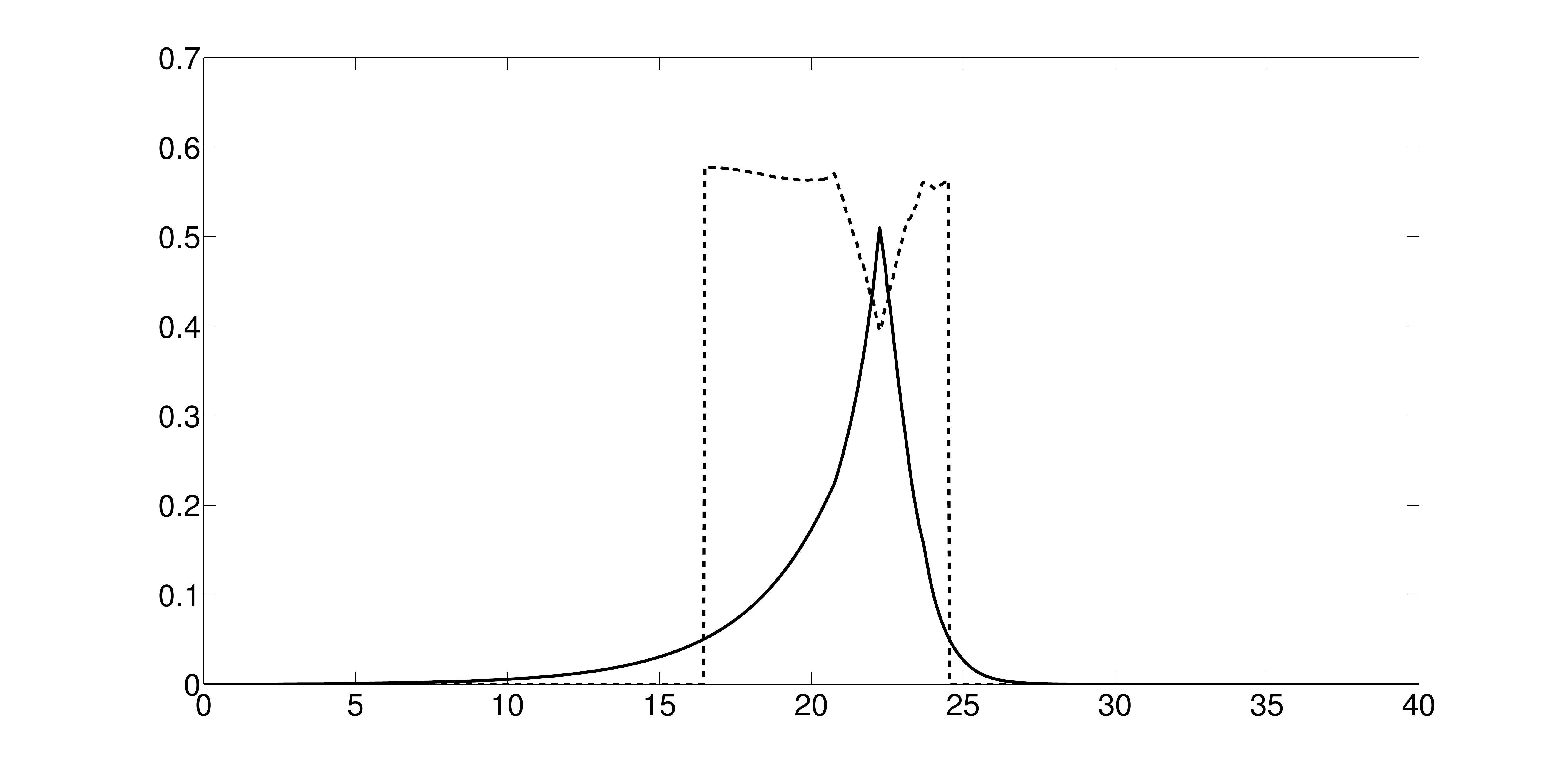} & \includegraphics[scale=0.1]{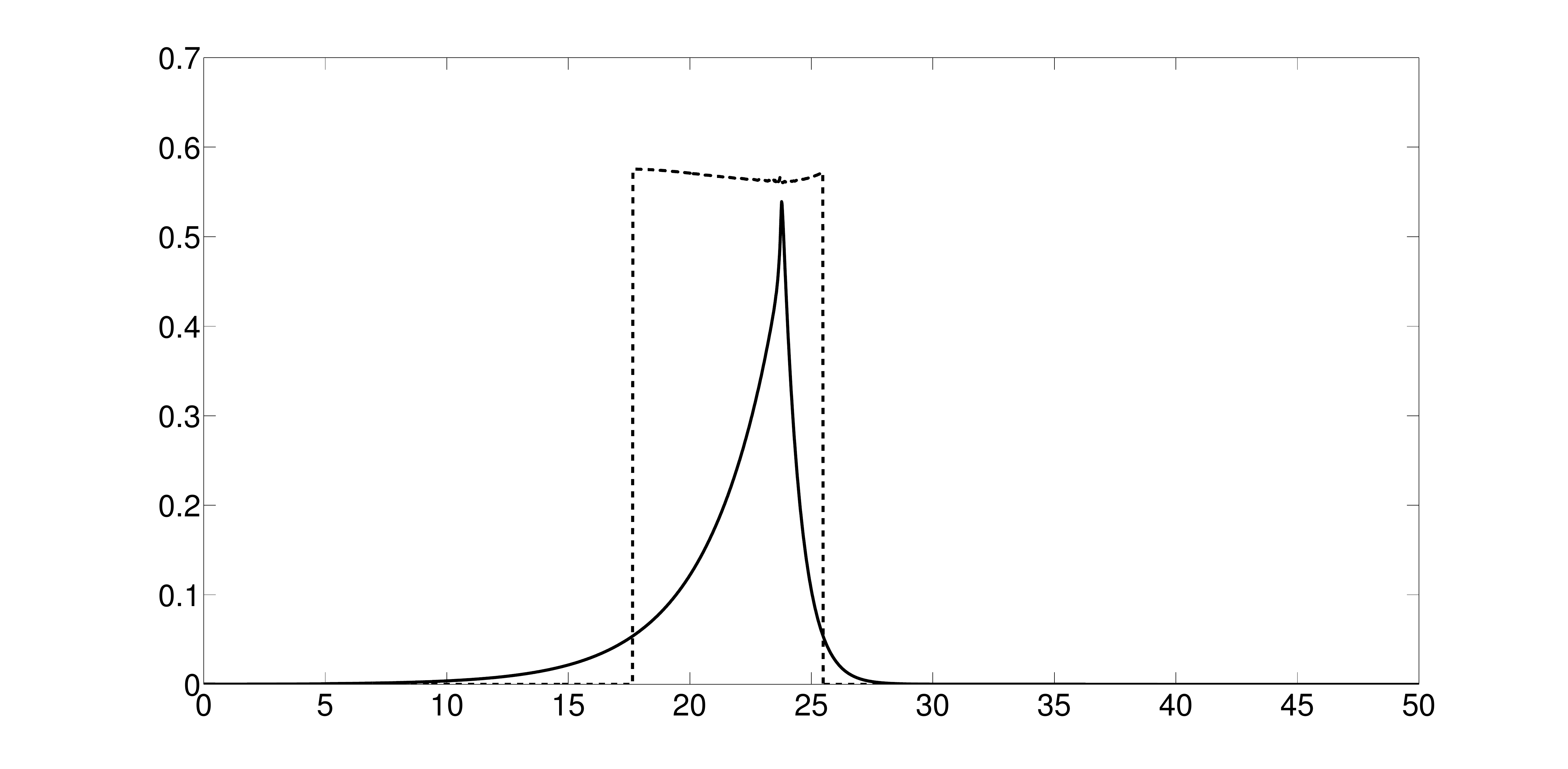}\\
      \includegraphics[scale=0.1]{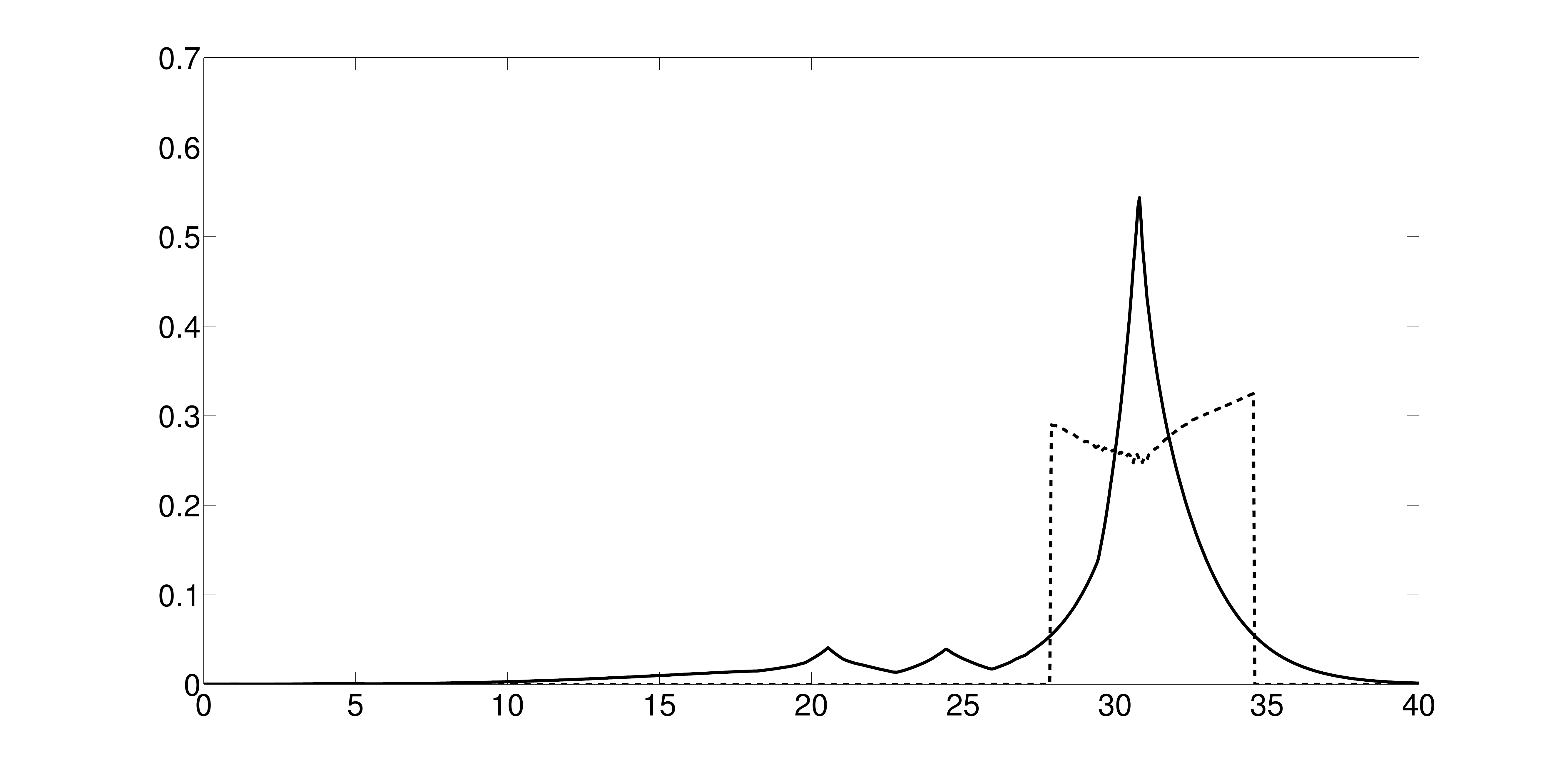}& \includegraphics[scale=0.1]{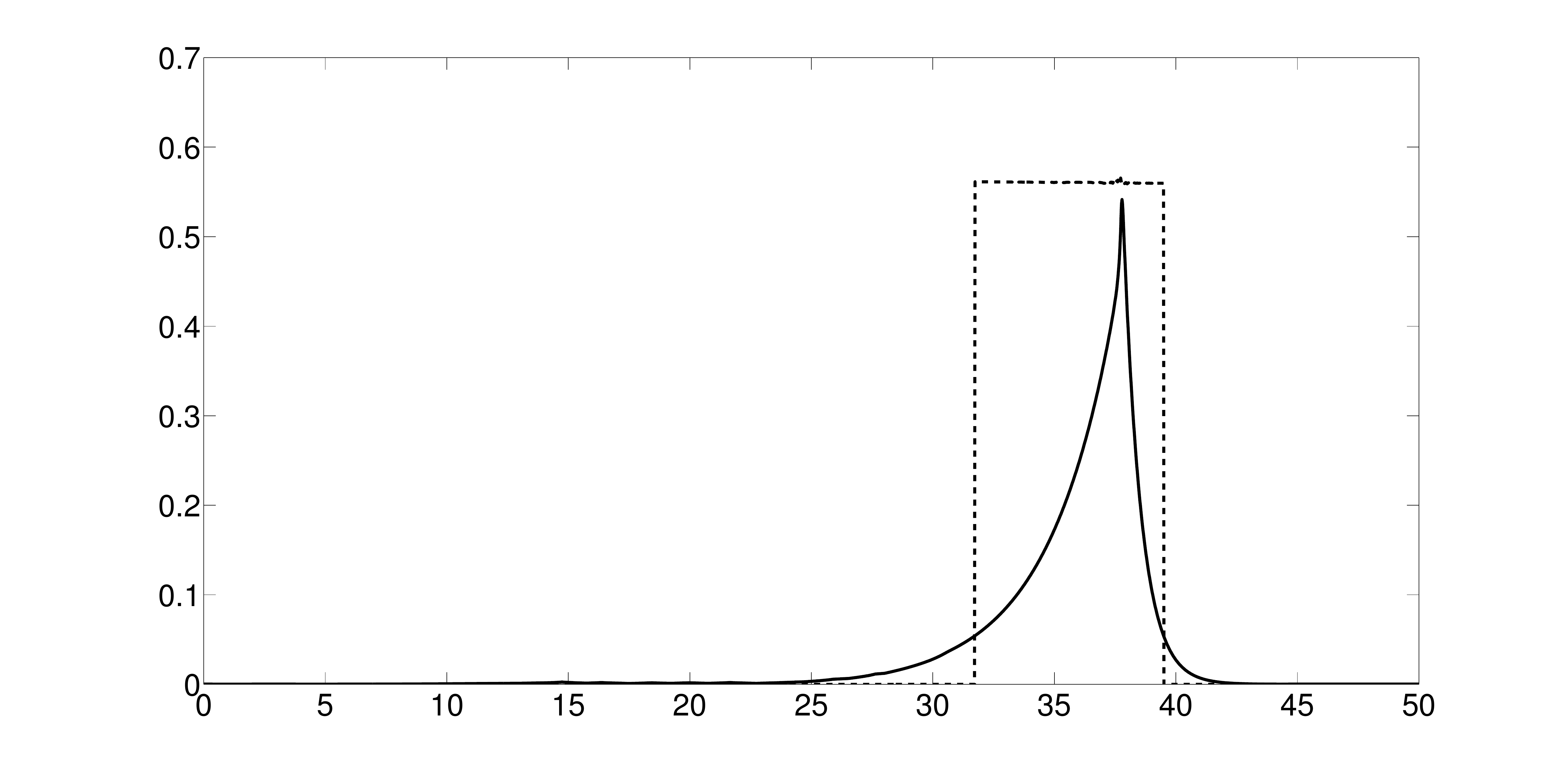}
    \end{tabular}
  \end{center}
  \caption{Effect of the mesh refinement: evolution of the macroscopic density (solid line) and the averaged velocity (dashed line), when the initial data is closed to the fast wave density profile $(c_f\approx 0.58)$. Parameters are the same as in Figure \ref{fig:gammaC_vmin05}. (Left) The space step is $\Delta x = 0.05$. Successive times are $t=0$ (top),  $t=5$ (middle), $t=30$ (bottom). (Right) The space step is  $\Delta x=0.018$. Times are the same. Notice the need for a small space step to capture the propagation of the fast wave. The slow wave is more robust for this choice of parameters.}
  \label{fig:stability_dx}
\end{figure}
%
\subsection{Observing bi-stability}

For $v_{\min}=0.5$ then the condition $\Upsilon(c)=0$ is satisfied by two values of $c$: $c_s \approx 0.214$ for the slow wave and $c_f \approx 0.58$ for the fast wave, see Figure~\ref{fig:gammaC_vmin05}. We show numerical simulations of the Cauchy problem which support the fact that the waves are both locally stable.
\begin{figure}[t]
  \begin{center}
    \begin{tabular}{cc}
      \includegraphics[scale=0.11]{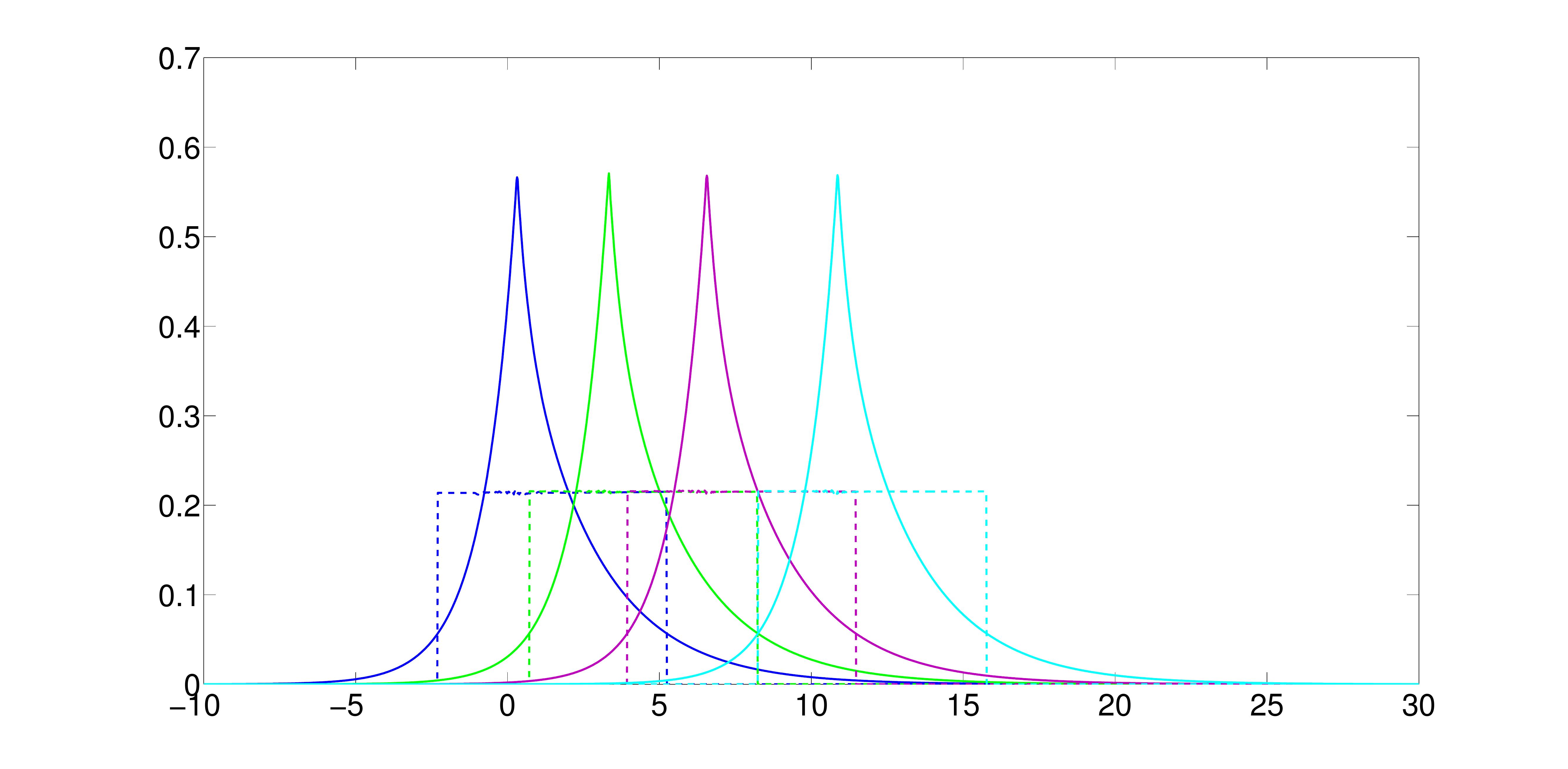}&\includegraphics[scale=0.11]{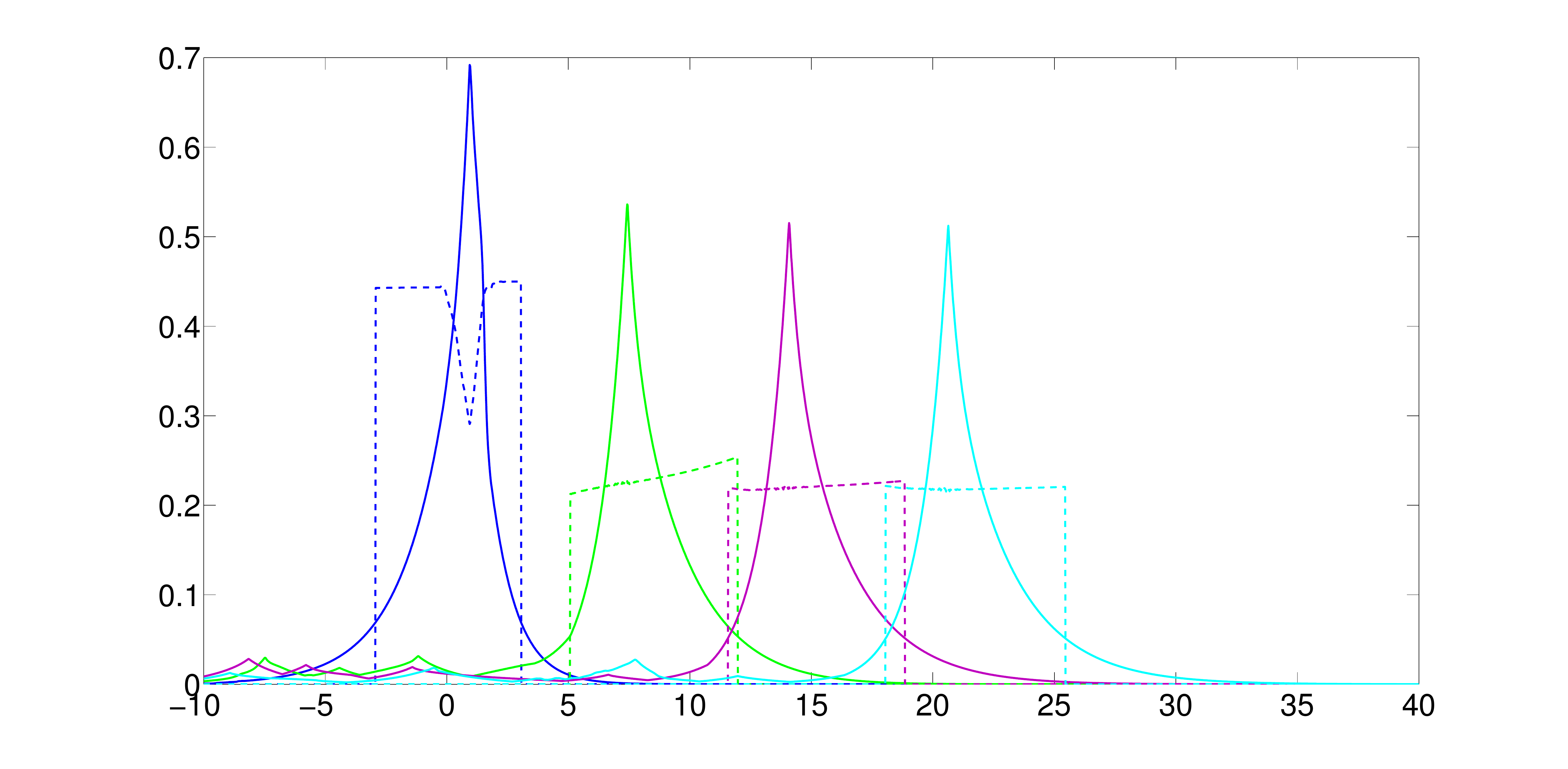}\\
      $c=0.2$&$c=0.45$\\
      \includegraphics[scale=0.11]{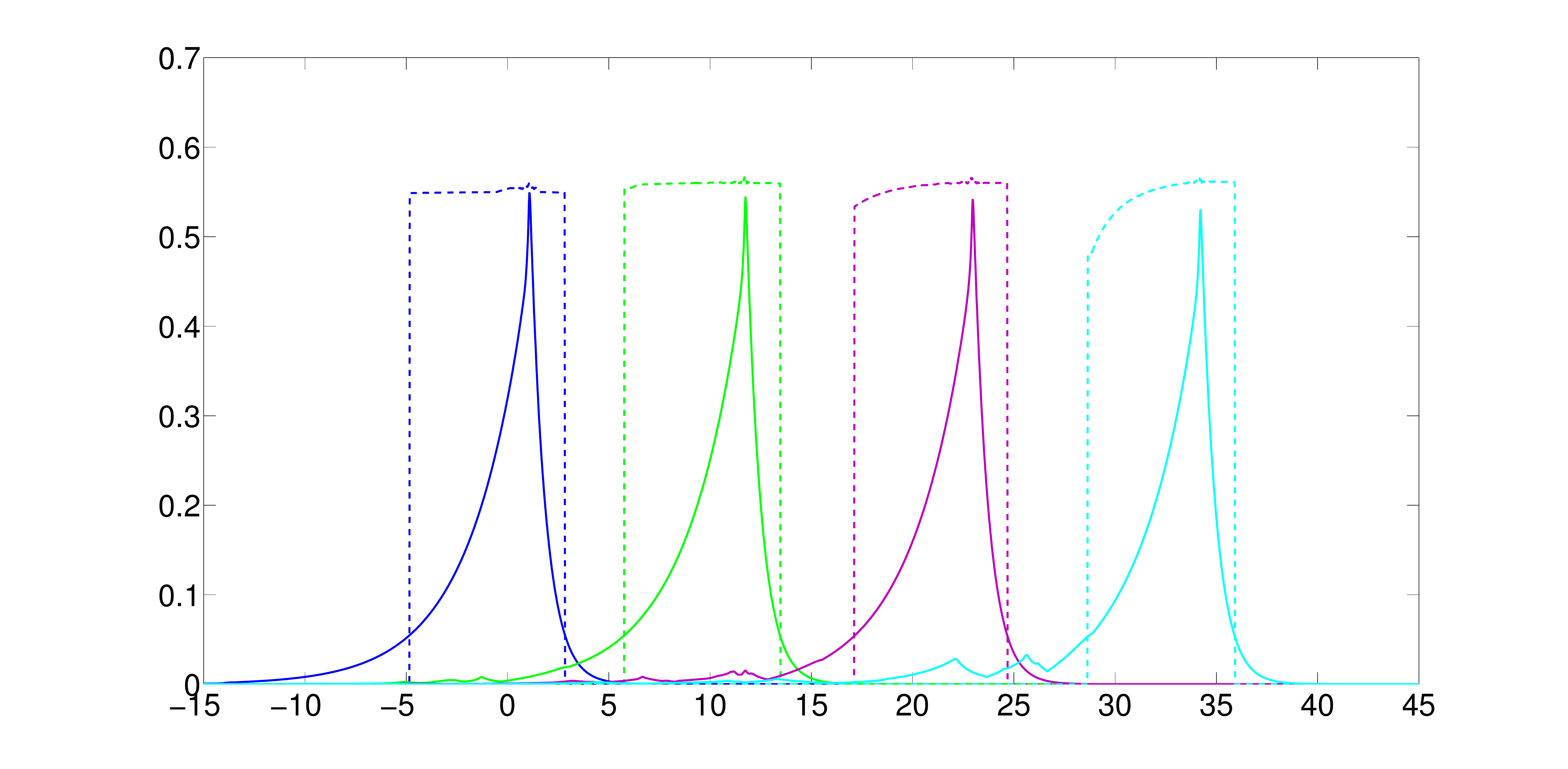}&\includegraphics[scale=0.11]{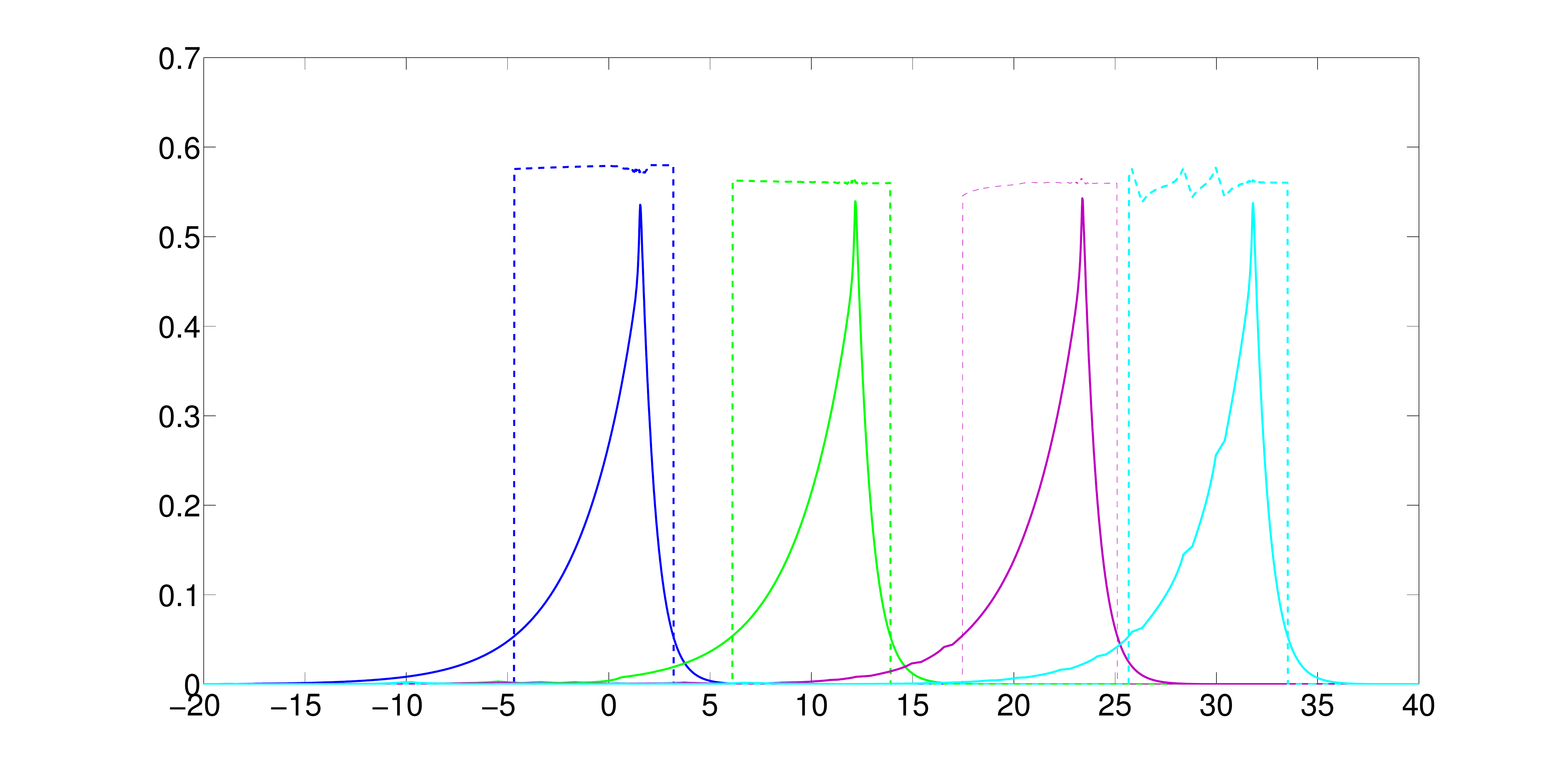}\\
      $c=0.55$&$c=0.58$
    \end{tabular}
  \end{center}
  \caption{Numerical evidence of local stability for both the slow wave and the fast wave. Parameters are the same as in Figure \ref{fig:gammaC_vmin05}. Each graph represents the space distribution of the macroscopic density (solid line) and the averaged velocity (dashed line) at different times. Each simulation is  initialized with a different profile, corresponding to the solution of the stationary uncoupled problem for parameters $c=0.214,0.45,0.55,0.58$. The first two simulations converge towards the slow wave, whereas the last two simulations converge towards the fast wave.}
  \label{fig:bistability}
\end{figure}
More precisely, we initialize the Cauchy problem with stationary solutions in shifted frame with four different speeds $c=(0.214,0.45,0.55,0.58)$ and we analyze their long time behavior. Figure~\ref{fig:bistability} presents the  distribution of the spatial density $\rho$ and the mean velocity $u$ at different times \eqref{eq:speed}. Two waves corresponding to the exact slow and fast wave are preserved. The other two initial conditions, for $c=0.45$ and $c=0.55$,  converge to each of the travelling waves, respectively. We note that smaller accuracy for faster waves is due to the low space resolution, as described in the previous section. 

\subsection{Bifurcation diagram}
%
Previously, we fixed the parameter $v_{\min}$  and we showed that various initial conditions converge asymptotically to the slow, or the fast wave given by the relation $\Upsilon(c)=0$. Now, $v_{\min}$ becomes  a free parameter in the study of local stability of admissible travelling waves. 
\begin{figure}[t]
  \begin{center}
      \includegraphics[scale=0.2]{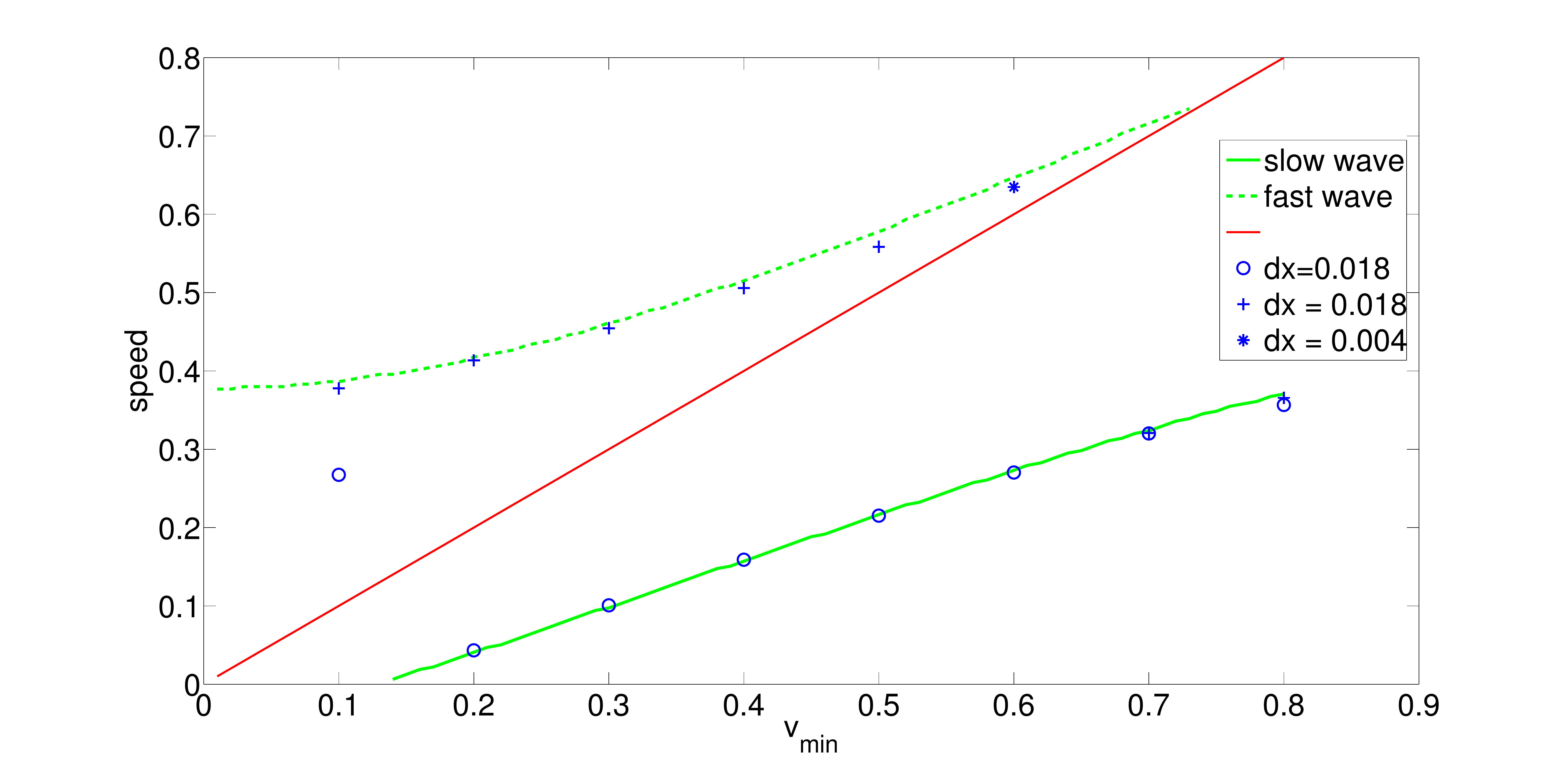}
  \end{center}
  \caption{Bifurcation diagram of the wave speed as a function of $v_{\min}$. This graph is a superposition of the branch corresponding to the slow wave (green, full line) and the branch corresponding to the fast wave (green, dashed line), obtained after a very accurate resolution of the wave speed equation $\Upsilon(c)=0$. The red line is simply the diagonal $c = v_{\min}$ for which the problem is singular. Markers correspond to the numerical values of the propagation speed  obtained using the long time simulations of the Cauchy problem~\eqref{eq:model}, with different choices of initial conditions. Parameters are the same as in Figure \ref{fig:gammaC_vmin05}. This shows the range of  $v_{\min}$ for which two waves co-exist. Interestingly, when a branch terminates, the long time simulations of the Cauchy problem jumps to the other branch in a counter-intuitive manner: by reducing the lowest velocity $v_{\min}$, the propagation speed is increased, and vice-versa.}
  \label{fig:bifurcation_diagram}
\end{figure}
The Cauchy problem is initialized close to the profile of either the slow or the fast wave, for each $v_{\min}$. A bifurcation diagram of wave speeds as a function of $v_{\min}$ is presented in Figure~\ref{fig:bifurcation_diagram}, including both exact roots of $\Upsilon(c)=0$ (for each $v_{\min}$,  green curves), and numerical ones obtained in the long time asymptotics (markers). The outcomes of this bifurcation diagram are as follows:
%
\begin{itemize}
\item Both slow and fast waves remain stable numerically when they exist. 
\item The range of minimal velocity values $v_{\min}$, for which two waves co-exist, is wide. The smaller is the minimal velocity of the grid, the slower are the waves. Our numerical scheme captures reliably all these waves.
\item Behavior for extreme values of $v_{\min}$ is counterintuitive, but in agreement with theoretical analysis. If $v_{\min}$ is small (resp. large) enough than the slow (resp. fast)  wave disappears and all solutions stabilize on the fastest (resp. slowest)  wave. 
\item Numerical results for slow waves are in a very good agreement with theoretical curves. For $v_{\min}=0.1$ the initial wave corresponds to $c=0$ and convergence toward the fast wave is not well resolved. It might come from a too long simulation time along with numerical diffusion, which slows  velocity down. 
\item  Approximation of fast waves is quite challenging for any numerical process: as already explained, faster waves are much less aggregated in the large, and much narrow close to the peak. So, the scheme needs to balance both the transport and tumbling terms on larger domains, as well as to capture small spatial scales around the peak of the wave. Up to $v_{\min}=0.5$, our scheme manages to capture  travelling waves  with $\Delta x=0.018$. At $v_{\min}=0.6$, stability is ensured for a smaller grid size.
\end{itemize}

\section{Conclusion and outlook}

Chemotactic exponential travelling profiles were studied, both theoretically (by means of Theorem \ref{theo:kin TW} and its sketch of proof) and numerically (see Sections 3--5); in particular, unexpected bi-stability phenomena were observed, for which the accuracy of recent well-balanced kinetic and parabolic discretizations was severely tested. Overall results are satisfying, mainly because both fast and slow travelling waves, in the cases where they coexist, were captured in a stable way; however, fast waves may require a finer griding of the computational domain (see Fig. \ref{fig:bistability}). The practical bifurcation diagram agrees nicely with theoretical values, see Fig. \ref{fig:bifurcation_diagram}.

This being said, it sounds desirable to improve the global numerical strategy by getting rid of the ``splitting'' between the kinetic equation (\ref{kinetic}) and diffusion ones (\ref{para-S-N}). In a way similar to a 1D Riemann solver for a system of nonlinear conservation laws, a numerical handling of (\ref{kinetic})--(\ref{para-S-N}) as a whole set of equations is likely to bring more robustness and alleviate the griding constraints. Two angles of attack can be tried for building such a solver:
\begin{itemize}
\item a direct coupling strategy between the already existing $S$-matrix derivations and Steklov-Poincar\'e strategies presented in \cite{Gosse_Lsplines};
\item or building two-stream (diffusive) relaxation approximations of (\ref{para-S-N}) and consider an ``augmented kinetic model'' which encompasses the resulting three kinetic equations, and for which an ``augmented $S$-matrix'' might be found.
\end{itemize}
In a context of entropy-dissipating PDE's, the use of both $S$-matrices and $\mathcal L$-splines within numerical schemes allows to retrieve very high order accuracy close to steady-state, while maintaining the stencil as narrow as possible. The reason is that dissipation of entropy yields loss of information, hence irreversibility, so that distinguished ``equilibrium states'' do exist. For large-time simulations, it appears therefore sufficient to secure high accuracy only in the vicinity of such (problem-dependent) states, instead of asking for the same accuracy for a very wide class of (smooth) functions, which can actually be solutions of the considered problem only for a limited duration, at best, at the price of a more involved algebraic complexity of the algorithms.
\section*{Acknowledgments}
M.T. has benefited from the PICS Project CNR-CNRS 2015-2017 {\em Mod\`eles math\'ematiques et simulations num\'eriques pour le mouvement de cellules}.
This project received funding from the European Research Council (ERC) under the European Union's Horizon 2020 research and innovation programme (grant agreement No 639638).

\bibliography{twbib_NEW}
\bibliographystyle{siam}

\end{document}